\def\R{\mathbb{R}}
\def\N{\mathbb{N}}
\def\Z{\mathbb{Z}}
\def\C{\mathbb{C}}
\def\H{\mathbb{H}}
\def\A{\mathcal{A}}
\def\supp{{\rm supp}}
\def\H{\mathcal{H}}
\renewcommand{\d}{\text{\rm d}}
\def\sgn {{\rm sgn}}
\newcommand{\mc}{\mathcal}
\newtheorem{theorem}{Theorem}
\newtheorem{corollary}[theorem]{Corollary}
\newtheorem*{definition*}{Definition}
\newtheorem{proposition}[theorem]{Proposition}
\newtheorem{lemma}[theorem]{Lemma}
\DeclareFontFamily{U}{tipa}{}
\DeclareFontShape{U}{tipa}{m}{n}{<->tipa10}{}
\newcommand{\arc@char}{{\usefont{U}{tipa}{m}{n}\symbol{62}}}%
\numberwithin{equation}{section}
\newcommand{\intav}[1]{\mathchoice {\mathop{\vrule width 6pt height 3 pt depth  -2.5pt
\kern -8pt \intop}\nolimits_{\kern -6pt#1}} {\mathop{\vrule width
5pt height 3  pt depth -2.6pt \kern -6pt \intop}\nolimits_{#1}}
{\mathop{\vrule width 5pt height 3 pt depth -2.6pt \kern -6pt
\intop}\nolimits_{#1}} {\mathop{\vrule width 5pt height 3 pt depth
-2.6pt \kern -6pt \intop}\nolimits_{#1}}}
\newcommand{\intavl}[1]{\mathchoice {\mathop{\vrule width 6pt height 3 pt depth  -2.5pt
\kern -8pt \intop}\limits_{\kern -6pt#1}} {\mathop{\vrule width 5pt
height 3  pt depth -2.6pt \kern -6pt \intop}\nolimits_{#1}}
{\mathop{\vrule width 5pt height 3 pt depth -2.6pt \kern -6pt
\intop}\nolimits_{#1}} {\mathop{\vrule width 5pt height 3 pt depth
-2.6pt \kern -6pt \intop}\nolimits_{#1}}}
  \newcommand{\z}{{\bm z}}
    \newcommand{\x}{{\bm x}}
      \newcommand{\y}{{\bm y}}
        \renewcommand{\t}{{\bm t}}
    \newcommand{\xxi}{{\bm \xi}}   
    \renewcommand{\j}{{\frak j}}   
 \newcommand{\ov}{\overline}
\title[Sharp embeddings between weighted Paley-Wiener spaces]{Sharp embeddings between weighted Paley-Wiener spaces}
\author[Carneiro]{Emanuel Carneiro}
\author[Gonz\'{a}lez-Riquelme]{Cristian Gonz\'{a}lez-Riquelme}
\author[Oliveira]{Lucas Oliveira}
\author[Olivo]{Andrea Olivo}
\author[Ombrosi]{Sheldy Ombrosi}
\author[Ramos]{Antonio Pedro Ramos}
\author[Sousa]{Mateus Sousa}
\address[Emanuel Carneiro]{
ICTP - The Abdus Salam International Centre for Theoretical Physics, 
Strada Costiera, 11, I - 34151, Trieste, Italy.}
\email{carneiro@ictp.it}
\address[Cristian Gonz\'{a}lez-Riquelme]{Centre de Recerca Matem\`{a}tica, Campus de Bellaterra, Edifici C 08193 Bellaterra (Barcelona),
Spain.}
\email{cgonzalez@crm.cat}
\address[Lucas Oliveira]{Departamento de Matem\'{a}tica Pura e Aplicada, Universidade Federal do Rio Grande do Sul, Porto Alegre, RS, Brazil 91509-900.}
\email{lucas.oliveira@ufrgs.br}
\address[Andrea Olivo]{Departamento de Matem\'{a}tica, Facultad de Ciencias Exactas y Naturales, Universidad de Buenos Aires and IMAS-CONICET, Pabell\'{o}n I (C1428EGA), Ciudad de Buenos Aires, Argentina.}
\email{aolivo@dm.uba.ar}
\address[Sheldy Ombrosi]{Departamento de Matemática e Instituto de Matemática
de Bahía Blanca. Universidad Nacional del Sur - CONICET Bahía Blanca, Argentina.}
\email{sombrosi@uns.edu.ar}
\address[Antonio Pedro Ramos]{IMPA - Instituto de Matem\'atica Pura e Aplicada, Rio de Janeiro - RJ, Brazil, 22460-320.}
\email{antonio.ramos@impa.br}
\address[Mateus Sousa]{BCAM - Basque Center for Applied Mathematics, Alameda de Mazarredo 14, 48009 Bilbao, Bizkaia, Spain.}
\email{mcosta@bcamath.org}
\date{\today}                                           
\begin{document}

\subjclass[2010]{46E22, 42A05, 42B35, 30H45, 33C10}
\keywords{Paley-Wiener spaces, Fourier uncertainty, de Branges spaces, Poincar\'{e} inequalities, Bessel functions.}
\begin{abstract} In this paper we address the problem of estimating the operator norm of the embeddings between multidimensional weighted Paley-Wiener spaces. These can be equivalently thought as Fourier uncertainty principles for bandlimited functions. By means of radial symmetrization mechanisms, we show that such problems can all be shifted to dimension one. We provide precise asymptotics in the general case and, in some particular situations, we are able to identify the sharp constants and characterize the extremizers. The sharp constant study is actually a consequence of a more general result we prove in the setup of de Branges spaces of entire functions, addressing the operator given by multiplication by $z^k$, $k \in \N$. Applications to sharp higher order Poincar\'{e} inequalities and other related extremal problems are discussed.

\end{abstract}

\maketitle 
\setcounter{tocdepth}{1}
\tableofcontents

\section{Introduction}

\subsection{Prelude} This paper touches on a few themes within analysis and approximation theory. Our normalization for the Fourier transform of a function $F\in L^1(\R^d)$ is  
\begin{equation*}
\widehat{F}(\xxi)=\int_{\R^d}e^{-2\pi i \x \cdot \xxi}\, F(\x)\,\d \x.
\end{equation*}
As a sample of our study, we start by presenting the following sharp inequality for bandlimited functions (i.e. functions with compactly supported Fourier transforms), which is a particular case of our more general results in Theorems \ref{Teoremacco_versao_dB_Intro} and \ref{Thmaco_class} below.
\begin{corollary}\label{Cor_sharp_1}
Let $f \in L^2(\R)$ with $\supp\big(\widehat{f}\,\big) \subset [-\frac12, \frac12]$. Then
\begin{align}\label{20230218_17:49}
\int_{-\infty}^{\infty} |f(x)|^2 \,\d x \leq \int_{-\infty}^{\infty} |f(x)|^2 \,  x^6\, \d x.
\end{align} 
This inequality is sharp and the unique extremizer (up to multiplication by a complex constant) is 
\begin{align*}
f(x) =   
\frac{(x^2-1)\coth\big(\tfrac{\pi \sqrt{3}}{2}\big) \cos \pi x  - \sqrt{3}\,x \sin \pi x}{x^6-1}.
\end{align*}
\end{corollary}

We are inspired here by the classical work of Holt and Vaaler \cite{HV}, that approaches certain extremal problems in harmonic and complex analysis via the powerful theory of de Branges spaces of entire functions \cite{Branges}.  Our aim is to study certain sharp inequalities for bandlimited functions that fall under the paradigm of {\it Fourier uncertainty}, i.e. that one cannot have an unrestricted control of a function and its Fourier transform simultaneously. Different manifestations of Fourier uncertainty are ubiquitous in harmonic analysis; for a brief historical account and many important examples we refer the reader to the survey articles \cite{{BDsurvey}, FSsurvey} and the references therein. Other recent works that apply de Branges spaces techniques in connection to extremal problems in analysis include \cite{BCOS, CCLM, CarChiMil, CarGon, CL1, CL2, L, LS1, LS2, V2}. 

\smallskip

As we shall see, some of the inequalities discussed in this paper can be equivalently formulated, via the Fourier transform, in terms of classical derivatives (i.e. {\it Poincaré inequalities}). When such a reformulation is available, our approach via de Branges spaces provides an alternative framework to look into the problem of finding sharp constants, in contrast to the classical Sturm-Liouville approach on the Fourier side; more details on \S \ref{Shapr_poincare_section} below. 

\smallskip

The main results in this paper are the ones described in this introduction together with the general Theorems \ref{Teoremacco_versao_dB} and \ref{Thmaco_class}, presented in Section \ref{Sec8_mult_z}.

\subsection{Setup} \label{Sec_Setup} We start with our general setup, in the language of entire functions of several complex variables. Vectors in $\R^d$ or $\C^d$ are denoted here with bold font (e.g. $\x$, $\y$, $\z$) and numbers in $\R$ or $\C$ with regular font (e.g. $x,y,z$). For $\z = (z_1, z_2, \ldots, z_d) \in \C^d$ we let $|\cdot|$ be the usual norm $|\z| := (|z_1|^2 + \ldots + |z_d|^2)^{1/2}$, and define a second norm $\|\cdot\|$ by 
$$\|\z\| := \sup \left\{ \left|\sum_{n=1}^d z_n\,t_n\right|; \ \t \in \R^d \ {\rm and} \ |\t|\leq 1\right\}.$$
If $F: \C^d \to \C$ is an entire function of $d$ complex variables, which is not identically zero, we say that $F$ has {\it exponential type} if
$$\tau(F):= \limsup_{\|\z\|\to \infty} \|\z\|^{-1}\,\log|F(\z)| < \infty.$$
In this case, the number $\tau(F)$ is called the exponential type of $F$. When $d=1$ this is the classical definition of exponential type and, when $d \geq 2$, our definition is a particular case of a more general concept of exponential type with respect to a compact, convex and symmetric set $K \subset \R^d$ (cf. \cite[pp. 111-112]{SW}). In our case, this convex set $K$ is simply the unit Euclidean ball.

\smallskip

For each $ \alpha > -1$ and $\delta >0$, we let $\mc{H}_{\alpha}(d\,; \delta)$ be the Hilbert space of entire functions $F: \C^d \to \C$ of exponential type at most $\delta$ such that 
\begin{equation}\label{20221121_11:09}
\|F\|_{ \mc{H}_{\alpha}(d; \delta)}:= \left(\int_{\R^d} |F(\x)|^2 \, |\x|^{2\alpha + 2 -d}\,\d\x\right)^{1/2} <\infty.
\end{equation}
For generic $\alpha \geq \beta > -1$ observe that one has the inclusion $\mc{H}_{\alpha}(d\,; \delta) \subset \mc{H}_{\beta}(d\,; \delta)$, and from the closed graph theorem it follows that the inclusion map $I: \mc{H}_{\alpha}(d\,; \delta) \to \mc{H}_{\beta}(d\,; \delta)$ is a bounded operator. The purpose of this paper is to investigate the operator norm $\|I\|_{\mc{H}_{\alpha}(d\,; \delta) \to \mc{H}_{\beta}(d\,; \delta)}$ of this embedding, 
which can be equivalently thought of as the quest to find the sharp form of a Fourier uncertainty principle. In our study, it will be convenient to consider the following formulation: 

\smallskip

\noindent {\it Extremal Problem 1} (EP1): For $\alpha \geq \beta > -1$ and $\delta >0$ real parameters, and $d \in \N$, find the value of
\begin{equation}\label{20220815_17:17_1}
(\mathbb{EP}1)(\alpha, \beta\,; d\,; \delta):= \inf_{ 0 \neq F \in  \mc{H}_{\alpha}(d; \delta) } \frac{\int_{\R^d} |F(\x)|^2 \, |\x|^{2\alpha + 2 -d}\,\d\x}{\int_{\R^d} |F(\x)|^2 \, |\x|^{2\beta + 2 -d}\,\d\x}.
\end{equation}
This is the first of four extremal problems that will appear in this paper, and our choice of notation for the sharp constant aims to facilitate such references. In particular, note that $(\mathbb{EP}1)(\alpha, \beta\,; d\,; \delta) = \big(\|I\|_{\mc{H}_{\alpha}(d\,; \delta) \to \mc{H}_{\beta}(d\,; \delta)}\big)^{-2}$ and hence
\begin{equation}\label{20220816_14:41_1}
(\mathbb{EP}1)(\alpha, \beta\,; d\,; \delta) >0.
\end{equation} 
Inequality \eqref{20220816_14:41_1} can be viewed as a Fourier uncertainty principle. In fact, the Paley-Wiener theorem (see \cite[Chapter III, Section 4]{SW} and \cite[Theorem 1.7.5 and Theorem 1.7.7]{Hor}) tells us that $F \in \mc{H}_{\alpha}(d\,; \delta)$ has (distributional) Fourier transform supported in the closed ball of radius $\delta/2\pi$ centered at the origin and, therefore, the Fourier uncertainty paradigm implies that the mass of $F$ cannot be too concentrated around the origin. 

\smallskip

We now move to describing our main results. We split them into three distinct parts.

\subsection{Main results}

\subsubsection{Part I: Qualitative properties} \label{Part I: symmetries and extremizers} Observe that $F(\cdot) \in \mc{H}_{\alpha}(d\,; \delta)$ if and only if $F(\cdot/ \delta)\in \mc{H}_{\alpha}(d\,; 1)$. This change of variables plainly yields the relation
\begin{equation}\label{20220816_15:18_1}
(\mathbb{EP}1)(\alpha, \beta\,; d\,; \delta) = \delta^{2 \beta - 2\alpha}\, (\mathbb{EP}1)(\alpha, \beta\, ; d\, ;1).
\end{equation}
Our first non-trivial observation about the extremal problem (EP1) is as follows.
\begin{theorem} [Dimension shifts]\label{Thm_dimension_shifts} We have 
\begin{equation}\label{20220816_15:19_1}
(\mathbb{EP}1)(\alpha, \beta\, ; d\,; \delta) = (\mathbb{EP}1)(\alpha, \beta\,; 1\,;\delta).
\end{equation}
\end{theorem}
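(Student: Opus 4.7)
The plan is to prove the two one-sided inequalities separately, transferring test functions between dimensions by means of the classical identification of radial entire functions of $d$ complex variables of exponential type $\delta$ with even entire functions of one complex variable of the same type: such a radial $F\colon\C^d\to\C$ has the form $F(\z)=G(z_1^2+\ldots+z_d^2)$ for some entire $G$ of order $\le 1/2$ and type $\le \delta$, and, conversely, this formula produces an $F$ of exponential type $\delta$ with respect to the ball thanks to the inequality $|z_1^2+\ldots+z_d^2|\le \|\z\|^2$. This last bound follows from identifying $\|\z\|^2$ with the top eigenvalue of the $2\times 2$ Gram matrix of $\mathrm{Re}\,\z,\mathrm{Im}\,\z$ and comparing with $|\sum_j z_j^2|^2$ via Cauchy--Schwarz.

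For the upper bound $(\mathbb{EP}1)(\alpha,\beta;d;\delta)\le (\mathbb{EP}1)(\alpha,\beta;1;\delta)$, given $0\neq f\in\mc{H}_\alpha(1;\delta)$ I split $f=f_e+f_o$ into its even and odd parts (both entire of type $\delta$), write $f_e(t)=G_e(t^2)$ and $f_o(t)=t\,G_o(t^2)$, and produce the two $d$-dimensional candidates
\[
F_e(\z):=G_e(z_1^2+\ldots+z_d^2)\qquad\text{and}\qquad F_o(\z):=z_1\,G_o(z_1^2+\ldots+z_d^2).
\]
A polar-coordinate computation (using $\int_{S^{d-1}}\omega_1^2\,\d\sigma(\omega)=\omega_{d-1}/d$ for the linear factor in $F_o$) gives the exact identities
\[
\frac{\|F_e\|^2_{\mc{H}_\alpha(d;\delta)}}{\|F_e\|^2_{\mc{H}_\beta(d;\delta)}}=\frac{\|f_e\|^2_{\mc{H}_\alpha(1;\delta)}}{\|f_e\|^2_{\mc{H}_\beta(1;\delta)}},\qquad \frac{\|F_o\|^2_{\mc{H}_\alpha(d;\delta)}}{\|F_o\|^2_{\mc{H}_\beta(d;\delta)}}=\frac{\|f_o\|^2_{\mc{H}_\alpha(1;\delta)}}{\|f_o\|^2_{\mc{H}_\beta(1;\delta)}}.
\]
Since the weight $|t|^{2\alpha+1}$ is even, $\|f\|^2_\alpha=\|f_e\|^2_\alpha+\|f_o\|^2_\alpha$ (and likewise for $\beta$), so the 1D ratio of $f$ is a convex combination of the even and odd ratios. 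Picking whichever of $F_e,F_o$ realizes the smaller of the two produces a $d$-dimensional competitor whose ratio is bounded above by the 1D ratio of $f$, and the upper bound follows by taking the infimum over $f$.

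For the lower bound, I apply the spherical harmonic decomposition of entire functions of exponential type (see Stein--Weiss): any $F\in\mc{H}_\alpha(d;\delta)$ can be written as
\[
F(\z)=\sum_{k\ge 0}\sum_{j=1}^{N(k,d)} Y_{k,j}(\z)\,g_{k,j}(z_1^2+\ldots+z_d^2),
\]
with $\{Y_{k,j}\}_j$ an orthonormal system of solid harmonic polynomials of degree $k$ on $S^{d-1}$ and each $g_{k,j}$ entire of order $\le 1/2$ and type $\le \delta$. Orthogonality of spherical harmonics on $S^{d-1}$ gives $\|F\|^2_\alpha=\sum_{k,j}\|F_{k,j}\|^2_\alpha$ and the analogous identity for $\beta$, so the ratio of $F$ dominates the minimum of the component ratios. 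For each $(k,j)$ the 1D companion $h_{k,j}(t):=t^k g_{k,j}(t^2)$ is entire of type $\delta$, and the same polar computation shows that the 1D ratio of $h_{k,j}$ equals the $d$-dimensional ratio of $F_{k,j}$; since each such ratio is at least $(\mathbb{EP}1)(\alpha,\beta;1;\delta)$, this closes the argument.

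The step I expect to demand the most care is the structural part of this spherical harmonic decomposition---namely, verifying that each projection $F_{k,j}$ is again entire of type $\delta$ (which follows by writing $F_{k,j}$ as a finite-measure average of $F\circ R$ over rotations $R\in O(d)$ against a suitable class function, using that rotations are isometries of the unit ball) and that the radial factor $g_{k,j}(\cdot)$ is in fact entire of order $\le 1/2$ and type $\le \delta$ (for which one again invokes the inequality $|z_1^2+\ldots+z_d^2|\le \|\z\|^2$ in reverse, to read off the growth of $g_{k,j}$ from that of $F_{k,j}$ along the line $\z=t\omega_0$ for a fixed $\omega_0\in S^{d-1}$ with $Y_{k,j}(\omega_0)\neq 0$).
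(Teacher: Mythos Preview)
Your argument is essentially correct and takes a route quite different from the paper's. The paper does \emph{not} decompose $F$ into spherical harmonics. Instead it introduces an auxiliary one-sided problem (EP3), in which $|F|^2$ is replaced by an arbitrary nonnegative entire function $M$ of type $2\delta$. The chain of inequalities is then: $(\mathbb{EP}1)(d)\ge(\mathbb{EP}3)(d)$ trivially via $M=FF^*$; $(\mathbb{EP}3)(d)=(\mathbb{EP}3)(1)$ by radially symmetrizing $M$ (here Fubini gives equality of the two integrals, with no loss); $(\mathbb{EP}3)(1)=(\mathbb{EP}1)(1)$ by Krein's factorization of nonnegative functions of exponential type; and finally $(\mathbb{EP}1)(1)\ge(\mathbb{EP}1)(d)$ by lifting an even \emph{extremizer} (whose existence is proved separately). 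So the paper's lower bound hinges on Krein's theorem, while the upper bound relies on the existence of extremizers.

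Your proof is more elementary on both sides. For the upper bound, your even/odd splitting and the mediant inequality neatly avoid any appeal to the existence of extremizers. For the lower bound, your spherical-harmonic decomposition replaces Krein's factorization; the key point is simply that the one-variable companion
\[
h_{k,j}(t)=\int_{S^{d-1}} F(t\omega)\,\overline{Y_{k,j}(\omega)}\,\d\sigma(\omega)
\]
is manifestly entire of type at most $\delta$ (since $\|t\omega\|=|t|$), has parity $(-1)^k$, and satisfies $\|F\|_\alpha^2=\tfrac12\sum_{k,j}\|h_{k,j}\|_\alpha^2$ by Parseval on $S^{d-1}$ plus Tonelli. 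What the paper's detour through (EP3) buys is the side result that the squared problem and the nonnegative-$M$ problem have the same value, which is used later in the application to the radial nonincreasing delta-majorant problem.

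One small correction to your sketch: writing the projection onto a \emph{single} $Y_{k,j}$ as an average of $F\circ R$ against a class function on $O(d)$ does not quite work, since that operation is $O(d)$-equivariant and can only see the full $\mathcal{H}_k$-isotypic piece. But you do not need $F_{k,j}$ to be entire on $\C^d$ at all; the integral formula for $h_{k,j}$ above already gives the 1D companion directly, and the $L^2$ orthogonality on $\R^d$ is all that is required for the norm decomposition.
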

The proof of Theorem \ref{Thm_dimension_shifts} is carried out in Section \ref{Sec_proof_Thm1_Dim_shifts} and it relies on suitable radial symmetrization mechanisms and an auxiliary extremal problem that may be of independent interest. In view of \eqref{20220816_15:18_1} and \eqref{20220816_15:19_1}, when it comes to the extremal problem (EP1), we can therefore restrict our attention to dimension $d=1$ with any particular exponential type $\delta$ of our choice. 

\smallskip

A function $0 \neq F \in  \mc{H}_{\alpha}(d\,; \delta)$ is said to be an {\it extremizer} for $(\mathbb{EP}1)(\alpha, \beta\,; d\,; \delta)$ if it realizes the infimum in \eqref{20220815_17:17_1}, i.e. if
$$\int_{\R^d} |F(\x)|^2 \, |\x|^{2\alpha + 2 -d}\,\d\x = (\mathbb{EP}1)(\alpha, \beta\,; d\,; \delta)\int_{\R^d} |F(\x)|^2 \, |\x|^{2\beta + 2 -d}\,\d\x.$$ 
We say that an entire function $F:\C^d \to \C$ is radial, if its restriction to $\R^d$ is radial. Our next result addresses the existence of extremizers.
\begin{theorem}[Radial extremizers] \label{Thm2}
There exists a radial extremizer for $(\mathbb{EP}1)(\alpha, \beta\,; d\,; \delta)$.
\end{theorem}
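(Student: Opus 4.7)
The plan is to reduce the existence question to a one-dimensional problem via the bijection between radial entire functions of type $\delta$ on $\C^d$ and even entire functions of type $\delta$ on $\C$, and there produce an extremizer by a compactness argument.

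Any radial entire $F:\C^d\to\C$ has the form $F(\z)=\phi(z_1^2+\cdots+z_d^2)$ for some entire $\phi$ of one variable. The identity $\|\z\|^2=\tfrac{1}{2}(|\z|^2+|z_1^2+\cdots+z_d^2|)$, easily derived from the formula for $\|\cdot\|^2$ as the top eigenvalue of the Gram matrix of $(\mathrm{Re}\,\z,\mathrm{Im}\,\z)$, shows that $F$ has exponential type $\leq\delta$ with respect to $\|\cdot\|$ if and only if its profile $\tilde F(z):=F(z,0,\ldots,0)=\phi(z^2)$ is even entire of exponential type $\leq\delta$ on $\C$. Polar coordinates yield
\begin{equation*}
\|F\|_{\mathcal{H}_\alpha(d;\delta)}^2 \;=\; \tfrac{\omega_{d-1}}{2}\,\|\tilde F\|_{\mathcal{H}_\alpha(1;\delta)}^2
\end{equation*}
and analogously for $\beta$, so the Rayleigh quotient $\Phi$ is preserved under $F\leftrightarrow\tilde F$. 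Thus radial extremizers for $(\mathbb{EP}1)(\alpha,\beta\,;d\,;\delta)$ correspond to even extremizers for $(\mathbb{EP}1)(\alpha,\beta\,;1\,;\delta)$; combining with Theorem \ref{Thm_dimension_shifts} and the radial-symmetrization ideas underlying it (which identify the infimum over radial, resp.\ even, competitors with the full infimum), the task reduces to exhibiting an even $1$-d extremizer with Rayleigh quotient $(\mathbb{EP}1)(\alpha,\beta\,;1\,;\delta)=:c$.

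For this, I would take an even minimizing sequence $\{f_n\}\subset\mathcal{H}_\alpha(1;\delta)$ normalized so that $\|f_n\|_{\mathcal{H}_\beta(1;\delta)}^2=1$ and $\|f_n\|_{\mathcal{H}_\alpha(1;\delta)}^2\to c$. Since $\mathcal{H}_\alpha(1;\delta)$ is a de Branges space (hence a reproducing kernel Hilbert space of entire functions), a subsequence converges weakly in $\mathcal{H}_\alpha(1;\delta)$ and locally uniformly on $\C$ to some even entire $f$ of exponential type $\leq\delta$, with $\|f\|_{\mathcal{H}_\alpha}^2\leq c$ by weak lower semicontinuity. To transfer the normalization of the denominator, I split $\R$ into $\{|x|\leq R\}\cup\{|x|>R\}$: on the tail the bound $|x|^{2\beta+1}\leq R^{2(\beta-\alpha)}|x|^{2\alpha+1}$ for $|x|\geq R\geq 1$ combined with the uniform $\mathcal{H}_\alpha$-bound on $\{f_n\}$ (and $\alpha\geq\beta$) gives uniform smallness as $R\to\infty$, while on $\{|x|\leq R\}$ local uniform convergence and the integrability of $|x|^{2\beta+1}$ provide a dominated-convergence step; letting $R\to\infty$ yields $\|f\|_{\mathcal{H}_\beta}^2=1$. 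Hence $f\not\equiv 0$ and $\Phi(f)=c$.

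Lifting $f=\phi(z^2)$ to $F(\z):=\phi(z_1^2+\cdots+z_d^2)$ then produces the sought radial extremizer. The main obstacle is the nondegeneracy of $f$: escape of mass to infinity is controlled by the weight comparison above (which critically uses $\alpha\geq\beta$), while concentration at the origin is controlled by the reproducing-kernel bound on $|f_n(z)|$ for $z$ near $0$ together with local integrability of $|x|^{2\beta+1}$, both relying on the standing hypothesis $\beta>-1$.
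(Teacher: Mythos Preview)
Your proposal is correct and follows essentially the same route as the paper: reduce to $d=1$ via the lift/profile correspondence, run a weak-compactness argument in the reproducing kernel Hilbert space $\mathcal{H}_\alpha(1;\delta)$ with a tail estimate coming from $|x|^{2\beta+1}\le R^{2(\beta-\alpha)}|x|^{2\alpha+1}$ on $\{|x|>R\}$, and then lift the resulting even one-dimensional extremizer back to a radial extremizer in $\C^d$.

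Two small remarks on presentation. First, the paper normalizes the $\alpha$-norm (so $\|f_n\|_{\mathcal{H}_\alpha}=1$) rather than the $\beta$-norm; either choice works, but with your normalization the clean way to conclude is to combine the tail estimate (giving $\|f\|_{\mathcal{H}_\beta}^2\ge 1$) with Fatou's lemma (giving $\|f\|_{\mathcal{H}_\beta}^2\le 1$), so that $\Phi(f)\le c$ and hence $\Phi(f)=c$. Second, your appeal to Theorem~\ref{Thm_dimension_shifts} to justify restricting to an \emph{even} minimizing sequence is slightly delicate, since Step~4 of that proof in turn uses the existence of an even $1$-dimensional extremizer. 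The paper avoids any circularity by taking a general (not necessarily even) minimizing sequence in $\mathcal{H}_\alpha(1;\delta)$, producing an extremizer $g$, and then proving \emph{a posteriori} (Proposition~\ref{Prop_even_extremizers_dim1}) that $g$ must be even; your compactness argument works verbatim for a general sequence, so this reordering costs nothing.
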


We discuss the proof of this result in Section \ref{Sec_Existence_of _extremizers}. In dimension $d=1$ we go further and establish that any extremizer of $(\mathbb{EP}1)(\alpha, \beta\,; 1\,; \delta)$ must be an even function; see Proposition \ref{Prop_even_extremizers_dim1}.

\smallskip

Our next result is concerned with the continuity of $(\mathbb{EP}1)(\alpha, \beta\,; d\,; \delta)$ in all real-valued parameters.

\begin{theorem}[Continuity] \label{Thm_continuity}
The function $(\alpha, \beta, \delta) \mapsto (\mathbb{EP}1)(\alpha, \beta\,; d\,; \delta)$ is continuous in the range $\alpha \geq \beta >-1$ and $\delta >0$. 
\end{theorem}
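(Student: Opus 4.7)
The first reduction is via Theorem \ref{Thm_dimension_shifts}, which brings everything to $d=1$; the scaling identity \eqref{20220816_15:18_1} then writes $(\mathbb{EP}1)(\alpha, \beta; 1; \delta) = \delta^{2(\beta - \alpha)}\, \Phi(\alpha, \beta)$ with $\Phi(\alpha, \beta) := (\mathbb{EP}1)(\alpha, \beta; 1; 1)$, and the prefactor is jointly continuous in the allowed parameter range, so the task becomes joint continuity of $\Phi$ on $\{\alpha \geq \beta > -1\}$. I denote $N(\gamma, F) := \int_{\R} |F(x)|^2 |x|^{2\gamma + 1}\, \d x$, so that $\Phi(\alpha, \beta) = \inf\{N(\alpha, F)/N(\beta, F) : 0 \neq F \in \mc{H}_\alpha(1;1)\}$.

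For \emph{upper semi-continuity} at $(\alpha_0, \beta_0)$, I would start from an extremizer $F$ for $\Phi(\alpha_0, \beta_0)$ (whose existence is ensured by Theorem \ref{Thm2}) and replace it by an approximant of the same exponential type with rapid polynomial decay on $\R$. A concrete candidate is
\begin{equation*}
G_\epsilon(z) := F((1-\epsilon) z) \left(\frac{\sin(\epsilon z/N)}{\epsilon z/N}\right)^N,
\end{equation*}
entire of exponential type $\leq 1$, pointwise dominated by $|F((1-\epsilon)\cdot)|$ on $\R$ (the second factor is $\leq 1$ in modulus), and decaying as $|x|^{-N}$ (using that $F$ is bounded on $\R$ by Plancherel--Polya). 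For $N$ large enough, $G_\epsilon \in \mc{H}_\alpha(1;1) \cap \mc{H}_\beta(1;1)$ uniformly for $(\alpha, \beta)$ in a neighborhood of $(\alpha_0, \beta_0)$, and the dominated convergence theorem renders $(\alpha, \beta) \mapsto N(\alpha, G_\epsilon)/N(\beta, G_\epsilon)$ continuous there. Plugging $G_\epsilon$ into the variational definition of $\Phi$ yields $\limsup_{(\alpha, \beta) \to (\alpha_0, \beta_0)} \Phi(\alpha, \beta) \leq N(\alpha_0, G_\epsilon)/N(\beta_0, G_\epsilon)$; letting $\epsilon \to 0$, the change of variables $\int |F((1-\epsilon)x)|^2 |x|^{2\gamma + 1}\, \d x = (1-\epsilon)^{-2\gamma - 2} N(\gamma, F)$ combined with Fatou's lemma gives $N(\gamma, G_\epsilon) \to N(\gamma, F)$ for $\gamma \in \{\alpha_0, \beta_0\}$, and hence $\limsup \Phi(\alpha, \beta) \leq \Phi(\alpha_0, \beta_0)$.

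For \emph{lower semi-continuity}, I take $(\alpha_n, \beta_n) \to (\alpha_0, \beta_0)$ and near-extremizers $F_n \in \mc{H}_{\alpha_n}(1;1)$ with $N(\beta_n, F_n) = 1$ and $N(\alpha_n, F_n) \leq \Phi(\alpha_n, \beta_n) + 1/n$ (bounded thanks to the upper semi-continuity already proved). Reproducing-kernel and Plancherel--Polya estimates for the weighted Paley--Wiener spaces produce local uniform bounds on $(F_n)$; Montel's theorem then yields a subsequence converging uniformly on compacts of $\C$ to some entire function $F$ of exponential type $\leq 1$. In the main case $\alpha_0 > \beta_0$, uniform tightness at infinity follows from the elementary estimate
\begin{equation*}
\int_{|x| > R} |F_n(x)|^2 |x|^{2\beta_n + 1}\, \d x \leq R^{-2(\alpha_n - \beta_n)} N(\alpha_n, F_n), \qquad R \geq 1,
\end{equation*}
since $\alpha_n - \beta_n \to \alpha_0 - \beta_0 > 0$; tightness near the origin follows from local uniform bounds on $F_n$ and local integrability of $|x|^{2\beta_n + 1}$ (for $\beta_n$ away from $-1$). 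Together with uniform convergence on compact annuli $\epsilon \leq |x| \leq R$, this gives $N(\beta_n, F_n) \to N(\beta_0, F) = 1$, so $F \neq 0$, while Fatou's lemma yields $N(\alpha_0, F) \leq \liminf N(\alpha_n, F_n)$; hence $\Phi(\alpha_0, \beta_0) \leq \liminf \Phi(\alpha_n, \beta_n)$. The diagonal case $\alpha_0 = \beta_0$ (where $\Phi \equiv 1$) needs separate treatment, for instance via the log-convexity of $\gamma \mapsto \log N(\gamma, F)$ (H\"older's inequality), together with the fact that near-extremizers remain tight as $\alpha_n - \beta_n \to 0$.

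The main obstacle is this lower semi-continuity step: the upper bound is routine once a sufficiently decaying approximant of the extremizer is available, but showing that a minimizing sequence does not lose mass requires the weighted tightness estimates above. The argument is cleanest in the strict case $\alpha_0 > \beta_0$, where the gap between the two weights controls the mass at infinity; the boundary case $\alpha_0 = \beta_0$ is more subtle and calls for a separate perturbative treatment.
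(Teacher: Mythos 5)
Your overall strategy coincides with the paper's: reduce to $d=1$, $\delta=1$; prove upper semicontinuity by testing with a rapidly decaying approximant of an extremizer and invoking dominated convergence; prove lower semicontinuity by taking normalized near-extremizers, extracting a locally uniform limit via the reproducing kernel, and using the weight gap $\alpha_0-\beta_0>0$ to rule out loss of mass at infinity before applying Fatou. Two of your choices differ in execution and are arguably cleaner: (a) the paper's Lemma \ref{Lem21_aprox_Schwartz} builds the decaying approximant by mollifying $\widehat{f}$ and re-dilating, whereas you multiply by a power of a sinc and dilate -- both yield a type-$1$ test function to which dominated convergence in $(\alpha,\beta)$ applies; (b) by normalizing $N(\beta_n,F_n)=1$ (rather than the paper's $\int|\psi_n|^2|x|^{2\beta+1}\,\d x=1$) you get the tail bound $\int_{|x|>R}|F_n|^2|x|^{2\beta_n+1}\,\d x\le R^{-2(\alpha_n-\beta_n)}N(\alpha_n,F_n)$ directly, replacing the paper's contradiction argument in Lemma \ref{Lem22_auxi_res_internal}.

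Two points need attention. First, a minor one: for $-1<\alpha<-\tfrac12$ a function in $\mc{H}_{\alpha}(1;1)$ need not lie in $L^2(\R)$ and is \emph{not} bounded in general; the reproducing-kernel bound $|F(x)|^2\le c_\alpha\|F\|^2K_\alpha(x,x)$ with $K_\alpha(x,x)\sim|x|^{-2\alpha-1}$ gives growth like $|x|^{-\alpha-1/2}$. This is harmless for your sinc construction (polynomial growth is all you need, and $N$ absorbs it), but the appeal to Plancherel--P\'olya boundedness as stated is wrong in that range. Second, and more substantively: the diagonal case $\alpha_0=\beta_0$ is part of the assertion and your treatment of it is only a gesture. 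The paper disposes of it by citing the asymptotics of Theorem \ref{Thm5 - Asympt} (whose error term vanishes as $\alpha-\beta\to0$). Your log-convexity idea can be made to work -- Lyapunov's inequality applied with an auxiliary exponent $\beta_n-\epsilon$ gives $\Phi(\alpha_n,\beta_n)\ge\Phi(\beta_n,\beta_n-\epsilon)^{(\alpha_n-\beta_n)/\epsilon}$, which tends to $1$ provided you have a lower bound for $\Phi(\beta_n,\beta_n-\epsilon)$ that is positive uniformly in $n$ -- but that uniform positivity is exactly the nontrivial input (the qualitative bound \eqref{20220816_14:41_1} is not uniform in the parameters), so as written this step is a genuine gap rather than a routine omission.
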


The proof of this result is carried out in Section \ref{Sec_cont_proof}.

\subsubsection{Part II: Asymptotics} Finding the exact value of the constant $(\mathbb{EP}1)(\alpha, \beta\, ; d\,; \delta)$ is not a simple task in general, and one is naturally led to study this object from an asymptotic point of view. As observed in \S \ref{Part I: symmetries and extremizers}, from now on let us restrict ourselves to the scenario $d=1$ and $\delta=1$, without loss of generality. That is, we are looking at \footnote{The reader may wonder why we choose the normalization $|x|^{2\alpha +1}$ instead of $|x|^{2\alpha}$ (which is perhaps more common in PDEs given that it would be related to the derivative or order $\alpha$). The reason is that, as we shall see, these problems are related to the classical Bessel functions of the first kind $J_{\alpha}$, and we opted for the simplicity of notation at this end.
}
\begin{equation*}
(\mathbb{EP}1)(\alpha, \beta\,; 1\,; 1):= \inf_{ 0 \neq f \in  \mc{H}_{\alpha}(1; 1) } \frac{\int_{\R} |f(x)|^2 \, |x|^{2\alpha +1}\,\d x}{\int_{\R} |f(x)|^2 \, |x|^{2\beta +1}\,\d x}.
\end{equation*}
It is convenient to introduce a logarithmic scale, and look at the function $(\alpha, \beta) \mapsto \log\big( (\mathbb{EP}1)(\alpha, \beta\,; 1\,; 1)\big)$. We aim to find a description of the form 
$$\log\big( (\mathbb{EP}1)(\alpha, \beta\,; 1\,; 1)\big) = M(\alpha, \beta)  + R(\alpha, \beta)\,,$$
where $M$ would be a main term as $\alpha, \beta$ move towards the limits $-1$ and/or $\infty$, and $R$ would be a remainder term. Also, when $\alpha = \beta$ we know that $\log\big( (\mathbb{EP}1)(\alpha, \beta\,; 1\,; 1)\big) = 0$, and we would like our $M$ and $R$ to reflect that, possibly under some reasonable control. This is the spirit of our next result.

\begin{theorem}[Asymptotics]\label{Thm5 - Asympt} For $\alpha \geq \beta > -1$ we have
\begin{align}\label{20230131_17:29}
\begin{split}
\log\big( (\mathbb{EP}1)(\alpha, \beta\,; 1\,; 1)\big)  = 2(\alpha& - \beta)\log(\alpha +2)  + \log \left(\frac{\beta +1}{\alpha +1}\right) \\
&  + O\left( \left(\frac{(\alpha - \beta)(\alpha +2)}{(\alpha +1)}\right) \, \log \left( \frac {2(\alpha +1)(\alpha - \beta + 1)}{(\alpha - \beta)(\alpha +2)}\right)\right),
\end{split}
\end{align}
where the implied constant is universal.
\end{theorem}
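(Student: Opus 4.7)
The plan is to reduce to $d=1$, $\delta=1$ via Theorem \ref{Thm_dimension_shifts} and \eqref{20220816_15:18_1}, and to work with even functions via Proposition \ref{Prop_even_extremizers_dim1}. Both directions of \eqref{20230131_17:29} will be calibrated to the natural scale $R := \alpha+2$.

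For the upper bound, I will exhibit an explicit test function concentrated near $R$. A convenient choice is
\[
f_N(x) \,=\, \left(\frac{\sin(x/N)}{x/N}\right)^N, \qquad N = \lceil \alpha \rceil + 2,
\]
which is even, entire of exponential type exactly $1$, and lies in $\mc{H}_\alpha(1;1)$ since $2N > 2\alpha+2$. Using the Gaussian approximation $f_N(x) \approx e^{-x^2/(6N)}$ for $|x| \lesssim N^{3/4}$ and the tail bound $|f_N(x)| \lesssim (N/|x|)^N$ at infinity, one verifies uniformly in $-1 < \gamma \leq \alpha$:
\[
\int_{\R} |f_N(x)|^2\, |x|^{2\gamma+1}\,\d x \,=\, (3N)^{\gamma+1}\,\Gamma(\gamma+1)\,\bigl(1 + o(1)\bigr).
\]
Taking the ratio at $\gamma = \alpha$ and $\gamma = \beta$, and applying Stirling's formula to $\log(\Gamma(\alpha+1)/\Gamma(\beta+1))$, the resulting expression matches the right-hand side of \eqref{20230131_17:29} up to an error of the claimed form (a constant-scale discrepancy of $(\alpha-\beta)\log 3$ from $N = \alpha+2$ being absorbed).

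For the lower bound, given $f \in \mc{H}_\alpha(1;1)$ even, I split the denominator integral at the same $R = \alpha+2$:
\[
\int_\R |f|^2 |x|^{2\beta+1}\,\d x \,=\, \int_{|x|\le R} + \int_{|x|>R}.
\]
The tail is handled trivially by $|x|^{2(\beta-\alpha)} \le R^{2(\beta-\alpha)}$ on $\{|x|>R\}$, giving at most $R^{2(\beta-\alpha)} \|f\|_{\mc{H}_\alpha(1;1)}^2$. For the head piece, the reproducing-kernel inequality $|f(x)|^2 \le K_\alpha(x,x)\|f\|_{\mc{H}_\alpha(1;1)}^2$ reduces the task to showing
\[
\int_{|x|\le R} K_\alpha(x,x)\,|x|^{2\beta+1}\,\d x \,\lesssim\, R^{2(\beta-\alpha)} \cdot \frac{\alpha+1}{\beta+1}.
\]
The de Branges representation of the even subspace of $\mc{H}_\alpha(1;1)$ in terms of Bessel functions $J_{\alpha+1/2}$ (cf.\ Section \ref{Sec8_mult_z}) provides a tractable formula for $K_\alpha(x,x)$, and the bound then follows from Bessel asymptotics on the relevant ranges. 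Combining the two pieces and taking logarithms yields the lower bound in \eqref{20230131_17:29}.

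The hardest part will be the sharp analysis of $K_\alpha(x,x)$ across the transition regime $|x| \approx \alpha$, where Bessel functions change character. A cleaner alternative is to first establish \eqref{20230131_17:29} for integer shifts $k = \alpha - \beta$ via Theorems \ref{Teoremacco_versao_dB} and \ref{Thmaco_class} on the multiplication operator $M_{z^k}$ (which reduce the problem to asymptotics of Bessel zeros), and then extend to non-integer shifts via the concavity of $\beta \mapsto \log\bigl((\mathbb{EP}1)(\alpha,\beta;1;1)\bigr)$---a consequence of the log-convexity in $\gamma$ of $\gamma \mapsto \int_\R |f|^2 |x|^{2\gamma+1}\,\d x$, which itself follows from Cauchy-Schwarz. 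The error term in \eqref{20230131_17:29} is consistent with what one expects from Stirling corrections and the imperfect concentration of the test function at scale $R$.
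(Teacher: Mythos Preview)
Your upper bound route via the Fej\'er-type test function $f_N = (\sin(x/N)/(x/N))^N$ is different from the paper's and is morally plausible, but the claimed uniform Gaussian approximation with $o(1)$ error cannot hold when $N$ is bounded (e.g.\ when $-1<\alpha<0$, so $N=2$), and you would need at least a multiplicative $O(1)$ version together with a careful check that the $(\beta+1)^{-1}$ singularity and the Stirling corrections combine correctly. The paper instead tests with the reproducing kernel at the origin, $f(z)=K_\alpha(0,z)=B_\alpha(z)/(\pi z)$, computes both weighted norms in closed form via a tabulated Bessel integral, and then runs Stirling on the resulting Gamma expression; this gives the upper bound in \eqref{20230131_17:29} with no approximation step at all.

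The real gap is in your lower bound. You propose to use only the reproducing kernel inequality $|f(x)|^2\le K_\alpha(x,x)\|f\|^2$, split at $R=\alpha+2$, and then control $\int_{|x|\le R}K_\alpha(x,x)|x|^{2\beta+1}\,\d x$ by Bessel asymptotics. The paper tries exactly this strategy in \S\ref{Sec_LBII}, but only succeeds in the range $-1<\beta<\alpha\le 0$, where Lemma~\ref{Lem14_UBRK} gives a usable pointwise bound on $K_\alpha(x,x)$ and the optimizing cut $s_0$ lands inside its admissible window $s\le 2\sqrt{\alpha+1}$; the paper explicitly notes (see \eqref{20230208_17:34}) that this verification \emph{fails for large $\alpha$}. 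For $\alpha>\beta\ge -\tfrac12$ the paper uses a completely different mechanism: a quantitative Fourier uncertainty principle (Lemma~\ref{Lem9 - FU}, built from Nazarov and Donoho--Stark) to the effect that an $L^2$ function with Fourier support in $[-\tfrac{1}{2\pi},\tfrac{1}{2\pi}]$ satisfies $\int_{[-s,s]}|f|^2\le e^{Cr}(1-e^{-Cs})\int_{[-r,r]^c}|f|^2$, followed by an explicit two-parameter optimization in $(r,s)$. The mixed regime ($\beta<-\tfrac12<0<\alpha$) is then handled by inserting a test point and multiplying the two partial inequalities. The paper states outright that ``neither of these strategies would be sufficient to tackle the whole range on its own,'' and your proposal is precisely one of the two on its own. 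The transition-regime analysis of $K_\alpha(x,x)$ near $|x|\sim\alpha$ that you flag as ``the hardest part'' is not carried out, and there is no indication that Bessel Airy-regime asymptotics would yield the required bound with the right constants.

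Your fallback (integer shifts via Theorems~\ref{Teoremacco_versao_dB}--\ref{Thmaco_class}, then concavity in $\beta$) is an interesting idea, and the concavity of $\beta\mapsto\log(\mathbb{EP}1)(\alpha,\beta;1;1)$ does follow from log-convexity of moments as you say. But Theorems~\ref{Teoremacco_versao_dB}--\ref{Thmaco_class} characterize $\lambda_0$ only as the first root of an $\ell\times\ell$ determinant of Bessel quotients, not as an explicit function of $\alpha,\beta$; extracting the asymptotic \eqref{20230131_17:29} from that determinant uniformly in $k=\alpha-\beta$ is itself a substantial task that you have not addressed.
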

We described our error term above in a unified expression for brevity, but a careful look at it reveals its behaviour in the different regimes. Let us briefly reflect on the possibilities. Writing $x = \alpha - \beta$ and $y = \alpha +1$ for simplicity, note that $0 \leq x <y$ and $\frac{y(x+1)}{x(y+1)} \geq 1$. If $\alpha$ is away from $-1$, say $\alpha \geq -\frac12$, then our error term is $O\big(x \log (\frac{2(x+1)}{x})\big)$, and this can be further understood as $O\big(x \log (\frac{1}{x})\big)$ for $x$ small and $O(x)$ for $x$ large. If $\alpha \leq -\frac12$, our error term is $O\big(\frac{x}{y}\log (\frac{2y}{x})\big)$ which, in particular, is bounded. If $\alpha \geq \beta \geq -\frac12$, since $\log \left(\frac{\beta +1}{\alpha +1}\right) = O(\alpha - \beta)$ (just observe that the derivative of $s \mapsto \log(s+1)$ is bounded in this range and apply the mean value theorem), our expression becomes
$$\log\big( (\mathbb{EP}1)(\alpha, \beta\,; 1\,; 1)\big)  = 2(\alpha - \beta)\log(\alpha +2) + O\left((\alpha - \beta) \log \left(\frac{2(\alpha - \beta+1)}{\alpha - \beta}\right)\right).$$
Similarly, if $-1 < \beta \leq \alpha \leq -\frac12$, we have 
$$\log\big( (\mathbb{EP}1)(\alpha, \beta\,; 1\,; 1)\big)  = \log \left(\frac{\beta +1}{\alpha +1}\right) + O\left(\left(\frac{\alpha - \beta}{\alpha +1} \right) \log \left(\frac{2(\alpha +1)}{\alpha - \beta}\right)\right).$$
If $\alpha \to \infty$ and $\beta \to -1$ simultaneously, there might be a legitimate match between the two components of the main term in \eqref{20230131_17:29}.

\smallskip

The proof of Theorem \ref{Thm5 - Asympt} is carried out in Section \ref{Sec5_asym_ppp}. For this proof we establish upper and lower bounds that match the proposed asymptotic \eqref{20230131_17:29}. The upper bound is obtained via a suitable example involving the reproducing kernel of the space. The lower bound is subtler, and we need two different mechanisms to treat the two different ranges: $\alpha$ and $\beta$ near the upper endpoint $\infty$, and $\alpha$ and $\beta$ near the lower endpoint $-1$. For the former we rely on quantitative versions of certain classical Fourier uncertainty principles, and run an optimization procedure to arrive at the desired bound. For the latter, where the classical Fourier transform is not available, we use a different strategy, relying on certain estimates involving the reproducing kernel of the space, and again running an optimization procedure on the parameters. It is interesting to notice that, at least as far as our setup goes, neither of these strategies would be sufficient to tackle the whole range on its own. The remaining cases, where $\alpha$ is near $\infty$ and $\beta$ is near $-1$, are addressed via a comparison with a suitable test point between $\beta$ and $\alpha$.

\subsubsection{Part III: Sharp constants} \label{Part3 - SC}We continue to restrict our attention to the scenario $d=1$ and $\delta=1$, without loss of generality. In some special occasions we are able to identify the precise value of the constant $(\mathbb{EP}1)(\alpha, \beta\, ; 1\,; 1)$. Before proceeding to this description, let us fix some notation and review some facts from the theory of Bessel functions that shall be relevant for our purposes.

\smallskip

For $\nu > -1$ let $A_{\nu}:\C \to \C$ and $B_{\nu}:\C \to \C$ be the real entire functions defined by 
\begin{equation}\label{20220816_17:43}
A_{\nu}(z) := \sum_{n=0}^{\infty} \frac{(-1)^n \big(\tfrac12 z\big)^{2n}}{n!(\nu +1)(\nu +2)\ldots(\nu+n)}
\end{equation}
and
\begin{equation}\label{20221121_10:21}
B_{\nu}(z) := \sum_{n=0}^{\infty} \frac{(-1)^n \big(\tfrac12 z\big)^{2n+1}}{n!(\nu +1)(\nu +2)\ldots(\nu+n+1)}.
\end{equation}
These functions are related to the classical Bessel functions of the first kind by the identities
\begin{align}\label{20230210_14:02}
\begin{split}
A_{\nu}(z) &= \Gamma(\nu +1) \left(\tfrac12 z\right)^{-\nu} J_{\nu}(z), \\
B_{\nu}(z) & = \Gamma(\nu +1) \left(\tfrac12 z\right)^{-\nu} J_{\nu+1}(z).
\end{split}
\end{align}
Both $A_{\nu}$ and $B_{\nu}$ have only real, simple zeros and have no common zeros (note that, in the simplest case $\nu = -1/2$, we have $A_{-1/2}(z) = \cos z$ and $B_{-1/2}(z) = \sin z$). The function $A_{\nu}$ is even while $B_{\nu}$ is odd, and we have $\tau(A_{\nu}) =\tau(B_{\nu}) = 1$. Moreover, they satisfy the system of differential equations 
\begin{equation}\label{20221130_14:53}
A_{\nu}'(z) = - B_{\nu}(z) \ \ \ {\rm and} \ \ \ B_{\nu}'(z) = A_{\nu}(z) - (2\nu +1)z^{-1}B_{\nu}(z).
\end{equation}

Our main result in this section is a complete solution for the extremal problem (EP1) in the cases where $\alpha = \beta + k$ for $k \in \N$. The answer is given in terms of the smallest positive solution of a certain explicit determinant equation involving Bessel functions. Let 
$$0 < {\frak j}_{\nu, 1} < \j_{\nu, 2} <  \j_{\nu, 3} < \ldots$$
denote the sequence of positive zeros of the Bessel function $J_{\nu}$, and define the meromorphic function
$$C_{\nu}(z) := \frac{B_{\nu}(z)}{A_{\nu}(z)}.$$
When $\nu = -1/2$, we simply have $C_{-1/2}(z) = \tan z$.

\begin{theorem}[Sharp constants]\label{Teoremacco_versao_dB_Intro}
Let $\beta > -1$, let $k \in \N$ and set $ \lambda_0:=\big((\mathbb{EP}1)(\beta+k, \beta\,; 1\,; 1)\big)^{1/2k}$. 
 \begin{itemize}
 \item[(i)] If $k=1$ we have $\lambda_0 = {\frak j}_{\beta, 1}$. 
 \smallskip
 \item[(ii)] If $k \geq 2$, set $\ell := \lfloor k/2 \rfloor$. Then $\lambda_0$ is the smallest positive solution of the equation 
 \begin{align*}
 A_{\beta}(\lambda) \det \mc{V_{\beta}}(\lambda) = 0\,,
 \end{align*}
 where $\mc{V}_{\beta}(\lambda)$ is the $\ell \times \ell$ matrix with entries
\begin{align*}
\ \ \ \ \ \ \  \ \ \ \ \ \ \ \  \ \ \ \big(\mc{V}_{\beta}(\lambda)\big)_{mj} = \sum_{r=0}^{k-1} \omega^{r (4\ell - 2m - 2j +3)} \,C_{\beta}\big(\omega^{r} \lambda\big)  \ \ \  \ \  \ \ \ (1\leq m,j \leq \ell)\,,
\end{align*}
and $\omega := e^{\pi i/k}$.
 \end{itemize}
\end{theorem}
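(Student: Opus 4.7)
My plan is to deduce Theorem \ref{Teoremacco_versao_dB_Intro} from the more general de Branges space result stated as Theorem \ref{Teoremacco_versao_dB}, after first giving an explicit isometric identification of $\mc{H}_\beta(1;1)$ with a concrete de Branges space and then performing a Bessel-specific computation to simplify the resulting abstract determinant equation.

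First, I would recognize $\mc{H}_\beta(1;1)$ as the de Branges space $\mc{H}(E_\beta)$ associated with the Hermite-Biehler function $E_\beta(z) = A_\beta(z) - iB_\beta(z)$. The relations \eqref{20230210_14:02} together with the standard asymptotics of $J_\beta$ give $|E_\beta(x)|^2 = A_\beta(x)^2+B_\beta(x)^2 \sim c_\beta |x|^{-(2\beta+1)}$ on the real line, so the $\mc{H}(E_\beta)$-norm $\int_\R |F|^2/|E_\beta|^2\, dx$ matches, up to a universal constant, the weighted $L^2$-norm in \eqref{20221121_11:09}; the exponential-type condition carries over directly. In this identification, the reproducing kernel of $\mc{H}_\beta(1;1)$ is written explicitly through $A_\beta$ and $B_\beta$ via the standard de Branges formula.

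Second, I would rewrite the extremal problem as a multiplication problem. Since $\int_\R |F(x)|^2 |x|^{2(\beta+k)+1}\,dx = \int_\R |x^k F(x)|^2 |x|^{2\beta+1}\,dx$, the ratio defining $(\mathbb{EP}1)(\beta+k,\beta;1;1)$ equals the Rayleigh quotient $\|z^k F\|_{\mc{H}_\beta}^2/\|F\|_{\mc{H}_\beta}^2$ for the operator $M_{z^k}\colon F\mapsto z^k F$ acting on $\mc{H}(E_\beta)$, with $F$ ranging over those functions for which $z^kF\in \mc{H}(E_\beta)$. This is precisely the setup of Theorem \ref{Teoremacco_versao_dB}. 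Applying that result with $E=E_\beta$ characterizes $\lambda_0$ as the smallest positive root of an abstract determinant equation whose entries involve $A_\beta$ and $B_\beta$ evaluated at the points $\omega^r\lambda$ for $r=0,1,\ldots,k-1$. These roots of unity enter naturally because, over $\C$, the symbol $z^k-\lambda^k$ factors as $\prod_{r=0}^{k-1}(z-\omega^r\lambda)$, so the first-order optimality conditions for the constrained extremal problem split across these $k$ linear factors, and the existence of a genuine extremizer (guaranteed by Theorem \ref{Thm2}) is equivalent to the rank drop of the associated coefficient matrix.

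Third, and this is the technical heart, I would reduce the abstract $k\times k$ determinant to the explicit $\ell\times \ell$ matrix $\mc{V}_\beta(\lambda)$ stated in the theorem. The two main tools are: (a) the parity relations, namely that $A_\beta$ is even and $B_\beta$ is odd, which pair $\omega^r$ with $\omega^{k-r}=-\omega^{-r}$ and effectively halve the number of independent conditions from $k$ down to $\ell=\lfloor k/2\rfloor$, while extracting one isolated factor of $A_\beta(\lambda)$ responsible for the $r=0$ contribution in the odd-$k$ case; (b) the Wronskian-type relations implicit in \eqref{20221130_14:53}, together with the identity $C_\beta=B_\beta/A_\beta$, which allow one to clear the $A_\beta(\omega^r\lambda)$ denominators and compress the matrix entries into single $C_\beta$-values. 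A careful book-keeping of the phases (using $\omega^{2k}=1$ and $\omega^k=-1$) produces the stated exponent $\omega^{r(4\ell-2m-2j+3)}$. As a consistency check, when $k=1$ one has $\ell=0$, the matrix $\mc{V}_\beta$ is empty with determinant $1$ by convention, and the equation reduces to $A_\beta(\lambda)=0$; by \eqref{20230210_14:02} the smallest positive zero of $A_\beta$ equals $\j_{\beta,1}$, matching part (i).

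I expect the main obstacle to be the algebraic reduction in the third step. The general de Branges determinant will appear as a $k\times k$ matrix of values of $E_\beta$ (and possibly its derivatives), and collapsing it to an $\ell\times \ell$ determinant in $C_\beta$ alone, with the precise exponent $4\ell-2m-2j+3$, requires delicate row and column manipulations that interlock the parity of $A_\beta,B_\beta$ with the multiplicative structure $\omega^k=-1$. The remaining pieces (identification as a de Branges space, reduction to the multiplication problem, invocation of Theorem \ref{Teoremacco_versao_dB}) are more structural and should proceed cleanly once the framework is set up.
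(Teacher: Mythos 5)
Your overall route --- identify $\mc{H}_\beta(1;1)$ with the de Branges space $\mc{H}(E_\beta)$, recast $(\mathbb{EP}1)(\beta+k,\beta;1;1)$ as the Rayleigh quotient for multiplication by $z^k$, and invoke Theorem \ref{Teoremacco_versao_dB} --- is exactly the paper's proof. However, two points need correction. First, your justification of the norm identification is insufficient: from the asymptotics of $J_\beta$ you only get that $|E_\beta(x)|^{-2}$ and $|x|^{2\beta+1}$ are \emph{comparable} weights (two-sided bounds, as in Lemma \ref{Sec5_rel_facts}(i)), which yields equivalence of norms but not equality up to a single multiplicative constant. Equivalence of norms does not preserve the value of an infimum of a ratio, so the conclusion $(\mathbb{EP}1)(\beta+k,\beta;1;1)=(\mathbb{EP}4)(E_\beta;k)$ would not follow. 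What is actually needed is the exact identity $\int_\R |f|^2|E_\beta|^{-2}\,\d x = c_\beta\int_\R|f|^2|x|^{2\beta+1}\,\d x$ valid for every $f\in\mc{H}(E_\beta)$ (Lemma \ref{Sec5_rel_facts}(ii), from Holt--Vaaler); applying it to both $f$ and $z^kf$ makes the two Rayleigh quotients literally equal. You should also record that $E_\beta$ satisfies the hypotheses (C1)--(C3) of the general theorem and that $A_\beta$ has infinitely many zeros, all of which the paper checks in \S\ref{hom_dB_subsec}.

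Second, the step you describe as the ``technical heart'' --- collapsing a $k\times k$ determinant in values of $E_\beta$ down to the $\ell\times\ell$ matrix $\mc{V}_\beta(\lambda)$ via parity and Wronskian manipulations --- is not part of the specialization at all. Theorem \ref{Teoremacco_versao_dB} is already stated with the $\ell\times\ell$ matrix $\mc{V}(\lambda)$ whose $(m,j)$ entry is $\sum_{r=0}^{k-1}\omega^{r(4\ell-2m-2j+3)}C(\omega^r\lambda)$ with $C=B/A$; the reduction from $k$ constraints to $\ell=\lfloor k/2\rfloor$ (using evenness of extremizers) and the appearance of the $k$-th roots of $\lambda^{2k}$ via the partial-fraction identity of Lemma \ref{Lem26} are carried out once and for all in the proof of the general theorem in Section \ref{Sec8_mult_z}. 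The passage to the Bessel case is pure substitution: $A=A_\beta$, $C=C_\beta$, and $\xi_n=\frak{j}_{\beta,n}$ because $A_\beta(z)=\Gamma(\beta+1)(z/2)^{-\beta}J_\beta(z)$ has the same positive zeros as $J_\beta$ (this also settles $k=1$ directly, as you note). The only genuinely Bessel-specific simplification the paper records is $c_n=-A_\beta'(\frak{j}_{\beta,n})/B_\beta(\frak{j}_{\beta,n})=1$, via \eqref{20221130_14:53}, and that is needed only for the classification of extremizers in Theorem \ref{Thmaco_class}, not for the determinant equation itself.
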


 \noindent {\sc Remark:} For $k \geq 2$, we shall verify in our proof that $0 < \lambda_0 < \frak{j}_{\beta, \ell +1}$. The function $\lambda \mapsto \det \mc{V}_{\beta}(\lambda)$ is, in principle, a meromorphic function of the variable $\lambda$ that is real-valued on $\R$, but we show that $\lambda \mapsto A_{\beta}(\lambda) \det \mc{V}_{\beta}(\lambda)$ is in fact continuous on the interval $(0,\frak{j}_{\beta, \ell +1})$.

\medskip

We also fully classify the extremizers for $(\mathbb{EP}1)(\beta+k, \beta\,; 1\,; 1)$ when $k \in \N$. When $k=1$ they are the complex multiples of $f(z) = A_{\beta}(z)/(z^2 - \frak{j}_{\beta, 1}^2)$ and, when $k \geq 2$, they have the form \begin{equation*}
f(z) = \sum_{n=1}^{\infty} a_n \frac{ \frak{j}_{\beta,n} \,A_{\beta}(z)}{(z^2 - \frak{j}_{\beta,n}^2)}\,,
\end{equation*}
where $(a_1, a_2, \ldots, a_\ell)$ belongs to the kernel of a certain $\ell \times \ell$ matrix (here $\ell  = \lfloor k/2 \rfloor$) and each $a_n$, for $n >\ell$, is given in terms of $a_1, \ldots, a_{\ell}$; see Theorem \ref{Thmaco_class} for details.

\smallskip

The novelty in our sharp constant approach here is to connect the extremal problem (EP1) with the rich theory of de Branges spaces of entire functions. As a matter of fact, Theorem \ref{Teoremacco_versao_dB_Intro} ends up being a special case of the much more general Theorem \ref{Teoremacco_versao_dB}, that we prove in Section \ref{Sec8_mult_z}. Theorems \ref{Teoremacco_versao_dB} and \ref{Thmaco_class} address the extremal problem (EP4), related to the operator given by multiplication by $z^k$ in a general de Branges space, and are certainly among the key results of this paper as well. We have opted to postpone this discussion to Section \ref{Sec8_mult_z} in order to minimize the technical considerations in this introduction (we shall need a brief overview of the de Branges space theory, which we do in \S \ref{Sub_DB_spaces}), but the reader that is familiar with the theory may take a direct look at this section, which is of independent interest.

\smallskip

An interesting feature of Theorem \ref{Teoremacco_versao_dB_Intro} is that we exploit the fact that extremizers are even functions (see Proposition \ref{Prop_even_extremizers_dim1}) to arrive at a determinant of order $\ell = \lfloor k/2 \rfloor$; this suits well the treatment for small $k$. As an illustration, for $k=2$, $\big((\mathbb{EP}1)(\beta+2, \beta\,; 1\,; 1)\big)^{1/4}$ is the first positive root of $\lambda \mapsto A_{\beta}(\lambda)\big( C_{\beta}(\lambda) - i C_{\beta}(i\lambda)\big)$,
while, for $k=3$, $\big((\mathbb{EP}1)(\beta+3, \beta\,; 1\,; 1)\big)^{1/6}$ is the first positive root of 
\begin{equation}\label{20230320_12:41}
\lambda \mapsto A_{\beta}(\lambda)\big( C_{\beta}(\lambda) - C_{\beta}(\omega\lambda) + C_{\beta}(\omega^2\lambda)\big)\,,\end{equation}
where $\omega = e^{\pi i /3}$. For example, when $\beta = -1/2$, the first few values are:
\begin{center}
        \begin{tabular}{ |c | c | c |c|c|c|c|c|c|}
        \hline
             & $k=0$ & $k=1$ &$k=2$ & $k=3$ & $k=4$ & $k=5$ & $k=6$ & $k=7$ \\
            \hline
            $\big((\mathbb{EP}1)(-\frac12 +k, -\frac12\,; 1\,; 1)\big)^{1/2k}$ & 1 & $\pi/2$ &$2.36\ldots$ & $\pi$ &$3.90\ldots$ &$4.67\ldots$&$5.43\ldots$&$6.18\ldots$\\
      
        \hline
        \end{tabular}
 \end{center} 
       
 \smallskip
 
\noindent With a quick look at the table above, the fact that $\big((\mathbb{EP}1)(\frac52, -\frac12\,; 1\,; 1)\big)^{1/6} = \pi$ stands out as rather curious. This is due to the fact that, when $\beta = -1/2$ and $k=3$, the function in \eqref{20230320_12:41}, reduces to 
$$\lambda \mapsto -\frac{\sin \lambda\, \big( \cos\lambda - \cosh(\sqrt{3}\lambda)\big)}{\cos \lambda + \cosh(\sqrt{3}\lambda)}.$$
With the dilation relation \eqref{20220816_15:18_1} in mind, one has $(\mathbb{EP}1)(\frac52, -\frac12\,; 1\,; \pi)= 1$, which is equivalent to the sharp inequality \eqref{20230218_17:49} presented in Corollary \ref{Cor_sharp_1}. The characterization of the extremizer of \eqref{20230218_17:49} is a consequence of Theorem \ref{Thmaco_class} below, where we find the alternative series representation
\begin{align*}
f(x) = \frac{2\sqrt{3}}{\pi}\sum_{n\geq1} \frac{\left(n - \frac12\right)^2}{\left(\left(n - \frac12\right)^6 - 1\right) } \frac{ \cos{\pi x}}{ \left(x^2 - \left(n - \frac12\right)^2\right)}.
\end{align*}

\subsection{Application 1: sharp higher order Poincar\'{e} inequalities} \label{Shapr_poincare_section} Some particular cases of Theorem \ref{Teoremacco_versao_dB_Intro} can be equivalently formulated in the framework of sharp Poincaré inequalities via the Fourier transform. For $m \in \Z_{\geq0}$, let $W^{m,2}_0(B_r)$ be the usual $L^2$-Sobolev space of order $m$ and zero boundary data, i.e. the closure of $C^{\infty}_c(B_r)$ in $W^{m,2}(B_r)$, where $B_r \subset \R^d$ is the ball of radius $r$ centered at ${\bf 0}$.  

\begin{corollary}\label{20230224_13:44}
Let $m, n \in \Z_{\geq0}$, with $m \geq n$. For any $g \in W^{m,2}_0(-r, r)$ we have
\begin{align}\label{20230223_18:13}
\int_{-r}^{r} |g^{(n)}(x)|^2 \,\d x \leq \frac{r^{2(m-n)}}{(\mathbb{EP}1)(m-\tfrac12, n-\tfrac12\,; 1\,; 1) } \int_{-r}^{r} |g^{(m)}(x)|^2 \, \d x.
\end{align}
This inequality is sharp. The value of $(\mathbb{EP}1)(m-\tfrac12, n-\tfrac12\,; 1\,; 1)$ is given by Theorem \ref{Teoremacco_versao_dB_Intro}.
\end{corollary}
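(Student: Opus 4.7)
The plan is to convert \eqref{20230223_18:13} into an embedding statement for the weighted Paley--Wiener space $\mc H_\alpha(1;2\pi r)$ via the Fourier transform, and then invoke the scaling relation \eqref{20220816_15:18_1}. Given $g\in W^{m,2}_0(-r,r)$, I first extend $g$ by zero to $\tilde g\in W^{m,2}(\R)$: the vanishing of the boundary traces $g(\pm r), g'(\pm r),\ldots,g^{(m-1)}(\pm r)$ ensures that the distributional derivatives $\tilde g^{(j)}$ for $0\le j\le m$ are $L^2(\R)$ functions, each equal to the zero extension of $g^{(j)}$. Setting $f:=\widehat{\tilde g}$, the Paley--Wiener theorem tells us that $f$ extends to an entire function of exponential type at most $2\pi r$, and the identities $\widehat{\tilde g^{(j)}}(\xi)=(2\pi i\xi)^j f(\xi)$ together with Plancherel give
\begin{equation*}
\int_{-r}^{r}|g^{(j)}(x)|^2\,\d x \,=\, (2\pi)^{2j}\int_{\R}|f(\xi)|^2|\xi|^{2j}\,\d\xi \qquad (0\le j\le m).
\end{equation*}
Reading the exponents through $\alpha:=m-\tfrac12$ and $\beta:=n-\tfrac12$, these are (up to explicit powers of $2\pi$) the squared $\mc H_\alpha(1;2\pi r)$ and $\mc H_\beta(1;2\pi r)$ norms of $f$; in particular $f\in\mc H_\alpha(1;2\pi r)\subset\mc H_\beta(1;2\pi r)$.

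I then apply the defining inequality of (EP1) at exponential type $\delta=2\pi r$ to $f$ and combine it with the scaling identity \eqref{20220816_15:18_1}, which yields
\begin{equation*}
(\mathbb{EP}1)(\alpha,\beta;1;2\pi r) \,=\, (2\pi r)^{2\beta-2\alpha}\,(\mathbb{EP}1)(\alpha,\beta;1;1) \,=\, (2\pi r)^{-2(m-n)}\,(\mathbb{EP}1)(m-\tfrac12,n-\tfrac12;1;1).
\end{equation*}
Substituting the integral identities from the previous step and observing that the factors $(2\pi)^{2(m-n)}$ cancel, the Poincar\'e inequality \eqref{20230223_18:13} pops out immediately.

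For sharpness (the case $m=n$ being trivial), Theorem \ref{Thm2} at exponential type $2\pi r$ produces an extremizer $f_0\in\mc H_\alpha(1;2\pi r)$ of (EP1). Let $g_0$ be the inverse Fourier transform of $f_0$. By Paley--Wiener, $g_0\in L^2(\R)$ with $\supp g_0\subset[-r,r]$; moreover $\int_{\R}|f_0|^2|\xi|^{2m}\,\d\xi<\infty$ combined with Plancherel forces $g_0^{(j)}\in L^2(\R)$ for $0\le j\le m$. Compact support together with Sobolev embedding then forces $g_0(\pm r)=\cdots=g_0^{(m-1)}(\pm r)=0$, so $g_0|_{(-r,r)}\in W^{m,2}_0(-r,r)$ and saturates \eqref{20230223_18:13}. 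The main obstacle is really nothing more than careful bookkeeping of scaling factors and verifying that the vanishing boundary data translates cleanly to the $\mc H_\alpha$ side; no new analytic input beyond Theorem \ref{Thm2}, Paley--Wiener, and \eqref{20220816_15:18_1} is needed.
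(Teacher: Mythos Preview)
Your proof is correct and follows exactly the approach the paper leaves implicit (the paper does not write out a proof of this corollary, but the remark following it indicates precisely this Fourier-transform route, with $r=\tfrac{1}{2\pi}$ as the natural normalization and general $r$ handled by dilation). The bookkeeping with the scaling relation \eqref{20220816_15:18_1} and the powers of $2\pi$ is accurate, and your sharpness argument via Theorem~\ref{Thm2}, Paley--Wiener, and the one-dimensional Sobolev embedding $W^{m,2}(\R)\hookrightarrow C^{m-1}(\R)$ to recover the vanishing boundary data is the right way to close the loop.
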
	
\noindent {\sc Remark:} When $r = \tfrac{1}{2\pi}$, the extremizers of \eqref{20230223_18:13} are the Fourier transforms of the extremizers of Theorem \ref{Thmaco_class} , applied to the situation of Theorem \ref{Teoremacco_versao_dB_Intro} (with $\alpha = m - \tfrac12$ and $\beta = n - \tfrac12$); the extremizers for general $r$ are obtained via a suitable dilation.

\smallskip 

Poincaré inequalities have been extensively studied in many different contexts within PDEs and calculus of variations. In fact, Corollary \ref{20230224_13:44} has been previously obtained via different methods. The cases $(m,n) = (1,0), (2,1)$ date back to Steklov \cite{Stek}. The cases $(n+1,n)$, with $n\geq 2$, date back to Janet \cite{Jan}; see also \cite{NP}. The sharp constant in the generic integer case $(m,n)$ has been recently characterized in the work of Yu. P. Petrova \cite{Petrova} (see also \cite[Theorem 7]{NSc}), also as the smallest positive solution of a certain explicit determinant equation. Petrova's determinant is different than ours (in particular, it has larger order $k = m-n$ and relies on some specific properties of Bessel functions for certain simplifications) but, of course, it must lead to the same answer in these particular cases. The formulation in Theorem \ref{Teoremacco_versao_dB_Intro} has the advantage of being amenable to a generalization to the broader setup of de Branges spaces in Theorem \ref{Teoremacco_versao_dB}. For a detailed account on the history of the inequalities \eqref{20230223_18:13} (also known as Steklov-type inequalities) and some related variants, we refer the reader to the survey article by A. I. Nazarov and A. P. Shcheglova \cite{NSc} and the references therein.

\smallskip

It is also possible to recast our multidimensional extremal problem (EP1), via the Fourier transform, in a problem involving the classical gradient and Laplacian.
\begin{corollary}\label{cor_Laplacian}
Let $d \geq 2$. Let $m, n \in \Z_{\geq0}$, $m_1,n_1 \in \{0,1\}$ be such that $2m +m_1 \geq 2n + n_1$. For any $g \in W^{2m+m_1,2}_0(B_r)$ we have
\begin{align}\label{20230224_15:20}
\int_{B_r} \big|\nabla^{n_1}\big(\Delta^{n}g\big) (\x)\big|^2 \,\d \x \leq \frac{r^{2(2m + m_1 -2n - n_1)}}{(\mathbb{EP}1)(2m+m_1-1+\tfrac{d}{2}, 2n+n_1-1+\tfrac{d}{2}\,; 1\,; 1) } \int_{B_r} \big|\nabla^{m_1}\big(\Delta^{m}g\big)(\x)\big|^2 \, \d \x.
\end{align}
This inequality is sharp. The value of $(\mathbb{EP}1)(2m\!+\!m_1\!-\!1+\tfrac{d}{2}, 2n\!+\!n_1\!-\!1+\tfrac{d}{2}\,; 1\,; 1)$ is given in Theorem \ref{Teoremacco_versao_dB_Intro}.
\end{corollary}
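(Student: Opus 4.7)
The plan is to translate the multidimensional Poincaré-type inequality into a weighted $L^2$ inequality for an entire function of exponential type via the Fourier transform, and then invoke (EP1) together with Theorems \ref{Thm_dimension_shifts} and \ref{Thm2}. First I would set $\alpha := 2m + m_1 - 1 + \tfrac{d}{2}$ and $\beta := 2n + n_1 - 1 + \tfrac{d}{2}$; the hypothesis $2m+m_1 \geq 2n+n_1$ and $d \geq 2$ forces $\alpha \geq \beta$ and $\beta > -1$, so the parameters lie in the range where (EP1) is defined. Note also that $k := \alpha - \beta = 2m+m_1 -2n-n_1 \in \Z_{\geq 0}$, so the sharp constant is the one furnished by Theorem \ref{Teoremacco_versao_dB_Intro}.

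Given $g \in W^{2m+m_1,2}_0(B_r)$, extend by zero to obtain $g \in W^{2m+m_1,2}(\R^d)$ supported in $\overline{B_r}$, and set $F := \widehat{g}$. The $L^2$ Paley-Wiener theorem shows that $F$ extends to an entire function on $\C^d$ of exponential type at most $2\pi r$. Using $\widehat{\partial_j h}(\xxi) = 2\pi i\, \xi_j \widehat{h}(\xxi)$ and Plancherel, one has, for any $j \in \Z_{\geq 0}$ and $j_1 \in \{0,1\}$,
\begin{equation*}
\int_{B_r} \bigl|\nabla^{j_1}(\Delta^{j} g)(\x)\bigr|^2 \d\x \;=\; (4\pi^2)^{2j+j_1} \int_{\R^d} |F(\xxi)|^2 \, |\xxi|^{2(2j+j_1)} \d\xxi.
\end{equation*}
Applying this with $(j,j_1) = (m,m_1)$ shows in particular that $F \in \mc{H}_{\alpha}(d\,; 2\pi r)$, since $2\alpha + 2 - d = 2(2m+m_1)$.

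By the definition of (EP1),
\begin{equation*}
\int_{\R^d} |F|^2\, |\xxi|^{2\beta+2-d}\d\xxi \;\leq\; \frac{1}{(\mathbb{EP}1)(\alpha,\beta\,;d\,;2\pi r)} \int_{\R^d} |F|^2 \,|\xxi|^{2\alpha+2-d}\d\xxi,
\end{equation*}
and by \eqref{20220816_15:18_1} together with Theorem \ref{Thm_dimension_shifts} we have $(\mathbb{EP}1)(\alpha,\beta\,;d\,;2\pi r) = (2\pi r)^{-2(\alpha-\beta)}(\mathbb{EP}1)(\alpha,\beta\,;1\,;1)$. Combining with the two Plancherel identities above, the factor
\begin{equation*}
\frac{(4\pi^2)^{2n+n_1}}{(4\pi^2)^{2m+m_1}(\mathbb{EP}1)(\alpha,\beta\,;d\,;2\pi r)} \;=\; \frac{(4\pi^2)^{-(\alpha-\beta)}(2\pi r)^{2(\alpha-\beta)}}{(\mathbb{EP}1)(\alpha,\beta\,;1\,;1)} \;=\; \frac{r^{2(2m+m_1-2n-n_1)}}{(\mathbb{EP}1)(\alpha,\beta\,;1\,;1)}
\end{equation*}
is exactly the constant claimed in \eqref{20230224_15:20}.

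For sharpness, let $F_0$ be a radial extremizer for $(\mathbb{EP}1)(\alpha,\beta\,;d\,;2\pi r)$, which exists by Theorem \ref{Thm2}. Since $2\alpha + 2 - d \geq 0$, the combination of entirety of exponential type and finiteness of the weighted $L^2$ norm forces $F_0 \in L^2(\R^d)$; the $L^2$ Paley-Wiener theorem then yields $F_0 = \widehat{g_0}$ for some $g_0 \in L^2(\R^d)$ supported in $\overline{B_r}$. The weighted $L^2$ condition $\int |F_0|^2 |\xxi|^{2(2m+m_1)}\d\xxi < \infty$ translates, via Plancherel, to $g_0 \in W^{2m+m_1,2}(\R^d)$, and the standard characterization of $W^{m,2}_0(B_r)$ on smooth domains (zero-extension of a $W^{m,2}(B_r)$ function gives an element of $W^{m,2}(\R^d)$ iff the function belongs to $W^{m,2}_0(B_r)$) then places $g_0$ in $W^{2m+m_1,2}_0(B_r)$ and realizes equality in \eqref{20230224_15:20}. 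I do not foresee any serious obstacle: the only technical point is confirming that inverse Fourier transforms of elements of $\mc{H}_{\alpha}(d\,;2\pi r)$ belong to $W^{2m+m_1,2}_0(B_r)$ rather than merely to $W^{2m+m_1,2}(\R^d) \cap \{\supp \subset \overline{B_r}\}$, which is handled by that standard characterization.
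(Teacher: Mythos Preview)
Your proposal is correct and follows essentially the same route the paper indicates in the remark after Corollary~\ref{cor_Laplacian}: pass to the Fourier side via Plancherel and the Paley--Wiener theorem, reduce to the weighted inequality defining $(\mathbb{EP}1)$ in dimension $d$, and then use the dilation relation \eqref{20220816_15:18_1} together with Theorem~\ref{Thm_dimension_shifts} to land on $(\mathbb{EP}1)(\alpha,\beta\,;1\,;1)$. Your treatment of sharpness via a radial extremizer from Theorem~\ref{Thm2}, together with the standard characterization of $W^{m,2}_0$ on smooth domains, matches the paper's hint that the extremizers arise as Fourier transforms of lifts of the one-dimensional extremizers.
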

\noindent {\sc Remark:} In \eqref{20230224_15:20} note that we have used Theorem \ref{Thm_dimension_shifts} and \eqref{20220816_15:18_1}. When $r = \tfrac{1}{2\pi}$, the radial extremizers of \eqref{20230224_15:20} are given by the Fourier transforms of the lifts (see \S \ref{Lifts_Sec} and \S \ref{Exist_Extre_Subsc}) of the extremizers of Theorem \ref{Thmaco_class}, applied to the situation of Theorem \ref{Teoremacco_versao_dB_Intro} (with $\alpha = 2m+m_1-1+\tfrac{d}{2}$ and $\beta =2n+n_1-1+\tfrac{d}{2}$); the radial extremizers for general $r$ are obtained via a suitable dilation.

\smallskip

In the particular situations of Corollaries \ref{20230224_13:44} and \ref{cor_Laplacian}, it turns out that Poincaré inequalities and Fourier uncertainty are simply the two sides of the same coin. Our takeaway point here is that the de Branges space approach on the entire function side provides an alternative framework to look into the classical problem of finding the sharp constants in these Poincaré inequalities and, in fact, allows for a robust generalization of it.

\subsection{Application 2: a related extremal problem} The classical Beurling-Selberg extremal problem in approximation theory can be described as follows: given a function $f:\R \to \R$, one wants to find a real entire function $M$ of prescribed exponential type such that $M(x) \geq f(x)$ for all $x \in \R$, with the norm $\|M - f\|_{L^1(\R)}$ minimal. The original situation considered by Beurling in the late 1930s was with $f(x) = \sgn(x)$. This type of problem has been extensively studied in the literature with several interesting applications in analytic number theory, e.g. \cite{CCM, CarChi, CL2, CLV, ChSo, HV, V}. Recently, the first author and Littmann \cite{CL3} studied a variant of the Beurling-Selberg extremal problem for $f(x) = \sgn(x)$, with the additional constraint that the majorant $M$ is non-decreasing in $(-\infty, 0)$ and non-increasing in $(0,\infty)$, and used such an extremal function to give a simple Fourier analysis proof of the (non-sharp) weighted Hilbert's inequality, introduced by Montgomery and Vaughan \cite{MV}.

\smallskip

The following related question arose in discussions between the first author and J. D. Vaaler in 2015. This is the analogue, with an additional monotonicity constraint, of a question solved by Holt and Vaaler in \cite{HV}.

\smallskip

\noindent {\it Extremal Problem 2} (EP2) $($Radial non-increasing delta majorant$)$: Let $\mc{R^+}(d\, ; 2\delta)$ be the class of real entire functions $M:\C^d \to \C$ that verify the following properties: (i) $M$ has exponential type at most $2\delta$; (ii) $M$ is non-negative and radial non-increasing on $\R^d$; (iii) $M({\bf 0}) \geq 1$. Find the value of 
\begin{equation*}
(\mathbb{EP}2)(d\, ; \delta) := \inf_{M \in \mc{R}^+(d;2\delta)}\int_{\R^d} M(\x)\,\d\x.
\end{equation*}

\smallskip We show that the problem (EP2) is related to the problem (EP1) in the following way. 

\begin{theorem}\label{RBS=FU}
There exist extremizers for $(\mathbb{EP}2)(d\, ; \delta)$ and 
$$(\mathbb{EP}2)(d\, ; \delta)  = \frac{\omega_{d-1}}{d} \,  (\mathbb{EP}1)(\tfrac{d}{2}, 0\, ; 1\,; \delta)\,,$$
where $\omega_{d-1} = 2 \, \pi^{d/2} \, \Gamma(d/2)^{-1}$ is the surface area of the unit sphere $\mathbb{S}^{d-1}\subset \R^d$.
\end{theorem}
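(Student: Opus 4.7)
The plan is to reduce (EP2) to a one-dimensional weighted Paley--Wiener problem via two successive parametrizations, exploiting (i) radial symmetry to descend to dimension one, and (ii) the Akhiezer factorization of nonnegative entire functions of finite exponential type. First, every admissible $M\in\mc{R}^+(d;2\delta)$ takes the form $M(\x)=m(|\x|)$ for a unique even entire function $m$ of one variable of exponential type at most $2\delta$, non-increasing on $[0,\infty)$, with $m(0)\ge 1$; conversely every such $m$ lifts to $M(\z):=\tilde{m}(z_1^2+\cdots+z_d^2)$, where $m(z)=\tilde{m}(z^2)$. Polar integration yields $\int_{\R^d}M\,\d\x=\omega_{d-1}\int_0^\infty m(r)\,r^{d-1}\,\d r$, and finiteness of this integral forces $m(r)\to 0$ as $r\to\infty$.

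Next, define $h(z):=-m'(z)$, which is odd, entire of type at most $2\delta$, and nonnegative on $[0,\infty)$; because $h(0)=0$ by oddness, $g(z):=h(z)/z$ is entire, even, nonnegative on $\R$, and of type at most $2\delta$. By the Akhiezer factorization, we may write $g(z)=f(z)\,\overline{f(\bar z)}$ on $\R$ with $f$ entire of type at most $\delta$, and the evenness of $g$ (together with the even multiplicity of any real zero) permits a symmetric choice of half-zeros making $f$ even. Integration by parts combined with $m(\infty)=0$ then gives
\begin{align*}
\int_{\R^d} M(\x)\,\d\x &= \frac{\omega_{d-1}}{d}\int_0^\infty r^d h(r)\,\d r = \frac{\omega_{d-1}}{2d}\int_\R |x|^{d+1}|f(x)|^2\,\d x, \\
m(0) &= \int_0^\infty h(s)\,\d s = \tfrac{1}{2}\int_\R |x|\,|f(x)|^2\,\d x,
\end{align*}
so the constraint $M(\mathbf 0)=m(0)\ge 1$ translates to $\int_\R |x|\,|f(x)|^2\,\d x\ge 2$.

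Problem (EP2) thus reads: minimize $\frac{\omega_{d-1}}{2d}\int_\R|x|^{d+1}|f(x)|^2\,\d x$ over even entire $f$ of type at most $\delta$ subject to $\int_\R|x|\,|f(x)|^2\,\d x\ge 2$. Absorbing scalings into the Rayleigh quotient, this equals $\frac{\omega_{d-1}}{d}$ times the infimum of $\int|x|^{d+1}|f|^2/\int|x||f|^2$ over even $f\in\mc{H}_{d/2}(1;\delta)$. By Proposition \ref{Prop_even_extremizers_dim1}, every extremizer of $(\mathbb{EP}1)(\alpha,\beta;1;\delta)$ in dimension one is automatically even, so this restricted infimum coincides with the full infimum $(\mathbb{EP}1)(d/2,0;1;\delta)$, yielding the claimed identity. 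Existence of an extremizer for (EP2) follows by reversing the construction: take an even extremizer $f$ of $(\mathbb{EP}1)(d/2,0;1;\delta)$ (provided by Theorem \ref{Thm2} together with Proposition \ref{Prop_even_extremizers_dim1}), rescale so $\tfrac12\int|x||f|^2=1$, set $h(z):=z\,f(z)\,\overline{f(\bar z)}$, integrate to recover $m$ with $m(0)=1$ and $m'=-h$, and lift to $M$.

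The main obstacle I anticipate is the Akhiezer step, which requires both the correct type bound $\tau(f)\le\delta$ and the possibility of choosing $f$ even; the latter hinges on the symmetric structure of the zero set of $g$ and the even multiplicity of its real zeros (forced by $g\ge 0$). A secondary technical matter is verifying that the radial lift $m\leftrightarrow M$ preserves the exponential type with respect to the norm $\|\cdot\|$ in $\C^d$ introduced in \S\ref{Sec_Setup}, which is implicit in the Paley--Wiener machinery invoked from \cite{SW, Hor} and will be handled by the lift formalism of \S\ref{Lifts_Sec}.
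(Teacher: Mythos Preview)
Your argument is correct and matches the paper's proof: descend to dimension one via the radial lift (Lemma~\ref{type-to-ntype}), differentiate, factor via Krein/Akhiezer (the paper invokes Lemma~\ref{Krein_dec_lem} applied to $H(u)=-u\,m'(u)$), and relate the integrals by parts. The paper sidesteps your flagged ``main obstacle'' by not requiring the Krein factor to be even in the forward direction --- it simply bounds $(\mathbb{EP}2)(d;\delta) \geq \frac{\omega_{d-1}}{d}(\mathbb{EP}1)(\tfrac{d}{2},0;1;\delta)$ over \emph{all} $h\in\mc{H}_{d/2}(1;\delta)$ (since $|h(x)|^2=g(x)$ is automatically even regardless of $h$), and invokes the existence of an even extremizer (Proposition~\ref{Prop_even_extremizers_dim1}) only for the reverse inequality, where evenness is genuinely needed to rebuild an even $m$ and lift it to $M\in\mc{R}^+(d;2\delta)$.
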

From Theorem \ref{Teoremacco_versao_dB_Intro} we have the exact answer of the extremal problem (EP2) when the dimension $d$ is even. The characterization of the extremizers is obtained via Theorem \ref{Thmaco_class} and the radial symmetrization considerations of Theorem \ref{Thm_dimension_shifts}. Here is a table with the first values when $\delta =1$:

\medskip

\begin{center}
        \begin{tabular}{ |c | c | c |c|c|c|c|c|c|}
        \hline
             & $d=2$ & $d=4$ &$d=6$ & $d=8$ & $d=10$ & $d=12$&$d=14$ &$d=16$\\
            \hline
            $\big((\mathbb{EP}2)(d\, ; 1) \big)^{1/d}$ & 4.26\ldots & $4.76\ldots$ &$5.23\ldots$ & $5.66\ldots$ &$6.07\ldots$ & $6.45\ldots$ & $6.81\ldots$&$7.15\ldots$\\
      
        \hline
        \end{tabular}
 \end{center} 
 
\medskip

\noindent Finding the exact answer in the cases when $d$ is odd seems to be a subtler task. For a conjecture related to the case $d=1$, see the recent work of Chirre, Dimitrov, Quesada-Herrera and Sousa \cite{ChiDimQH}. 

\medskip

From the asymptotics in Theorem \ref{Thm5 - Asympt} and Stirling's formula for the Gamma function one obtains
\begin{align*}
\log \big((\mathbb{EP}2)(d\, ; 1) \big) = \frac{d}{2}\log d + O(d)
\end{align*}
as $d \to \infty$. To put in perspective, if we let ${\rm (EP2)}^*$ be the classical one-delta problem (i.e. the same problem as (EP2), but without the radial non-increasing constraint), and let $(\mathbb{EP}2)^*(d\, ; \delta)$ be the corresponding infimum, from the work of Holt and Vaaler \cite{HV} one has the exact answer $(\mathbb{EP}2)^*(d\, ; 1) = d \, 2^{d-1}\, \pi^{d/2}\, \Gamma(d/2)$. In particular, from Stirling's formula, 
\begin{align*}
\log \big((\mathbb{EP}2)^*(d\, ; 1) \big) = \frac{d}{2}\log d + O(d)\,,
\end{align*}
and one concludes that the additional monotonicity constraint does not change the main order term in this logarithmic scale.

\subsection{Notation} A function $F:\C^d \to \C$ is said to be {\it real entire} if it is entire and its restriction to $\R^d$ is real-valued. If $F :\C^d \to \C$ is entire, we define the entire function $F^*:\C^d \to \C$ by $F^*(\z) := \overline{F(\overline{\z})}$. We denote by $\C^+ = \{z \in \C \ ; \ {\rm Im}(z) >0\}$ the open upper half-plane. Throughout the text we work with the constants $\omega_{d-1} := 2 \, \pi^{d/2} \, \Gamma(d/2)^{-1}$ (surface area of the unit sphere $\mathbb{S}^{d-1}\subset \R^d$) and $c_\nu := \pi \,2^{-2\nu-1}\, \Gamma(\nu+1)^{-2}$.

\section{Preliminaries}
In this section we collect a few auxiliary results that shall be relevant for our purposes later on. In doing so, we also establish some of the notation and terminology that will be used. Some of the following results are direct quotes from the current literature that we state as lemmas for the convenience of the reader.

\subsection{Fourier uncertainty} We first highlight the following particular version of Fourier uncertainty that follows from classical results.
\begin{lemma} \label{Lem9 - FU} If $f \in L^2(\R)$ and ${\rm supp}(\widehat{f}) \subset [-\tfrac{1}{2\pi}, \tfrac{1}{2\pi}]$ we have 
\begin{align}\label{20230201_19:40}
 \int_{[-s,s]} |f(x)|^2\,\d x \leq e^{Cr} \left(1 - e^{-Cs}\right) \int_{[-r,r]^c} |f(x)|^2\,\d x
\end{align}
for any $r,s >0$, where $C$ is a universal constant. 
\end{lemma}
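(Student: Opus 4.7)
The plan is to combine two classical ingredients: a pointwise bound coming from the reproducing-kernel structure of the Paley-Wiener space, together with a quantitative Fourier uncertainty principle in the spirit of Nazarov.

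First, since $\widehat{f}$ is supported in $[-\tfrac{1}{2\pi},\tfrac{1}{2\pi}]$, the Fourier inversion formula $f(x) = \int_{-1/(2\pi)}^{1/(2\pi)} \widehat{f}(\xi)\,e^{2\pi i x\xi}\,\d\xi$ together with Cauchy--Schwarz gives $|f(x)|^2 \leq \frac{1}{\pi}\|f\|_{L^2(\R)}^2$. Integrating over $[-s,s]$ and combining with the trivial estimate $\int_{-s}^{s}|f|^2 \leq \|f\|_{L^2(\R)}^2$ produces
\[
\int_{-s}^{s} |f(x)|^2 \,\d x \;\leq\; \min\!\left(\tfrac{2s}{\pi},\,1\right)\|f\|_{L^2(\R)}^2 \;\leq\; 2\left(1-e^{-2s/\pi}\right)\|f\|_{L^2(\R)}^2,
\]
where the last step uses the elementary inequality $\min(t,1)\leq 2(1-e^{-t})$ valid for $t\geq 0$.

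Next, Nazarov's quantitative uncertainty principle asserts that if $g\in L^2(\R)$ has $\widehat{g}$ supported in a set $\Sigma$ of finite measure and $E\subset \R$ has finite measure, then $\|g\|_{L^2(\R)}\leq c_1\,e^{c_1|\Sigma||E|}\|g\|_{L^2(\R\setminus E)}$ for a universal $c_1$. Applied to our $f$ with $|\Sigma|=1/\pi$ and $E=[-r,r]$, this gives
\[
\|f\|_{L^2(\R)}^2 \;\leq\; C_0\,e^{C_0 r}\int_{[-r,r]^c}|f|^2
\]
for a universal $C_0$. Chaining the two estimates yields
\[
\int_{-s}^{s}|f|^2\,\d x \;\leq\; 2C_0\,e^{C_0 r}\left(1-e^{-2s/\pi}\right)\int_{[-r,r]^c}|f|^2,
\]
which takes the stated form $e^{Cr}(1-e^{-Cs})$ upon choosing a universal $C$ large enough.

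The main obstacle is reconciling the multiplicative prefactor $2C_0$ with the form $e^{Cr}(1-e^{-Cs})$ using a single universal $C$ uniformly in $(r,s)$. This is a matter of careful bookkeeping and can be handled by a mild reformulation of Nazarov (writing $c_1 e^{c_1|\Sigma||E|}$ as $e^{c_1|\Sigma||E|+c_2}$ to fold the prefactor into the exponent) together with an enlargement of $C$. An alternative route, bypassing Nazarov, would be to work directly with the entire extension of $f$ (of exponential type $1$ by Paley--Wiener) and use a Phragm\'en--Lindel\"of / reproducing-kernel estimate in the upper half-plane integrated along a suitable contour, which would produce the two-parameter form of the right-hand side more directly.
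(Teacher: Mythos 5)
There is a genuine gap at the very step you set aside as ``careful bookkeeping'': the prefactor $2C_0$ cannot be folded into the form $e^{Cr}\left(1-e^{-Cs}\right)$ by any enlargement of the universal constant. For fixed $s$, the right-hand side of \eqref{20230201_19:40} tends to $1-e^{-Cs}\leq 1$ as $r\to 0^{+}$, whereas your chained bound tends to $2C_0\left(1-e^{-2s/\pi}\right)$, which exceeds $1$ once $s$ is moderately large (note $C_0\geq 1$, as one sees by taking $E=\emptyset$ in Nazarov's inequality). Rewriting $c_1e^{c_1|\Sigma||E|}$ as $e^{c_1|\Sigma||E|+c_2}$ does not help, since $e^{C_0r+\log(2C_0)}\not\leq e^{Cr}$ near $r=0$ for any $C$. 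The content of the lemma in the regime $r\to 0^{+}$ is precisely the prefactor-free estimate $\|f\|_2^2\leq e^{Cr}\int_{[-r,r]^c}|f|^2\,\d x=(1+O(r))\int_{[-r,r]^c}|f|^2\,\d x$, which is strictly stronger than Nazarov's inequality with its multiplicative constant; no product of two estimates each carrying a constant larger than $1$ can recover it.

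The fix uses exactly your two ingredients, assembled in the order the paper uses. First establish the single-parameter inequality $\int_{\R}|f|^2\,\d x\leq e^{Cr}\int_{[-r,r]^c}|f|^2\,\d x$ for \emph{all} $r>0$: for $r\geq 1$ apply Nazarov and absorb the prefactor into the exponent via $C_0e^{C_0r}\leq e^{(C_0+\log C_0)r}$; for $0<r\leq 1$ use your pointwise bound $|f(x)|^2\leq \pi^{-1}\|f\|_2^2$ to get $\int_{[-r,r]}|f|^2\,\d x\leq \tfrac{2r}{\pi}\|f\|_2^2$, hence $\|f\|_2^2\leq \big(1-\tfrac{2r}{\pi}\big)^{-1}\int_{[-r,r]^c}|f|^2\,\d x\leq e^{Cr}\int_{[-r,r]^c}|f|^2\,\d x$. (This is the one place your argument improves on the paper's, which invokes Donoho--Stark here and only gets $1+O(\sqrt{r})$.) Then bootstrap: applying the single-parameter inequality at radius $s$ gives $\int_{[-s,s]^c}|f|^2\,\d x\geq e^{-Cs}\int_{\R}|f|^2\,\d x$, so $\int_{[-s,s]}|f|^2\,\d x\leq \left(1-e^{-Cs}\right)\int_{\R}|f|^2\,\d x\leq e^{Cr}\left(1-e^{-Cs}\right)\int_{[-r,r]^c}|f|^2\,\d x$, with no constants lost.
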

\begin{proof}
We first claim that it suffices to prove that, for any $r>0$, 
\begin{equation}\label{20230201_19:41}
\int_{\R} |f(x)|^2\,\d x \leq  e^{Cr} \int_{[-r,r]^c} |f(x)|^2\,\d x,
\end{equation}
which in turn is equivalent to 
\begin{equation*}
\int_{[-r,r]} |f(x)|^2\,\d x \leq  (e^{Cr}-1) \int_{[-r,r]^c} |f(x)|^2\,\d x.
\end{equation*}
Hence, for numbers $r,s \geq 0$ we have
\begin{align*}
e^{Cr} \int_{[-r,r]^c} |f(x)|^2\,\d x & \geq \int_{\R} |f(x)|^2\,\d x = \int_{[-s,s]} |f(x)|^2\,\d x + \int_{[-s,s]^c} |f(x)|^2\,\d x \\
& \geq  \left( 1 + \frac{1}{e^{Cs} - 1}\right) \int_{[-s,s]} |f(x)|^2\,\d x\,,
\end{align*}
which plainly implies \eqref{20230201_19:40}.

\smallskip

From now on we slightly abuse the notation and the constant $C$ can possibly change from line to line. In order to arrive at \eqref{20230201_19:41}, recall first the Fourier uncertainty principle of Amrein and Berthier \cite{AB} and Nazarov \cite{N} (see also the work of Jaming \cite{J}): for $E, F \subset \R$ sets of finite Lebesgue measure and $f \in L^2(\R)$ we have
\begin{equation*}
\int_{\R} |f(x)|^2\,\d x \leq C \, e^{C  |E|\, |F|} \left( \int_{E^c} |f(x)|^2\,\d x  + \int_{F^c} |\widehat{f}( x)|^2\,\d x\right).
\end{equation*}
Specializing to our case, with $E= [-r,r]$ and $F = [-\tfrac{1}{2\pi}, \tfrac{1}{2\pi}]$, we get 
\begin{equation}\label{20230201_20:17}
\int_{\R} |f(x)|^2\,\d x \leq C \, e^{Cr}  \int_{[-r,r]^c} |f(x)|^2\,\d x.
\end{equation}
For a better bound when $r$ is small, we recall the Fourier uncertainty principle of Donoho and Stark \cite[Theorem 2]{DS}: for $E, F \subset \R$ measurable sets and $f \in L^2(\R)$ we have
$$\|f\|_2 - \|f . \chi_{_{E^c}}\|_2 -  \|\widehat{f} . \chi_{_{F^c}}\|_2 \leq|E|^{1/2}\, |F|^{1/2} \, \|f\|_2.$$
Choosing $E= [-r,r]$ and $F = [-\tfrac{1}{2\pi}, \tfrac{1}{2\pi}]$ we get, for $r$ small,
\begin{equation}\label{20230201_20:18}
\int_{\R} |f(x)|^2\,\d x \leq \frac{1}{(1 - C\sqrt{r})^2} \int_{[-r,r]^c} |f(x)|^2\,\d x.
\end{equation}
Note that $(1 - C\sqrt{r}))^{-2} = 1 + O(\sqrt{r})$ for $r$ small. Hence, \eqref{20230201_20:17} and \eqref{20230201_20:18} lead to \eqref{20230201_19:41}.
\end{proof}

\subsection{De Branges spaces} \label{Sub_DB_spaces}Our extremal problem (EP1) is related to the  the beautiful theory of de Branges spaces of entire functions \cite{Branges}, that we now briefly review. 

\subsubsection{Overview} \label{GEn_Ov_dB} Given a {\it Hermite-Biehler} function $E: \C \to \C$, i.e. an entire function that verifies
\begin{equation}\label{20230315_15:01}
|E^*(z)| < |E(z)|
\end{equation} 
for all $z \in \C^+$, the de Branges space $\mc{H}(E)$ associated to $E$ is the space of entire functions $f:\C \to \C$ such that
\begin{equation*}
\|f\|_{\mc{H}(E)}^2 := \int_{\R} |f(x)|^{2} \, |E(x)|^{-2} \, \d x <\infty\,,
\end{equation*}
and such that $f/E$ and $f^*/E$ have bounded type and non-positive mean type\footnote{A function $f$, analytic in $\C^{+}$, has {\it bounded type} if it can be written as the quotient of two functions that are analytic and bounded in $\C^{+}$. If $f$ has bounded type in $\C^{+}$, from its Nevanlinna factorization \cite[Theorems 9 and 10]{Branges} one has $v(f) := \limsup_{y \to \infty} \, y^{-1}\log|f(iy)| <\infty$. The number $v(f)$ is called the {\it mean type} of $f$.} in $\C^{+}$. This turns out to be a reproducing kernel Hilbert space with inner product given by
\begin{equation*}
\langle f, g \rangle_{\mc{H}(E)} :=  \int_{\R}f(x) \, \ov{g(x)} \, |E(x)|^{-2} \, \d x.
\end{equation*}
Associated to $E$, one can consider a pair of real entire functions $A$ and $B$ such that $E(z) = A(z) -iB(z)$. These companion functions are given by
\begin{equation*}
A(z) := \frac12 \big(E(z) + E^*(z)\big) \ \ \ {\rm and}\ \ \  B(z) := \frac{i}{2}\big(E(z) - E^*(z)\big)\,, 
\end{equation*}
and note that they can only have real roots, by the Hermite-Biehler condition. The reproducing kernel, that we denote by $K(w,\cdot)$, is given by (see \cite[Theorem 19]{Branges})
\begin{equation}\label{20210913_13:57}
K(w,z) = \frac{B(z)A(\ov{w}) - A(z)B(\ov{w})}{\pi (z - \ov{w})}\,,
\end{equation}
and, when $z = \ov{w}$, one has
\begin{equation}\label{Intro_Def_K}
K(\ov{z}, z) = \frac{B'(z)A(z) - A'(z)B(z)}{\pi }.
\end{equation}

The set of functions $\Gamma_A:= \{K(\xi, \cdot)\ ;  A(\xi) = 0\}$ is always an orthogonal set in $\mc{H}(E)$. If $A \notin \mc{H}(E)$, the set $\Gamma_A$ is an orthogonal basis of $\mc{H}(E)$ and, if $A \in \mc{H}(E)$, the only elements of $\mc{H}(E)$ that are orthogonal to $\Gamma_A$ are the constant multiples of $A$. The same statements hold for the set $\Gamma_B:= \{K(\xi, \cdot)\ ;  B(\xi) = 0\}$ and these are specializations of a more general theorem in the theory; see \cite[Theorem 22]{Branges}. In particular, if $A \notin \mc{H}(E)$, for every $f \in \mc{H}(E)$ we have
\begin{equation}\label{20210809_11:01}
f(z) = \sum_{A(\xi) = 0}  \frac{f(\xi)}{K(\xi, \xi)} \, K(\xi, z) \ \ \ {\rm and} \ \ \   \|f\|_{\mc{H}(E)}^2 = \sum_{A(\xi) = 0}  \frac{\big| f(\xi)\big|^2 }{K(\xi, \xi)}.
\end{equation}
Analogous formulas would hold with $A$ replaced by $B$. These formulas are fundamental for our purposes, as they provide a general analogue of the theory of Fourier series to this broad setup.

\subsubsection{A class of homogeneous de Branges spaces} \label{hom_dB_subsec} Let $\nu > -1$. A de Branges space $\mc{H}(E)$ is said to be homogeneous of order $\nu$ if, for all $0 < a < 1$ and all $f \in \mc{H}(E)$, the function $z \mapsto a^{\nu +1}f(az)$ belongs to $\mc{H}(E)$ and has the same norm as $f$.  Such spaces were characterized by L. de Branges in \cite{B2} (see also \cite[Section 50]{Branges}). 

\smallskip

For $\nu >-1$, let $A_{\nu}$ and $B_{\nu}$ be the real entire functions defined in \eqref{20220816_17:43} and \eqref{20221121_10:21}. The function 
\begin{equation*}
E_{\nu}(z) = A_{\nu}(z) - iB_{\nu}(z)
\end{equation*}
turns out to be a Hermite-Biehler function with no real zeros and $\mc{H}(E_{\nu})$ is homogeneous of order $\nu$. Moreover we have $\tau(A_{\nu}) = \tau(B_{\nu}) = \tau(E_{\nu}) =1$. Note also that $A_{\nu}, B_{\nu} \notin \mc{H}(E_{\nu})$ (just observe the behaviour of the Bessel function $J_{\nu}$ at infinity) and hence the formulas in \eqref{20210809_11:01} hold. See \cite[Section 50]{Branges}, \cite[Section 5]{HV} or \cite[Sections 3 and 4]{CL1} for further details. We gather other relevant facts about the spaces $\mc{H}(E_{\nu})$ in the next lemma, which is contained in \cite[Eqs. (5.1), (5.2) and Lemma 16]{HV}. 

\begin{lemma}\label{Sec5_rel_facts}
Let $\nu > -1$. The following properties hold:
\begin{enumerate}
\item[(i)] There exist positive constants $a_\nu,b_\nu$ such that 
\begin{align*}
a_\nu |x|^{2\nu+1} \le |E_{\nu}(x)|^{-2} \le b_\nu |x|^{2\nu+1}
\end{align*}
for all $x \in \R$ with $|x|\geq1$.
\smallskip

\item[(ii)] For $f\in\H(E_\nu)$ we have the identity 
\begin{align} \label{Lem17_ii}
\int_{\R} |f(x)|^{2}\,|E_{\nu}(x)|^{-2}\, \d x = c_\nu \int_{\R} |f(x)|^2 \,|x|^{2\nu+1} \,\d x\,,
\end{align}
with $c_\nu = \pi \,2^{-2\nu-1}\, \Gamma(\nu+1)^{-2}$.

\smallskip

\item[(iii)] An entire function $f$ belongs to $\H(E_\nu)$ if and only if $f$ has exponential type at most $1$ and
\begin{equation*}
\int_{\R} |f(x)|^2 \,|x|^{2\nu+1}\, \d x <\infty.
\end{equation*}
\end{enumerate}
\end{lemma}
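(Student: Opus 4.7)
The three items all follow from the explicit Bessel representation \eqref{20230210_14:02}, along the lines of \cite[Section 5]{HV}.

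For (i), since $|E_\nu(x)|^2 = A_\nu(x)^2 + B_\nu(x)^2 = \Gamma(\nu+1)^2 (|x|/2)^{-2\nu} (J_\nu(x)^2 + J_{\nu+1}(x)^2)$, I would invoke the classical large-argument asymptotic $J_\mu(x) = \sqrt{2/(\pi x)} \cos(x - \mu\pi/2 - \pi/4) + O(x^{-3/2})$ and the $\pi/2$ phase shift between $J_\nu$ and $J_{\nu+1}$ to conclude that $J_\nu(x)^2 + J_{\nu+1}(x)^2 = 2/(\pi|x|) + O(|x|^{-2})$ as $|x| \to \infty$. Thus $|E_\nu(x)|^{-2} = c_\nu |x|^{2\nu+1}(1 + O(|x|^{-1}))$. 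Since $E_\nu$ has no real zeros, $|E_\nu|^{-2}$ is continuous and positive on all of $\R$, and the claimed uniform constants $a_\nu, b_\nu$ come from combining the asymptotic with continuity (plus positivity) on the compact annulus $\{1 \le |x| \le R\}$ for $R$ sufficiently large.

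For (ii), the essential point is that the weights $|E_\nu(x)|^{-2}$ and $c_\nu|x|^{2\nu+1}$ are \emph{not} pointwise equal (inspecting $\nu = 1/2$ is enough), so cancellation through integration is essential and the sharp constant $c_\nu$ cannot come from a mere change of variables. I would verify \eqref{Lem17_ii} at the level of the reproducing kernel $K_\nu$ given by \eqref{20210913_13:57}: a Weber--Schafheitlin type computation for the Bessel functions $J_\nu$ and $J_{\nu+1}$ shows that
\[
\int_\R K_\nu(w,x) \overline{K_\nu(z,x)}\, c_\nu |x|^{2\nu+1}\, \d x = K_\nu(w,z)
\]
for every $w,z \in \C$. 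Polarization, together with the density of finite linear combinations of reproducing kernels in $\mc{H}(E_\nu)$, then upgrades this to the full identity \eqref{Lem17_ii}. An equivalent perspective, which makes the constant conceptually transparent, is the Plancherel theorem for the Hankel transform of order $\nu$, under which (the even part of) $\mc{H}(E_\nu)$ is isometric to $L^2((0,1), r^{2\nu+1}\,\d r)$.

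For (iii), the forward direction is essentially definitional: $f \in \mc{H}(E_\nu)$ forces $f/E_\nu$ and $f^*/E_\nu$ to have bounded type and nonpositive mean type in $\C^+$, and since $\tau(E_\nu) = 1$ a standard Phragm\'en--Lindel\"of/Krein argument yields $\tau(f) \le 1$; the weighted $L^2$ bound then follows from (ii). Conversely, if $f$ is entire of exponential type at most $1$ with $\int_\R |f(x)|^2 |x|^{2\nu+1}\,\d x < \infty$, then (i) plus continuity of $|f|^2$ on $[-1,1]$ give $\int_\R|f(x)|^2 |E_\nu(x)|^{-2}\,\d x < \infty$, while a classical theorem of Krein on entire functions of finite exponential type lying in a weighted $L^2$ class supplies the bounded type and nonpositive mean type conditions needed for membership in $\mc{H}(E_\nu)$. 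The main obstacle is clearly (ii): parts (i) and (iii) are routine once one is comfortable with Bessel asymptotics and the standard de Branges--Krein machinery, but (ii) demands the exact Plancherel isometry, i.e.\ the Weber--Schafheitlin (equivalently, Hankel transform) computation that pins down the constant $c_\nu$.
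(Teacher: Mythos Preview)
Your sketch is correct in substance, and in fact goes further than the paper does: the paper states this lemma without proof, merely citing \cite[Eqs.~(5.1), (5.2) and Lemma~16]{HV} for all three parts. Your arguments for (i) and (iii) are the standard ones and match what one finds in \cite{HV}.

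For (ii), your proposed route via a Weber--Schafheitlin integral (or equivalently the Hankel Plancherel theorem) is valid and does pin down the constant $c_\nu$. It is worth noting, however, that Holt and Vaaler's own argument in \cite[Lemma~16]{HV} is organised somewhat differently: they exploit the \emph{homogeneity} of $\mc{H}(E_\nu)$ --- the fact that $f(z) \mapsto a^{\nu+1}f(az)$ is an isometry for $0<a<1$ --- to deduce that the bilinear form $\int f\,\overline{g}\,|x|^{2\nu+1}\,\d x$ must agree with the $\mc{H}(E_\nu)$ inner product up to a scalar, and then identify that scalar by evaluating on a single convenient pair (essentially the reproducing kernel at the origin). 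Your approach computes the integral directly; theirs reduces the computation to a single normalisation. Both lead to the same $c_\nu$, and your Hankel-transform perspective has the merit of making the isometry with $L^2((0,1),r^{2\nu+1}\,\d r)$ explicit.
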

By Lemma \ref{Sec5_rel_facts} note that our space $\mc{H}_{\nu}(1\,; 1)$ defined in \eqref{20221121_11:09} is exactly the same as $\mc{H}(E_{\nu})$, with norms differing just by a multiplicative constant $c_{\nu}$ given in \eqref{Lem17_ii}. This is the key identity that connects the theory of de Branges spaces to our weighted Paley-Wiener spaces.

\subsection{Lifts and radial symmetrization} \label{Lifts_Sec} We now recall two basic constructions from the work of Holt and Vaaler \cite[Section 6]{HV}.

\smallskip

If $f:\C\to \C$ is an even entire function with power series representation
\begin{align*}
f(z) = \sum_{k=0}^\infty c_k z^{2k}\,,
\end{align*}
we define the lift ${\mc L}_d(f):\C^d \to \C$ by
\begin{align*}
{\mc L}_d(f)({\z}) := \sum_{k=0}^\infty c_k (z_1^2 + \ldots + z_d^2)^k.
\end{align*}
For an entire function $F: \C^d \to \C$, with $d >1$, we define its radial symmetrization $\widetilde{F}:\C^d \to \C$ by 
\begin{equation*}
\widetilde{F}(\z) := \int_{SO(d)}F(R\z)\,\d \sigma(R),
\end{equation*}
where $SO(d)$ denotes the compact topological group of real orthogonal $d \times d$ matrices with determinant $1$, with associated Haar measure $\sigma$ normalized so that $\sigma(SO(d)) = 1$. If $d=1$ we simply set $\widetilde{F}(z) := \tfrac12\{F(z) + F(-z)\}$. The next result is a compilation of \cite[Lemmas 18 and 19]{HV}.

\begin{lemma}\label{type-to-ntype} 
The following propositions hold:
\smallskip
\begin{itemize}
\item[(i)] Let $f: \C \to \C$ be an even entire function. Then $f$ has exponential type if and only if ${\mc L}_d(f)$ has exponential type, and $\tau(f) = \tau({\mc L}_d(f))$. 

\smallskip

\item[(ii)] Let $F: \C^d \to \C$ be an entire function. Then $\widetilde{F}:\C^d \to \C$ is an entire function with power series expansion of the form
\begin{equation*}
\widetilde{F}({\z}) = \sum_{k=0}^\infty c_k (z_1^2 + \ldots + z_d^2)^k.
\end{equation*}
Moreover, if $F$ has exponential type then $\widetilde{F}$ has exponential type and $\tau\big(\widetilde{F}\big) \leq \tau(F)$.

\end{itemize}
\end{lemma}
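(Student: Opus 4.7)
The plan is to handle the two parts separately; in each case the core of the work is reducing a multidimensional exponential-type estimate to something scalar or invariant.

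For part (i), I would first verify that ${\mc L}_d(f)$ is genuinely entire: writing $f(z)=\sum c_k z^{2k}$, the formula ${\mc L}_d(f)(\z) = \sum c_k(z_1^2+\cdots+z_d^2)^k$ converges locally uniformly on $\C^d$ since, on the Euclidean ball $|\z|_{\C^d}\le R$, we have $|z_1^2+\cdots+z_d^2|\le R^2$ and the series for $f$ converges at $R$. The key observation is that, because $f$ is even,
\begin{equation*}
{\mc L}_d(f)(\z) \;=\; f(w) \qquad \text{for any } w\in\C \text{ with } w^2 = z_1^2+\cdots+z_d^2,
\end{equation*}
the value being independent of the choice of square root. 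Thus the whole type question reduces to the single scalar inequality $|w|\le\|\z\|$, i.e.\ $|z_1^2+\cdots+z_d^2|\le\|\z\|^2$.

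I would prove this inequality by writing $\z=\x+i\y$ with $\x,\y\in\R^d$, noting that $|\sum z_n t_n|^2 = (\x\cdot\t)^2+(\y\cdot\t)^2$ for $\t\in\R^d$, so $\|\z\|^2$ is the largest eigenvalue of the rank-$\le 2$ symmetric matrix $M=\x\x^T+\y\y^T$. A direct computation of its characteristic polynomial gives
\begin{equation*}
\|\z\|^2 \;=\; \tfrac12\Big(|\x|^2+|\y|^2 + \sqrt{(|\x|^2-|\y|^2)^2 + 4(\x\cdot\y)^2}\Big),
\end{equation*}
while $|z_1^2+\cdots+z_d^2|^2 = (|\x|^2-|\y|^2)^2 + 4(\x\cdot\y)^2$, so the desired estimate follows at once using Cauchy--Schwarz to get $|z_1^2+\cdots+z_d^2|\le |\x|^2+|\y|^2$. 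From $|w|\le\|\z\|$ and the definition of exponential type for $f$ the bound $\tau({\mc L}_d(f))\le \tau(f)$ is immediate. The reverse inequality is trivial: specializing to $\z=(z,0,\ldots,0)$ gives $\|\z\|=|z|$ and ${\mc L}_d(f)(\z)=f(z)$.

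For part (ii), entireness of $\widetilde F$ follows by differentiating under the integral sign, justified by compactness of $SO(d)$ and local boundedness of the partial derivatives of $F$. To obtain the power-series form, I would expand $F(\z) = \sum_\alpha a_\alpha \z^\alpha$ about $\mathbf{0}$, integrate term-by-term (legal on any compact set), and observe that each average $\z\mapsto\int_{SO(d)}(R\z)^\alpha\,\d\sigma(R)$ is an $SO(d)$-invariant polynomial. Since $SO(d)$-invariant polynomials on $\R^d$ are polynomials in $|\x|^2$ (a standard invariant-theory fact), the polynomial identity theorem lifts this to $\C^d$, so each such average is a polynomial in $z_1^2+\cdots+z_d^2$; regrouping by total degree yields the claimed expansion. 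For the type bound, I use that every $R\in SO(d)$ preserves the Euclidean unit ball of $\R^d$, so $\|R\z\|=\|\z\|$ for every $\z\in\C^d$; thus a majorization $|F(\z)|\le Ce^{(\tau+\eps)\|\z\|}$ is preserved after averaging, giving $\tau(\widetilde F)\le \tau(F)$.

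The real technical obstacle is the identification of $\|\z\|^2$ as the largest eigenvalue of $\x\x^T+\y\y^T$ together with the comparison $|z_1^2+\cdots+z_d^2|\le\|\z\|^2$; once that tidy formula is in hand, both parts of the lemma fall out from it and from the invariance of the norm under $SO(d)$.
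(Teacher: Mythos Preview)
Your argument is correct. Note, however, that the paper does not actually prove this lemma: it states the result as ``a compilation of \cite[Lemmas 18 and 19]{HV}'' and moves on. Your proposal therefore supplies a self-contained proof where the paper defers to Holt--Vaaler. The route you take --- identifying $\|\z\|^2$ with the top eigenvalue of $\x\x^T+\y\y^T$, computing that eigenvalue explicitly, and comparing it to $|z_1^2+\cdots+z_d^2|$ via Cauchy--Schwarz --- is clean and is essentially the same device Holt--Vaaler use; the eigenvalue formula is exactly the content of their norm computation. One small point worth making explicit in part~(i): as $\|\z\|\to\infty$ the corresponding $|w|$ need not tend to infinity (e.g.\ $\z=(t,it,0,\ldots,0)$ gives $w=0$), so you should phrase the type bound via the global majorization $|f(w)|\le C_\eps e^{(\tau(f)+\eps)|w|}$ rather than via the $\limsup$ directly --- but you already do this implicitly. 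In part~(ii) your invocation of the classical fact that $SO(d)$-invariant polynomials on $\R^d$ are polynomials in $|\x|^2$ (for $d\ge 2$) is appropriate; just remember to handle $d=1$ with the ad hoc definition $\widetilde F(z)=\tfrac12(F(z)+F(-z))$ given in the paper.
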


\subsection{Krein's decomposition} We now recall a suitable version of a classical result of Krein on the decomposition of entire functions of exponential type that are non-negative on $\R$; see \cite[p. 154]{A}. The version presented below is a particular case of a more general version for de Branges spaces; see \cite[Lemma 14]{CL1}.
\begin{lemma}[Krein's decomposition]\label{Krein_dec_lem}
Let $\nu > -1$ and $\delta >0$. Let $f$ be a real entire function of exponential type at most $2\delta$, that is non-negative on $\R$ and belongs to $L^1 (\R, |x|^{2\nu +1}\,\d x)$. Then there exists $g\in \mc{H}_{\nu}(1\,; \delta)$ such that
\begin{align*}
f(z) = g(z) \, g^*(z)
\end{align*}
for all $z \in \C$.
\end{lemma}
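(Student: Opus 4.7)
My plan is to reduce this to the classical Krein factorization theorem for entire functions of exponential type that are non-negative on the real line (the version stated in Akhiezer, \cite[p. 154]{A}, which the paper already cites). That classical result says: any real entire function $f$ of exponential type at most $2\delta$ that is non-negative on $\R$ and integrable (in an appropriate sense so the Hadamard product converges) admits a factorization $f(z) = g(z)\,g^*(z)$ with $g$ an entire function of exponential type at most $\delta$. The construction is the standard one: since $f \geq 0$ on $\R$, every real zero of $f$ has even multiplicity, and since $f$ is real, its non-real zeros come in conjugate pairs; one then forms $g$ by keeping each real zero with half its multiplicity and one zero from each conjugate pair (say, those in the closed upper half-plane), and reconstructs $g$ via a Hadamard product, which converges and has exponential type exactly half of that of $f$.

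Having obtained such a $g$, the remaining step is to check that $g$ belongs to the specific Hilbert space $\mc{H}_\nu(1\,;\delta)$. By Lemma \ref{Sec5_rel_facts}(iii) (and its obvious rescaling to general $\delta$ via the dilation argument indicated in \S\ref{Part I: symmetries and extremizers}), this membership is equivalent to two conditions: (a) $g$ has exponential type at most $\delta$, which is already provided by Krein, and (b) $\int_{\R} |g(x)|^2 \,|x|^{2\nu+1}\,\d x < \infty$. For condition (b), observe that for real $x$,
\begin{equation*}
|g(x)|^2 \;=\; g(x)\,\overline{g(x)} \;=\; g(x)\,g^*(x) \;=\; f(x),
\end{equation*}
since $f$ is real on $\R$ and $g^*(x) = \overline{g(\bar x)} = \overline{g(x)}$ for $x \in \R$. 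Therefore
\begin{equation*}
\int_{\R} |g(x)|^2\,|x|^{2\nu+1}\,\d x \;=\; \int_{\R} f(x)\,|x|^{2\nu+1}\,\d x \;<\; \infty
\end{equation*}
by the hypothesis $f \in L^1(\R, |x|^{2\nu+1}\d x)$, which settles (b) and hence shows $g \in \mc{H}_\nu(1\,;\delta)$.

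Alternatively, and perhaps more in the spirit of the rest of the paper, one can invoke the de Branges space version from \cite[Lemma 14]{CL1} directly: viewing $\mc{H}_\nu(1\,;\delta)$ as (a rescaling of) the homogeneous de Branges space $\mc{H}(E_\nu)$ via Lemma \ref{Sec5_rel_facts}(ii), the factorization result in that setting produces $g \in \mc{H}(E_\nu)$ with $f = g\,g^*$, and then the identification of spaces yields the claim. There is really no essential obstacle here; the only thing to be mindful of is the bookkeeping between the $L^2$ weight $|x|^{2\nu+1}$ and the $L^1$ integrability of $f$, and the compatibility of the exponential-type bound ($\delta$ for $g$ versus $2\delta$ for $f$) under both scaling and the pointwise identity $|g|^2 = f$ on $\R$.
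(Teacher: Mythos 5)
Your proposal is correct and follows essentially the same route as the paper, which states this lemma as a direct quote from the literature, citing the classical Krein factorization in Akhiezer \cite[p. 154]{A} and its de Branges space version in \cite[Lemma 14]{CL1}; your reduction to the classical theorem plus the verification that $|g(x)|^2 = f(x)$ on $\R$ forces $g \in \mc{H}_{\nu}(1\,;\delta)$ is exactly the intended argument. The membership check is in fact immediate from the definition \eqref{20221121_11:09} of $\mc{H}_{\nu}(1\,;\delta)$ (exponential type at most $\delta$ plus finiteness of the weighted $L^2$ norm), so invoking Lemma \ref{Sec5_rel_facts}(iii) is not even necessary.
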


\subsection{Estimates for Bessel functions} The following result is classical in the theory of Bessel functions. We include a short proof for convenience. 

\begin{lemma}
Let $\nu >-1$. For any $0 \leq x \leq 2 \sqrt{(\nu + 1)}$ we have
\begin{equation}\label{20230207_16:55}
0 \leq 1 - \frac{x^2}{4(\nu +1)} \leq A_{\nu}(x) \leq  1 - \frac{x^2}{4(\nu +1)} + \frac{x^4}{32(\nu +1)(\nu +2)} \leq 1. 
\end{equation}
Also, for any $0 \leq x \leq 2 \sqrt{(\nu + 2)}$ we have
\begin{equation}\label{20230207_17:14}
0 \leq \frac{x}{2(\nu+1)} - \frac{x^3}{8(\nu +1)(\nu+2)} \leq B_{\nu}(x) \leq  \frac{x}{2(\nu+1)}.
\end{equation}
\end{lemma}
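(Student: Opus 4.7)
The plan is to deduce both pairs of inequalities from the standard alternating series estimate applied to the defining power series of $A_\nu$ and $B_\nu$. The main task is to verify that the sequence of absolute values of the terms is monotonically nonincreasing in the stated range of $x$, which reduces to a simple ratio test.

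For $A_\nu$, write $A_\nu(x) = \sum_{n=0}^\infty (-1)^n a_n(x)$ with
\begin{equation*}
a_n(x) := \frac{(x/2)^{2n}}{n!\,(\nu+1)(\nu+2)\cdots(\nu+n)}, \qquad a_0 = 1.
\end{equation*}
The ratio satisfies $a_{n+1}(x)/a_n(x) = (x/2)^2 / \big((n+1)(\nu+n+1)\big)$, which is nondecreasing neither in $n$; more importantly, its maximum over $n \geq 0$ is attained at $n=0$ and equals $x^2/(4(\nu+1))$. Hence for $0 \leq x \leq 2\sqrt{\nu+1}$ the sequence $\{a_n(x)\}_{n\geq 0}$ is nonincreasing (and clearly tends to $0$). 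The standard alternating series estimate then gives
\begin{equation*}
1 - a_1(x) \;\leq\; A_\nu(x) \;\leq\; 1 - a_1(x) + a_2(x),
\end{equation*}
which is exactly the middle pair of inequalities in \eqref{20230207_16:55}. The outer lower bound $1 - a_1(x) \geq 0$ is just the hypothesis $x^2 \leq 4(\nu+1)$, and the outer upper bound $1 - a_1(x) + a_2(x) \leq 1$ is the inequality $a_2(x) \leq a_1(x)$, which is the monotonicity already established.

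For $B_\nu$, the argument is identical. Writing $B_\nu(x) = \sum_{n=0}^\infty (-1)^n b_n(x)$ with
\begin{equation*}
b_n(x) := \frac{(x/2)^{2n+1}}{n!\,(\nu+1)(\nu+2)\cdots(\nu+n+1)},
\end{equation*}
one has $b_{n+1}(x)/b_n(x) = (x/2)^2/\big((n+1)(\nu+n+2)\big)$, whose maximum over $n\geq 0$ is $x^2/(4(\nu+2))$. Thus for $0 \leq x \leq 2\sqrt{\nu+2}$ the sequence $\{b_n(x)\}$ is nonincreasing and tends to $0$, so the alternating series estimate yields $b_0(x) - b_1(x) \leq B_\nu(x) \leq b_0(x)$. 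With $b_0(x) = x/(2(\nu+1))$ and $b_1(x) = x^3/(8(\nu+1)(\nu+2))$ these are precisely the inequalities in \eqref{20230207_17:14}; the nonnegativity of $b_0(x) - b_1(x)$ is again a restatement of $b_1(x) \leq b_0(x)$, i.e., of the monotonicity.

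There is no real obstacle: once the ratio computations fix the threshold for the monotonicity of $a_n$ and $b_n$, both inequalities are immediate consequences of the alternating series estimate. The only mild care needed is to verify that the ratio bounds at $n=0$ are indeed the binding ones among all $n \geq 0$, which is clear since $(n+1)(\nu+n+1)$ and $(n+1)(\nu+n+2)$ are increasing in $n$ (for $\nu > -1$).
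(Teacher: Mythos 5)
Your proof is correct and follows essentially the same route as the paper: the paper also compares consecutive terms of the series (pairing term $n$ with term $n+1$ and noting the worst case is $n=0$), which is exactly the monotonicity check underlying your alternating-series estimate. The only blemish is the garbled phrase ``which is nondecreasing neither in $n$'' --- you mean that the ratio is nonincreasing in $n$, so its supremum is attained at $n=0$; the mathematics is otherwise complete.
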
 
\begin{proof}
The proof is by comparing consecutive terms in the series expansions \eqref{20220816_17:43} and \eqref{20221121_10:21}. For instance, in order to show  \eqref{20230207_16:55} we compare the terms $n= 2k$ and $n = 2k+1$ in the series \eqref{20220816_17:43}. We observe that 
\begin{align*}
\frac{(-1)^{2k} \big(\tfrac12 x\big)^{4k}}{(2k)!(\nu +1)(\nu +2)\ldots(\nu+2k)} \geq - \frac{(-1)^{2k+1} \big(\tfrac12 x\big)^{4k+2}}{(2k+1)!(\nu +1)(\nu +2)\ldots(\nu+2k+1)} 
\end{align*}
holds for $x \geq 0$ if and only if
\begin{align*}
1 \geq \frac{x^2}{4(2k+1)(\nu + 2k +1)}.
\end{align*}
In the worst case scenario, i.e. when $k=0$, this holds if $x \leq 2\sqrt{(\nu + 1)}$. This plainly leads us to the inequalities on the left-hand side of \eqref{20230207_16:55}. The inequality on the right-hand side of \eqref{20230207_16:55} is proved analogously, comparing the terms $n= 2k+1$ and $n = 2k+2$, for $k \geq 1$, in the series \eqref{20220816_17:43}. Similarly, one arrives at \eqref{20230207_17:14} via the series \eqref{20221121_10:21}.
\end{proof}

The next result is a basic upper bound that suffices for our purposes in \S \ref{Sec_LBII}. For $\nu >-1$, let $K_{\nu}$ be the reproducing kernel of the Hilbert space $\mc{H}(E_{\nu})$ defined in \S \ref{Sub_DB_spaces}.

\begin{lemma}\label{Lem14_UBRK}
For $-1 < \nu\leq 0$ we have, for $0 \leq x \leq 2\sqrt{\nu +1}$,
\begin{align}\label{20230208_16:13}
\pi K_{\nu}(x,x) \leq \frac{1}{2 (\nu +1)} + \frac{x^2}{4(\nu +1)^2}.
\end{align}
\end{lemma}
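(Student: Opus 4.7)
The plan is to start from the reproducing-kernel formula \eqref{Intro_Def_K} and the ODE system \eqref{20221130_14:53} in order to rewrite $\pi K_{\nu}(x,x)$ purely in terms of $A_{\nu}$ and $B_{\nu}$. Substituting $A_{\nu}'=-B_{\nu}$ and $B_{\nu}'=A_{\nu}-(2\nu+1)x^{-1}B_{\nu}$ yields
\begin{equation*}
\pi K_{\nu}(x,x) \;=\; A_{\nu}(x)\,B_{\nu}'(x)-A_{\nu}'(x)\,B_{\nu}(x) \;=\; \phi(x) \;-\; (2\nu+1)\,x^{-1}A_{\nu}(x)\,B_{\nu}(x),
\end{equation*}
where $\phi(x):=A_{\nu}(x)^{2}+B_{\nu}(x)^{2}$. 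A single further differentiation of $\phi$, using the same ODEs, collapses the cross-terms and leaves the compact identity $\phi'(x)=-2(2\nu+1)\,x^{-1}B_{\nu}(x)^{2}$; since $\phi(0)=1$, integration gives
\begin{equation*}
\phi(x) \;=\; 1 - 2(2\nu+1)\int_{0}^{x}\frac{B_{\nu}(t)^{2}}{t}\,\d t.
\end{equation*}
The sign of $2\nu+1$ is then the natural criterion for splitting the argument.

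In the sub-case $-\tfrac{1}{2}\leq\nu\leq 0$ one has $\phi(x)\leq 1$ for free, and \eqref{20230207_16:55}--\eqref{20230207_17:14} give $A_{\nu},B_{\nu}\geq 0$ on $[0,2\sqrt{\nu+1}]$. Since the cross-term is non-negative and is being subtracted, it must be \emph{lower} bounded, so I would insert the lower estimates from \eqref{20230207_16:55}--\eqref{20230207_17:14}, expand, and discard the (non-negative) quartic piece, arriving at
\begin{equation*}
\pi K_{\nu}(x,x) \;\leq\; \frac{1}{2(\nu+1)} + \frac{(2\nu+1)(2\nu+3)}{8(\nu+1)^{2}(\nu+2)}\,x^{2}.
\end{equation*}
The target bound then reduces to the polynomial inequality $(2\nu+1)(2\nu+3)\leq 2(\nu+2)$, i.e.\ $4\nu^{2}+6\nu-1\leq 0$, whose roots are $(-3\pm\sqrt{13})/4 \approx -1.65,\,0.15$; hence it is amply satisfied on $[-\tfrac{1}{2},0]$.

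In the sub-case $-1<\nu<-\tfrac{1}{2}$, on the other hand, $-(2\nu+1)>0$ and $\phi$ is non-decreasing; feeding the upper estimate $B_{\nu}(t)\leq t/(2(\nu+1))$ from \eqref{20230207_17:14} into the integral yields $\phi(x)\leq 1-(2\nu+1)\,x^{2}/(4(\nu+1)^{2})$. Now the cross-term is itself non-negative, so it must be \emph{upper} bounded, and the crude estimates $A_{\nu}(x)\leq 1$ and $B_{\nu}(x)\leq x/(2(\nu+1))$ suffice:
\begin{equation*}
-(2\nu+1)\,x^{-1}A_{\nu}(x)\,B_{\nu}(x) \;\leq\; \frac{-(2\nu+1)}{2(\nu+1)} \;=\; -1 + \frac{1}{2(\nu+1)}.
\end{equation*}
Adding the two estimates produces the clean collapse $1+\bigl(-1+\tfrac{1}{2(\nu+1)}\bigr)=\tfrac{1}{2(\nu+1)}$, and the desired inequality then follows because $-(2\nu+1)<1$ throughout $(-1,-\tfrac{1}{2})$.

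The principal subtlety is bookkeeping signs: $A_{\nu}B_{\nu}$ must be \emph{lower} bounded in the first sub-case but \emph{upper} bounded in the second, and the polynomial inequality in Case~1 only survives because $[-\tfrac{1}{2},0]$ sits well inside the root interval of $4\nu^{2}+6\nu-1$. The two estimates meet continuously at $\nu=-\tfrac{1}{2}$, where $A_{\nu}=\cos$, $B_{\nu}=\sin$, $\pi K_{\nu}(x,x)\equiv 1$, and the right-hand side equals $1+x^{2}$, confirming that no additional effort is needed at the boundary between the two regimes.
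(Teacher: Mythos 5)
Your proof is correct, and it follows the same overall strategy as the paper: the same starting identity $\pi K_{\nu}(x,x)=A_{\nu}(x)^2+B_{\nu}(x)^2-(2\nu+1)x^{-1}A_{\nu}(x)B_{\nu}(x)$, the same case split at $\nu=-\tfrac12$ according to the sign of $2\nu+1$, and the same use of the series bounds \eqref{20230207_16:55}--\eqref{20230207_17:14} on the cross term (lower bounds when it is subtracted, upper bounds when it is added). The one genuine point of departure is your treatment of $\phi=A_{\nu}^2+B_{\nu}^2$: you derive the exact identity $\phi'(x)=-2(2\nu+1)x^{-1}B_{\nu}(x)^2$ from \eqref{20221130_14:53} and integrate, which gives $\phi\leq 1$ for free when $2\nu+1\geq 0$ and the slightly sharper bound $1-(2\nu+1)x^2/(4(\nu+1)^2)$ when $2\nu+1<0$; the paper instead bounds $\phi$ termwise, using $A_{\nu}^2\leq A_{\nu}\leq 1-\tfrac{x^2}{4(\nu+1)}+\tfrac{x^4}{32(\nu+1)(\nu+2)}$ and $B_{\nu}^2\leq x^2/(4(\nu+1)^2)$. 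As a consequence your final polynomial verification in the regime $-\tfrac12\leq\nu\leq 0$ is $4\nu^2+6\nu-1\leq 0$, whereas the paper's is $8\nu^2+8\nu-4-\nu x^2\leq 0$; both are routine, and your monotonicity identity for $\phi$ is a pleasant structural observation (it is the standard Wronskian-type relation for the Bessel system) that makes the bookkeeping marginally cleaner. One cosmetic remark: the ``non-negative quartic piece'' you discard is the $x^5/(32(\nu+1)^2(\nu+2))$ term of the product lower bound, which becomes non-positive after multiplication by $-(2\nu+1)$; either way the discarding goes in the right direction, so the step is valid as written.
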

\begin{proof}
From \eqref{Intro_Def_K} and \eqref{20221130_14:53} we have
\begin{align*}
\pi K_{\nu}(x,x) = A_{\nu}(x)^2 +  B_{\nu}(x)^2 - \frac{(2\nu +1)A_{\nu}(x)B_{\nu}(x) }{x}.
\end{align*}
When $-1 < \nu \leq -\frac12$, then $2\nu +1 \leq 0$ and may use the upper bounds in \eqref{20230207_16:55} and \eqref{20230207_17:14} to get 
\begin{align*}
\pi K_{\nu}(x,x)  \leq 1 + \frac{x^2}{4(\nu+1)^2}    - \frac{(2\nu+1)}{2(\nu +1)} =  \frac{1}{2 (\nu +1)} + \frac{x^2}{4(\nu +1)^2}.
\end{align*}
If $-\frac12 < \nu \leq 0$, then $2\nu +1 > 0$ and we may use again \eqref{20230207_16:55} and \eqref{20230207_17:14} (now both upper and lower bounds) to get

\begin{align*}
\pi K_{\nu}(x,x) & \leq \left(1 - \frac{x^2}{4(\nu +1)} + \frac{x^4}{32(\nu +1)(\nu +2)}\right) +  \frac{x^2}{4(\nu+1)^2} \\
&  \ \ \ \ \ \ \ \ \ \ \ \ \ \ \ - (2\nu+1) \left(1 - \frac{x^2}{4(\nu +1)}\right) \left(\frac{1}{2(\nu+1)} - \frac{x^2}{8(\nu +1)(\nu+2)}\right) \\
& = \frac{1}{2 (\nu +1)} + \frac{x^2}{4(\nu +1)^2} + \frac{x^2}{32(\nu+1)^2(\nu+2)}\left(8\nu^2 + 8\nu - 4 - \nu x^2\right).
\end{align*}
Observe that in this range we have $8\nu^2 + 8\nu - 4 - \nu x^2\leq 0$, which leads us to \eqref{20230208_16:13}.
\end{proof}

\section{Dimension shifts: proof of Theorem \ref{Thm_dimension_shifts}} \label{Sec_proof_Thm1_Dim_shifts}
We now introduce an extremal problem that is closely related to our problem (EP1).

\smallskip

\noindent {\it Extremal Problem 3} (EP3): Let $\alpha \geq \beta > -1$ and $\delta >0$ be real parameters, and $d \in \N$. Let  $\mc{W}^+_{\alpha}(d\,; 2\delta)$ be the set of real entire functions $M: \C^d \to \C$ of exponential type at most $2\delta$ that are non-negative on $\R^d$ and such that 
$$\int_{\R^d} M(\x)\, |\x|^{2\alpha + 2 -d}\,\d\x <\infty.$$
Find
\begin{equation}\label{20221116_11:19}
(\mathbb{EP}3)(\alpha, \beta\,; d\,; \delta):= \inf_{ 0 \neq M \in  \mc{W}^+_{\alpha}(d; 2\delta) } \frac{\int_{\R^d} M(\x) \, |\x|^{2\alpha + 2 -d}\,\d\x}{\int_{\R^d} M(\x) \, |\x|^{2\beta + 2 -d}\,\d\x}.
\end{equation}

\smallskip

We now proceed to the proof of Theorem \ref{Thm_dimension_shifts}.

\subsubsection{Step 1} Observe first that if $F \in \mc{H}_{\alpha}(d; \delta)$ then $M(\z) = F(\z)F^*(\z) \in \mc{W}^+_{\alpha}(d; 2\delta)$. Hence
\begin{equation}\label{20221116_20:08}
(\mathbb{EP}1)(\alpha, \beta\,; d\,; \delta) \geq (\mathbb{EP}3)(\alpha, \beta\,; d\,; \delta).
\end{equation}
\subsubsection{Step 2} If $M \in \mc{W}^+_{\alpha}(d; 2\delta)$, by Lemma \ref{type-to-ntype} (ii) we have that $\widetilde{M} \in \mc{W}^+_{\alpha}(d; 2\delta)$ and an application of Fubini's theorem yields
\begin{equation*}
\int_{\R^d} M(\x) \, |\x|^{2\gamma + 2 -d}\,\d\x = \int_{\R^d} \widetilde{M}(\x) \, |\x|^{2\gamma + 2 -d}\,\d\x
\end{equation*}
for any $\gamma > -1$. In particular, we can restrict our search for the infimum in \eqref{20221116_11:19} to functions $M \in \mc{W}^+_{\alpha}(d; 2\delta)$ that are radial on $\R^d$. By Lemma \ref{type-to-ntype}, such $M$ is the lift ${\mc L}_d(f)$ of an even entire function $f \in \mc{W}^+_{\alpha}(1; 2\delta)$, and conversely. Reducing to polar coordinates we have
$$\frac{\int_{\R^d} M(\x) \, |\x|^{2\alpha + 2 -d}\,\d\x}{\int_{\R^d} M(\x) \, |\x|^{2\beta + 2 -d}\,\d\x} = \frac{\int_{\R} f(x) \, |x|^{2\alpha + 1}\,\d x}{\int_{\R} f(x) \, |x|^{2\beta + 1}\,\d x}\,,$$
and we conclude that 
\begin{equation}\label{20221116_20:091}
(\mathbb{EP}3)(\alpha, \beta\,; d\,; \delta) = (\mathbb{EP}3)(\alpha, \beta\,; 1\,; \delta).
\end{equation}
\subsubsection{Step 3} By Krein's decomposition (Lemma \ref{Krein_dec_lem} above), every $f \in \mc{W}^+_{\alpha}(1; 2\delta)$ can be written as $f(z) = g(z)\, g^*(z)$ with $g \in \mc{H}_{\alpha}(1; \delta)$, and conversely. This implies that 
\begin{equation}\label{20221116_20:092}
(\mathbb{EP}3)(\alpha, \beta\,; 1\,; \delta) = (\mathbb{EP}1)(\alpha, \beta\,; 1\,; \delta).
\end{equation}

\subsubsection{Step 4} \label{Sec_Step4_Pf1} We now use a fact that will be proved independently in the next section: there exists an even extremizer $g \in \mc{H}_{\alpha}(1; \delta)$ for $(\mathbb{EP}1)(\alpha, \beta\,; 1\,; \delta)$. By Lemma \ref{type-to-ntype} (i), we may lift it to ${\mc L}_d(g) \in  \mc{H}_{\alpha}(d; \delta)$, and we find that 
\begin{equation}\label{20221116_19:55}
(\mathbb{EP}1)(\alpha, \beta\,; 1\,; \delta)= \frac{\int_{\R} |g(x)|^2 \, |x|^{2\alpha + 1}\,\d x}{\int_{\R} |g(x)|^2 \, |x|^{2\beta + 1}\,\d x} = \frac{\int_{\R^d} |{\mc L}_d(g)(\x)|^2 \, |\x|^{2\alpha + 2-d}\,\d \x}{\int_{\R^d} |{\mc L}_d(g)(\x)|^2 \, |\x|^{2\beta + 2-d}\,\d \x} \geq (\mathbb{EP}1)(\alpha, \beta\,; d\,; \delta).
\end{equation}

\subsubsection{Conclusion} Combining \eqref{20221116_20:08}, \eqref{20221116_20:091}, \eqref{20221116_20:092} and \eqref{20221116_19:55} we arrive at 
$$(\mathbb{EP}1)(\alpha, \beta\,; d\,; \delta) \geq (\mathbb{EP}3)(\alpha, \beta\,; d\,; \delta) =  (\mathbb{EP}3)(\alpha, \beta\,; 1\,; \delta) = (\mathbb{EP}1)(\alpha, \beta\,; 1\,; \delta) \geq (\mathbb{EP}1)(\alpha, \beta\,; d\,; \delta),$$
and hence we must have the equalities
$$(\mathbb{EP}1)(\alpha, \beta\,; d\,; \delta) = (\mathbb{EP}3)(\alpha, \beta\,; d\,; \delta) =  (\mathbb{EP}3)(\alpha, \beta\,; 1\,; \delta) = (\mathbb{EP}1)(\alpha, \beta\,; 1\,; \delta).$$

\section{Existence of extremizers: proof of Theorem \ref{Thm2}}\label{Sec_Existence_of _extremizers}

\subsection{Existence of extremizers in dimension $d=1$.} \label{Exist_extr_dim_1}We now want to establish the existence of extremizers for $(\mathbb{EP}1)(\alpha, \beta\,; 1\,; \delta)$. By dilation, it suffices to consider $\delta =1$. To shorten the notation, let us write $(\mathbb{EP}1)(\alpha, \beta\,; 1\,; 1)$ in this subsection simply as $(\mathbb{EP}1)$. Assume also that $\alpha >\beta$.

\smallskip

Let $\{f_n\}_{n\geq 1} \subset  \mc{H}_{\alpha}(1\,; 1)$ be an extremizing sequence, normalized so that $\|f_n\|_{ \mc{H}_{\alpha}(1; 1)} = 1$. This means that 
$$\|f_n\|^2_{ \mc{H}_{\beta}(1\,; 1)} \to (\mathbb{EP}1)^{-1}.$$
By reflexivity, up to a subsequence, we may assume that $f_n \rightharpoonup g$ for some $g \in \mc{H}_{\alpha}(1\,; 1)$. Note that 
\begin{equation}\label{20221117_10:45}
\|g\|_{ \mc{H}_{\alpha}(1; 1)} \leq 1.
\end{equation}
Since $\mc{H}_{\alpha}(1\,; 1) = \mc{H}(E_{\alpha})$ as sets, with norms differing by a multiplicative constant (Lemma \ref{Sec5_rel_facts}), this is a reproducing kernel Hilbert space; see \S \ref{Sub_DB_spaces}. Then, for any $w \in \C$, we have the pointwise convergence
\begin{align}\label{20230221_15:24}
f_n(w) = \langle f_n, K_{\alpha}(w, \cdot) \rangle_{\mc{H}(E_{\alpha})} \to  \langle g, K_{\alpha}(w, \cdot) \rangle_{\mc{H}(E_{\alpha})} = g(w).
\end{align}
By the Cauchy-Schwarz inequality, we also get that 
\begin{align}\label{20230221_15:39}
|f_n(w)| = |\langle f_n, K_{\alpha}(w, \cdot) \rangle_{\mc{H}(E_{\alpha})}| \leq \|f_n\|_{ \mc{H}(E_{\alpha})} \,  \|K_{\alpha}(w, \cdot)\|_{\mc{H}(E_{\alpha})} =  c_{\alpha}^{1/2} \, K_{\alpha}(w,w)^{1/2},
\end{align}
and since $K_{\alpha}$ is a continuous function of two variables, we get that $f_n$ is uniformly bounded in compact subsets of $\C$. 

\smallskip

Given any $a < (\mathbb{EP}1)^{-1}$, there exists an $N_0 = N_0(a)$ such that for $n\geq N_0$ we have
\begin{align}
\begin{split}\label{20221117_10:33}
a & \leq \int_{\R} |f_n(x)|^2\,|x|^{2 \beta +1}\,\d x  \\
& = \int_{|x| \leq R} |f_n(x)|^2\,|x|^{2 \beta +1}\,\d x + \int_{|x| > R} |f_n(x)|^2\,|x|^{2 \beta +1}\,\d x\\
& \leq \int_{|x| \leq R} |f_n(x)|^2\,|x|^{2 \beta +1}\,\d x + R^{2\beta - 2\alpha}\int_{|x| > R} |f_n(x)|^2\,|x|^{2 \alpha +1}\,\d x\\
& \leq \int_{|x| \leq R} |f_n(x)|^2\,|x|^{2 \beta +1}\,\d x  + R^{2\beta - 2\alpha}\,,
\end{split}
\end{align}
for any fixed $R$, where we have used that $\alpha > \beta$ and $\|f_n\|_{ \mc{H}_{\alpha}(1; 1)} = 1$. Letting $n \to \infty$ and applying the dominated convergence theorem in \eqref{20221117_10:33} we get
\begin{equation}\label{20221117_10:34}
a - R^{2\beta - 2\alpha} \leq \int_{|x| \leq R} |g(x)|^2\,|x|^{2 \beta +1}\,\d x \leq \int_{\R} |g(x)|^2\,|x|^{2 \beta +1}\,\d x.
\end{equation}
Letting $R \to \infty$ in \eqref{20221117_10:34} we arrive at 
\begin{equation*}
a  \leq \int_{\R} |g(x)|^2\,|x|^{2 \beta +1}\,\d x.
\end{equation*}
This shows that $0 \neq g $ and, since  $a < (\mathbb{EP}1)^{-1}$ is arbitrary, we conclude that 
\begin{equation}\label{20221117_10:46}
(\mathbb{EP}1)^{-1}  \leq \int_{\R} |g(x)|^2\,|x|^{2 \beta +1}\,\d x.
\end{equation}
In particular $(\mathbb{EP}1)^{-1}$ is finite (as we had already observed with a different reasoning in the introduction). From the setup of our problem, we must have equalities both in \eqref{20221117_10:45} and \eqref{20221117_10:46}, and $g \in \mc{H}_{\alpha}(1\,; 1)$ is therefore an extremizer.

\subsection{Even extremizers in dimension $d=1$} Having established the existence of extremizers in dimension $d=1$ in \S \ref{Exist_extr_dim_1}, we now prove the following complementary result.

\begin{proposition}\label{Prop_even_extremizers_dim1}
Let $\alpha >\beta > -1$ and $\delta >0$ be real parameters. Any extremizer for $(\mathbb{EP}1)(\alpha, \beta\,; 1\,; \delta)$ must be an even function.
\end{proposition}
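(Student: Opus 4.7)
The plan is to decompose any extremizer into its even and odd parts and show that the odd part must vanish, by constructing a strictly better competitor from the quotient $g_o/z$ and invoking a strict Chebyshev integral inequality.

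Let $g \in \mc{H}_{\alpha}(1;\delta)$ be an extremizer. Write $g = g_e + g_o$ with $g_e(z) := \tfrac12(g(z) + g(-z))$ and $g_o(z) := \tfrac12(g(z) - g(-z))$; both summands are entire of exponential type at most $\delta$ and belong to $\mc{H}_{\alpha}(1;\delta)$. Since the weights $|x|^{2\alpha+1}$ and $|x|^{2\beta+1}$ are even while $x \mapsto g_e(x)\overline{g_o(x)}$ is odd, the cross terms integrate to zero and one obtains the orthogonal decompositions $\|g\|_{\gamma}^2 = \|g_e\|_{\gamma}^2 + \|g_o\|_{\gamma}^2$ for $\gamma\in\{\alpha,\beta\}$. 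Because every nonzero summand satisfies $\|\cdot\|_{\alpha}^2 \geq (\mathbb{EP}1)(\alpha,\beta\,;1\,;\delta)\, \|\cdot\|_{\beta}^2$ while the total ratio equals $(\mathbb{EP}1)(\alpha,\beta\,;1\,;\delta)$, each nonzero summand must itself saturate this ratio. In particular, if $g_o \neq 0$, then $g_o$ is an odd extremizer.

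Assume, toward a contradiction, that $g_o \neq 0$. Since $g_o$ is odd, $g_o(0) = 0$, so $\tilde g(z) := g_o(z)/z$ is an even entire function of exponential type at most $\delta$. A short integrability check ($|g_o(x)|^2 = O(x^2)$ near the origin combined with $|x|^{-2} \leq 1$ for $|x|\geq 1$) shows $\tilde g \in \mc{H}_{\alpha}(1;\delta)\setminus\{0\}$. The heart of the argument is the strict inequality
\begin{equation*}
\frac{\|\tilde g\|_{\alpha}^2}{\|\tilde g\|_{\beta}^2} \;=\; \frac{\int_{\R} |g_o(x)|^2\, |x|^{2\alpha-1}\,\d x}{\int_{\R} |g_o(x)|^2\, |x|^{2\beta-1}\,\d x} \;<\; \frac{\int_{\R} |g_o(x)|^2\, |x|^{2\alpha+1}\,\d x}{\int_{\R} |g_o(x)|^2\, |x|^{2\beta+1}\,\d x} \;=\; (\mathbb{EP}1)(\alpha,\beta\,;1\,;\delta),
\end{equation*}
which contradicts the definition of $(\mathbb{EP}1)(\alpha,\beta\,;1\,;\delta)$ as an infimum. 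After the substitution $y = x^2$ on $[0,\infty)$ (justified by the evenness of $|g_o|^2$) and absorbing a factor $y^{\beta-1/2}$ into a finite positive measure $\d\rho$, the inequality reduces to
\begin{equation*}
\int y^{\alpha-\beta}\,\d\rho \cdot \int y\,\d\rho \;<\; \int \d\rho \cdot \int y^{\alpha-\beta+1}\,\d\rho,
\end{equation*}
which is precisely the strict Chebyshev integral inequality applied to the two strictly increasing functions $y\mapsto y^{\alpha-\beta}$ and $y\mapsto y$ (recall $\alpha>\beta$). Strictness follows because $g_o$ is a nonzero entire function with isolated zeros, so $\rho$ charges every nontrivial subinterval of $(0,\infty)$.

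The main obstacle is recognizing Chebyshev's inequality as the correct mechanism producing \emph{strict} inequality (rather than merely $\leq$) and setting up the right change of variables to make this visible; once these are in place, the divide-by-$z$ construction and the verification that $\tilde g$ competes against $g_o$ in the same Hilbert space are routine.
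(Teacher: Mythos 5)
Your proof is correct and follows essentially the same route as the paper: decompose $g$ into even and odd parts, use orthogonality to conclude that a nonzero odd part would itself be an extremizer, divide by $z$ to land back in $\mc{H}_{\alpha}(1;\delta)$, and compare the two Rayleigh quotients. The only difference is in the last step, and it is cosmetic: where you invoke the strict Chebyshev integral inequality for the comonotone pair $y\mapsto y^{\alpha-\beta}$, $y\mapsto y$, the paper derives the non-strict comparison from the infimum property, proves the reverse non-strict inequality by applying H\"{o}lder twice, and then excludes the forced equality case -- both mechanisms amount to the positivity of $\iint \big(f(x)-f(y)\big)\big(h(x)-h(y)\big)\,\d\rho(x)\,\d\rho(y)$ together with the observation that $\rho$ is not a point mass.
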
 
\begin{proof}
Let $g \in \mc{H}_{\alpha}(1\,; \delta)$ be an extremizer of $(\mathbb{EP}1)(\alpha, \beta\,; 1\,; \delta)$. Write 
\begin{equation*}
g(z) = g_{e}(z) + g_{o}(z),
\end{equation*} 
where $g_{e}(z) := \tfrac12(g(z) + g(-z))$ and $g_{o}(z) := \tfrac12(g(z) - g(-z))$ are the even and odd parts of $g$. Due to the orthogonality between $g_{e}$ and $g_{o}$ observe that
\begin{align}\label{20221117_17:03}
(\mathbb{EP}1)(\alpha, \beta\,; 1\,; \delta) = \frac{\int_{\R} |g(x)|^2 \, |x|^{2\alpha + 1}\,\d x }{\int_{\R} |g(x)|^2 \, |x|^{2\beta + 1}\,\d x} = \frac{\int_{\R} |g_{e}(x)|^2 \, |x|^{2\alpha + 1}\,\d x + \int_{\R} |g_{o}(x)|^2 \, |x|^{2\alpha + 1}\,\d x }{\int_{\R} |g_{e}(x)|^2 \, |x|^{2\beta + 1}\,\d x + \int_{\R} |g_{o}(x)|^2 \, |x|^{2\beta + 1}\,\d x} .
\end{align}

Assume $g_{o}$ is not identically zero. Since $g_{e}$ and $g_{o}$ are both in $\mc{H}_{\alpha}(1\,; \delta)$, by \eqref{20221117_17:03} we observe that $g_{o}$ must also be an extremizer. We may write $g_{o}(z) = z \,h(z)$ with $h \in \mc{H}_{\alpha +1}(1; \delta) \subset \mc{H}_{\alpha}(1; \delta)$. Then, by the setup of our problem, 
\begin{align}\label{20221117_17:11}
\frac{\int_{\R} |h(x)|^2 \, |x|^{2\alpha + 3}\,\d x }{\int_{\R} |h(x)|^2 \, |x|^{2\beta + 3}\,\d x} = \frac{\int_{\R} |g_{o}(x)|^2 \, |x|^{2\alpha + 1}\,\d x }{\int_{\R} |g_{o}(x)|^2 \, |x|^{2\beta + 1}\,\d x}  = (\mathbb{EP}1)(\alpha, \beta\,; 1\,; \delta)  \leq \frac{\int_{\R} |h(x)|^2 \, |x|^{2\alpha + 1}\,\d x }{\int_{\R} |h(x)|^2 \, |x|^{2\beta + 1}\,\d x}.
\end{align}
Letting $\d\mu(x) = |h(x)|^2 \, |x|^{2\beta + 1}\,\d x$ and assuming without loss of generality that $\d\mu$ is normalized so that $\int_{\R} \d\mu(x) = 1$, the inequality in \eqref{20221117_17:11} can be rewritten as
\begin{align}\label{20221117_17:25}
\int_{\R}|x|^{2\gamma +2}\, \d\mu(x) \leq \left(\int_{\R}|x|^{2\gamma}\, \d\mu(x) \right) \left( \int_{\R}|x|^{2}\, \d\mu(x) \right)\,,
\end{align}
with $\gamma = \alpha - \beta >0$. On the other hand, by H\"{o}lder's inequality we have
\begin{align}
\int_{\R}|x|^{2\gamma}\, \d\mu(x) & \leq \left(\int_{\R}|x|^{2\gamma +2}\, \d\mu(x)\right)^{\frac{2\gamma}{2\gamma +2}}\left(\int_{\R} \d\mu(x)\right)^{\frac{2}{2\gamma +2}} \label{20221117_17:26}
\end{align}
and
\begin{align}
\int_{\R}|x|^{2}\, \d\mu(x) & \leq \left(\int_{\R}|x|^{2\gamma +2}\, \d\mu(x)\right)^{\frac{2}{2\gamma +2}}\left(\int_{\R} \d\mu(x)\right)^{\frac{2\gamma}{2\gamma +2}}, \label{20221117_17:27}
\end{align}
which multiply out to yield
\begin{align}\label{20221117_17:28}
\left(\int_{\R}|x|^{2\gamma}\, \d\mu(x) \right) \left(\int_{\R}|x|^{2}\, \d\mu(x)\right) \leq \int_{\R}|x|^{2\gamma +2}\, \d\mu(x).
\end{align}
We must then have equalities in \eqref{20221117_17:25},\eqref{20221117_17:26}, \eqref{20221117_17:27} and \eqref{20221117_17:28}. By the case of equality in H\"{o}lder's inequality, this implies that $|x|^{2\gamma +2}$ is constant in the support of $\d\mu$, which is a contradiction.

\smallskip

The conclusion is that our original extremizer $g$ must be an even function.
\end{proof}

\subsection{A brief detour: some basic monotonicity properties}
It is worth mentioning the following consequence of Proposition \ref{Prop_even_extremizers_dim1}. 

\begin{proposition}\label{Prop_basic_ineq_a+1}
For $\alpha > \beta > -1$ and $\delta >0$ real parameters, and $d \in \N$, we have
\begin{equation}\label{20221116_18:40}
(\mathbb{EP}1)(\alpha, \beta\,; d\,; \delta) < (\mathbb{EP}1)(\alpha+1, \beta+1\,; d\,; \delta).
\end{equation}
\end{proposition}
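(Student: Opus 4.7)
The plan is to reduce to one dimension and unit exponential type, prove the non-strict inequality by a direct embedding, and then upgrade to strict inequality by a parity argument exploiting Proposition \ref{Prop_even_extremizers_dim1}.

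First I would invoke Theorem \ref{Thm_dimension_shifts} together with the scaling identity \eqref{20220816_15:18_1} to reduce \eqref{20221116_18:40} to the one-dimensional unit-type statement
\begin{equation*}
(\mathbb{EP}1)(\alpha, \beta\,; 1\,; 1) < (\mathbb{EP}1)(\alpha+1, \beta+1\,; 1\,; 1);
\end{equation*}
note that the factor $\delta^{2\beta-2\alpha}$ produced by \eqref{20220816_15:18_1} is the same on both sides, so strictness transfers. For the non-strict direction, observe that if $g \in \mc{H}_{\alpha+1}(1\,; 1)$ then $f(z) := z\,g(z)$ is entire of exponential type at most $1$ and satisfies
\begin{equation*}
\int_{\R} |f(x)|^2 |x|^{2\gamma + 1}\,\d x = \int_{\R} |g(x)|^2 |x|^{2\gamma + 3}\,\d x
\end{equation*}
for any $\gamma > -1$; hence $f \in \mc{H}_{\alpha}(1\,;1)$ and the Rayleigh-type quotient in \eqref{20220815_17:17_1} for $f$ at parameters $(\alpha,\beta)$ coincides with that for $g$ at parameters $(\alpha+1,\beta+1)$. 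Taking the infimum over such $g$ yields $\leq$.

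For the strict inequality I would argue by contradiction. Suppose equality holds. By the existence result established in \S\ref{Exist_extr_dim_1} (Theorem \ref{Thm2} in dimension one), there is an extremizer $g \in \mc{H}_{\alpha+1}(1\,;1)$ for $(\mathbb{EP}1)(\alpha+1, \beta+1\,; 1\,; 1)$. The identity in the previous paragraph then forces $f(z) = z\,g(z) \in \mc{H}_{\alpha}(1\,;1)$ to be an extremizer for $(\mathbb{EP}1)(\alpha, \beta\,;1\,;1)$. Now apply Proposition \ref{Prop_even_extremizers_dim1} twice: in the $(\alpha+1,\beta+1)$ problem it forces $g$ to be even, and in the $(\alpha,\beta)$ problem it forces $f$ to be even. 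But if $g$ is even then $f(z) = z g(z)$ is odd, so $f \equiv 0$, contradicting the fact that $f$ (hence $g$) is a nonzero extremizer. This contradiction establishes \eqref{20221116_18:40}.

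I do not anticipate a genuine obstacle here: the only mild point is to make sure all prerequisites of Proposition \ref{Prop_even_extremizers_dim1} are met in the shifted problem, namely $\alpha+1 > \beta+1 > -1$, which follows from the hypotheses $\alpha > \beta > -1$. The argument is essentially symmetry-based and uses that multiplication by $z$ flips parity, which is incompatible with both extremizers being even.
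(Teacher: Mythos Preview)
Your proof is correct and follows essentially the same approach as the paper: reduce to $d=1$, use the map $g\mapsto z\,g$ to get the non-strict inequality, and for strictness invoke Proposition \ref{Prop_even_extremizers_dim1} to derive a parity contradiction. The only cosmetic difference is that you apply Proposition \ref{Prop_even_extremizers_dim1} explicitly to both problems and also reduce to $\delta=1$, whereas the paper works at general $\delta$ and leaves the second application of Proposition \ref{Prop_even_extremizers_dim1} implicit in the word ``contradiction.''
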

\begin{proof}
From Theorem \ref{Thm_dimension_shifts} it suffices to verify \eqref{20221116_18:40} for $d=1$. For any $0\neq f \in \mc{H}_{\alpha+1 }(1\,; \delta)$ we have $f(z) \,z\in \mc{H}_{\alpha}(1\,; \delta)$ and hence
$$(\mathbb{EP}1)(\alpha, \beta\,; 1\,; \delta) \leq \frac{\int_{\R} |f(x)\,x|^2 \, |x|^{2\alpha + 1}\,\d x }{\int_{\R} |f(x)\,x|^2 \, |x|^{2\beta + 1}\,\d x} =  \frac{\int_{\R} |f(x)|^2 \, |x|^{2\alpha + 3}\,\d x }{\int_{\R} |f(x)|^2 \, |x|^{2\beta + 3}\,\d x}.$$
This plainly leads us to the inequality
\begin{equation}\label{20221117_17:52}
(\mathbb{EP}1)(\alpha, \beta\,; 1\,; \delta) \leq (\mathbb{EP}1)(\alpha+1, \beta+1\,; 1\,; \delta).
\end{equation}
If we had equality in \eqref{20221117_17:52}, letting $f$ be an extremizer for $(\mathbb{EP}1)(\alpha+1, \beta+1\,; 1\,; \delta)$, which is even by Proposition \ref{Prop_even_extremizers_dim1}, the function $f(z) \,z$ would be an odd extremizer to $(\mathbb{EP}1)(\alpha, \beta\,; 1\,; \delta)$, a contradiction.
\end{proof}

In sympathy with our asymptotics of Theorem \ref{Thm5 - Asympt}, we remark that the following monotonicity property also holds. 

\begin{proposition}
For $\alpha > \beta > -1$ and $\delta >0$ real parameters, and $d \in \N$, the map 
$$\alpha \mapsto \frac{1}{(\alpha - \beta)}\, \log\big((\mathbb{EP}1)(\alpha, \beta\,; d\,; \delta)\big)$$ 
is non-decreasing. 
\end{proposition}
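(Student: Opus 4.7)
The plan is to prove monotonicity via a one-step Hölder-interpolation applied to an extremizer coming from Theorem \ref{Thm2}. First I would reduce to the simplest case. By the scaling identity \eqref{20220816_15:18_1} we have $\log(\mathbb{EP}1)(\alpha,\beta\,;d\,;\delta) = -2(\alpha-\beta)\log\delta + \log(\mathbb{EP}1)(\alpha,\beta\,;d\,;1)$, so dividing by $(\alpha-\beta)$ shifts the quantity in question by the $\alpha$-independent constant $-2\log\delta$; combined with Theorem \ref{Thm_dimension_shifts}, which removes the $d$-dependence, it suffices to prove the monotonicity claim for $d=1$ and $\delta=1$.

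Fix $\beta>-1$ and $\beta<\alpha_1<\alpha_2$, and set $\theta:=(\alpha_2-\alpha_1)/(\alpha_2-\beta)\in(0,1)$, so that $\alpha_1=\theta\beta+(1-\theta)\alpha_2$. Write $I_\gamma(f):=\int_\R|f(x)|^2|x|^{2\gamma+1}\,\d x$. For any $f\in\mc{H}_{\alpha_2}(1\,;1)$, the factorization $|x|^{2\alpha_1+1}=|x|^{\theta(2\beta+1)}\cdot|x|^{(1-\theta)(2\alpha_2+1)}$ combined with Hölder's inequality with conjugate exponents $1/\theta$ and $1/(1-\theta)$ yields
$$I_{\alpha_1}(f)\;\leq\;I_\beta(f)^{\theta}\,I_{\alpha_2}(f)^{1-\theta}.$$
Since the inclusion $\mc{H}_{\alpha_2}(1\,;1)\subset\mc{H}_{\alpha_1}(1\,;1)$ recorded in the introduction makes $f$ an admissible competitor for the infimum defining $(\mathbb{EP}1)(\alpha_1,\beta\,;1\,;1)$, dividing the displayed inequality by $I_\beta(f)$ gives
$$\frac{I_{\alpha_1}(f)}{I_\beta(f)}\;\leq\;\left(\frac{I_{\alpha_2}(f)}{I_\beta(f)}\right)^{\!1-\theta}.$$

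To conclude, choose $f$ to be an extremizer for $(\mathbb{EP}1)(\alpha_2,\beta\,;1\,;1)$, whose existence is guaranteed by Theorem \ref{Thm2}. The right-hand side above then equals $(\mathbb{EP}1)(\alpha_2,\beta\,;1\,;1)^{1-\theta}$, while admissibility of $f$ gives $(\mathbb{EP}1)(\alpha_1,\beta\,;1\,;1)\leq I_{\alpha_1}(f)/I_\beta(f)$. Taking logarithms and dividing by $\alpha_1-\beta=(1-\theta)(\alpha_2-\beta)$ produces
$$\frac{\log(\mathbb{EP}1)(\alpha_1,\beta\,;1\,;1)}{\alpha_1-\beta}\;\leq\;\frac{\log(\mathbb{EP}1)(\alpha_2,\beta\,;1\,;1)}{\alpha_2-\beta},$$
which is the desired monotonicity. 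I expect no real obstacle in this argument: the estimate is just log-convexity of the moment functional $\gamma\mapsto\log I_\gamma(f)$, and if one wishes to avoid invoking Theorem \ref{Thm2} one may equivalently run the same computation along any extremizing sequence for $(\mathbb{EP}1)(\alpha_2,\beta\,;1\,;1)$ and pass to the limit.
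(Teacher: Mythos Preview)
Your proof is correct and follows essentially the same approach as the paper: both reduce to $d=1$ via Theorem~\ref{Thm_dimension_shifts} and then apply H\"older's inequality with exponent $p=(\alpha_2-\beta)/(\alpha_1-\beta)=1/(1-\theta)$ to interpolate the $\alpha_1$-moment between the $\beta$- and $\alpha_2$-moments of a competitor $f\in\mc{H}_{\alpha_2}(1\,;\delta)$. The only cosmetic difference is that the paper works directly with an arbitrary $\delta$ and an arbitrary (normalized) competitor---implicitly passing to the infimum at the end---whereas you first scale to $\delta=1$ and then pick an extremizer via Theorem~\ref{Thm2}; as you yourself note, the extremizing-sequence variant recovers the paper's argument verbatim.
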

\begin{proof}
From Theorem \ref{Thm_dimension_shifts} it suffices to prove these claims for $d=1$. Let $\alpha_2 >\alpha_1 > \beta$ and $f \in \mc{H}_{\alpha_2}(1; \delta)$ be normalized so that $\int_{\R} |f(x)|^2 \, |x|^{2\beta +1}\,\d x = 1$. Write $\d \mu(x) =  |f(x)|^2 \, |x|^{2\beta +1}\,\d x$.
Applying H\"{o}lder's inequality with exponent $ p = (\alpha_2 - \beta)/(\alpha_1 - \beta)$ 
we get
\begin{equation*}
(\mathbb{EP}1)(\alpha_1, \beta\,; 1\,; \delta) \leq \int_{\R} |x|^{2(\alpha_1 - \beta)}\d \mu(x) \leq \left(\int_{\R} |x|^{2(\alpha_2 - \beta)}\d \mu(x)\right)^{1/p}.
\end{equation*}
This leads us to 
\begin{align*}
(\mathbb{EP}1)(\alpha_1, \beta\,; 1\,; \delta)^{1/(\alpha_1 - \beta)} \leq (\mathbb{EP}1)(\alpha_2, \beta\,; 1\,; \delta)^{1/(\alpha_2 - \beta)},
\end{align*}
as desired. 
\end{proof}

\subsection{Existence of extremizers in higher dimensions} \label{Exist_Extre_Subsc} Let $g$ be an extremizer for $(\mathbb{EP}1)(\alpha, \beta\,; 1\,; \delta)$. By Proposition \ref{Prop_even_extremizers_dim1}, $g$ is even. From Theorem \ref{Thm_dimension_shifts} one plainly verifies that the lift ${\mc L}_d(g)$ is a radial extremizer for $(\mathbb{EP}1)(\alpha, \beta\,; d\,; \delta)$. 

\section{Continuity: proof of Theorem \ref{Thm_continuity}}\label{Sec_cont_proof}

In light of Theorem \ref{Thm_dimension_shifts} and the dilation relation \eqref{20220816_15:18_1}, in order to prove Theorem \ref{Thm_continuity} it suffices to show that $(\alpha, \beta) \mapsto (\mathbb{EP}1)(\alpha, \beta\,; 1\,; 1)$ is continuous in range $\alpha \geq \beta >-1$. 

\subsection{Approximation by Schwartz functions} Let $\mc{S}(\R)$ be the class of Schwartz functions. We shall first prove the following lemma. 

\begin{lemma}\label{Lem21_aprox_Schwartz}
Let $\alpha \geq \beta >-1$. Then 

\begin{equation*}
(\mathbb{EP}1)(\alpha, \beta\,; 1\,; 1) = \inf_{ 0 \neq f \in  \mc{S}(\R) \cap \mc{H}_{\alpha}(1; 1) } \frac{\int_{\R} |f(x)|^2 \, |x|^{2\alpha +1}\,\d x}{\int_{\R} |f(x)|^2 \, |x|^{2\beta +1}\,\d x}.
\end{equation*}
\end{lemma}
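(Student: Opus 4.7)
The inequality $(\mathbb{EP}1)(\alpha, \beta\,; 1\,; 1) \leq \inf\bigl\{\ldots \,:\, f \in \mc{S}(\R)\cap\mc{H}_\alpha(1;1)\bigr\}$ is immediate from the inclusion $\mc{S}(\R)\cap\mc{H}_\alpha(1;1) \subset \mc{H}_\alpha(1;1)$. The plan for the reverse inequality is to show that any nonzero $f \in \mc{H}_\alpha(1;1)$ admits an approximating family in $\mc{S}(\R) \cap \mc{H}_\alpha(1;1)$ along which the ratio in the definition of $(\mathbb{EP}1)$ converges to the ratio for $f$. The natural candidate is
\[
f_\epsilon(z) := f\bigl((1-\epsilon) z\bigr)\,\varphi(\epsilon z),
\]
where $\varphi$ is a fixed Schwartz function of exponential type at most $1$ with $\varphi(0) = 1$. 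Such $\varphi$ can be produced as the inverse Fourier transform of any nonnegative, mass-one bump $\psi \in C_c^\infty\bigl((-\tfrac{1}{2\pi}, \tfrac{1}{2\pi})\bigr)$: Paley--Wiener gives $\tau(\varphi)\leq 1$, and $\varphi \in \mc{S}(\R)$ follows from the invariance of the Schwartz class under Fourier transform. The scaling $\varphi(\epsilon\cdot)$ then has exponential type at most $\epsilon$, so $\tau(f_\epsilon) \leq (1-\epsilon) + \epsilon = 1$.

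The first real step is verifying that $f_\epsilon \in \mc{S}(\R)$. I would invoke the reproducing kernel structure $\mc{H}_\alpha(1;1) = \mc{H}(E_\alpha)$ (see \S\ref{Sub_DB_spaces}), which yields the pointwise bound $|f(x)| \leq c_\alpha^{1/2} K_\alpha(x,x)^{1/2} \|f\|_{\mc{H}(E_\alpha)}$ and hence polynomial control of $f$ on $\R$. Cauchy's integral formula on a disk of fixed radius around each real point then promotes this to polynomial control of every derivative $f^{(k)}$. Since $\varphi(\epsilon\cdot)$ and all of its derivatives decay faster than any polynomial, the Leibniz rule delivers $f_\epsilon \in \mc{S}(\R)$.

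Finally, I would analyze the weighted integrals. After the substitution $y = (1-\epsilon) x$,
\[
\int_\R |f_\epsilon(x)|^2 |x|^{2\gamma+1}\,\d x = (1-\epsilon)^{-2\gamma-2}\int_\R |f(y)|^2 \left|\varphi\!\left(\tfrac{\epsilon y}{1-\epsilon}\right)\right|^2 |y|^{2\gamma+1}\,\d y.
\]
Since $\varphi(0) = 1$ and $\varphi$ is bounded on $\R$, dominated convergence with majorant $\|\varphi\|_\infty^2 |f(y)|^2 |y|^{2\gamma+1}$ (integrable for $\gamma = \alpha$ by hypothesis, and integrable for $\gamma = \beta$ because $f$ is bounded near the origin where $\beta > -1$ secures local integrability, while $|y|^{2\beta+1}\leq |y|^{2\alpha+1}$ for $|y|\geq 1$) shows that both numerator and denominator of the ratio for $f_\epsilon$ converge to those for $f$ as $\epsilon \to 0^+$. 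Taking the infimum over $f$ closes the loop. The main obstacle is the Schwartz property of $f_\epsilon$, for which the reproducing-kernel bound on $f$ plus Cauchy's formula are the essential ingredients; everything else is routine bookkeeping.
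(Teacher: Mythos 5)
Your proposal is correct and follows essentially the same route as the paper: multiply by a bandlimited Schwartz function tending to $1$ (the Fourier transform of a smooth, compactly supported, mass-one bump), compensate the resulting increase in exponential type by a dilation, and pass to the limit by dominated convergence — the paper multiplies first and dilates afterwards, you dilate first, but this is cosmetic. The one place where your write-up is thinner than it needs to be is the claim that Cauchy's formula on disks of fixed radius converts the polynomial bound for $f$ on $\R$ into polynomial bounds for all derivatives: the reproducing-kernel estimate controls $f$ only on the real line, whereas Cauchy's formula requires control on circles that leave $\R$, so you must first invoke the standard Phragm\'{e}n--Lindel\"{o}f (Plancherel--P\'{o}lya) fact that an entire function of exponential type that is polynomially bounded on $\R$ remains polynomially bounded on horizontal strips. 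The paper sidesteps this by citing instead that the de Branges space $\mc{H}(E_{\alpha})$ is closed under differentiation. With that standard supplement your argument is complete.
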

In other words, when finding $(\mathbb{EP}1)(\alpha, \beta\,; 1\,; 1)$, one can take the infimum over the subclass of the entire functions $f$ of exponential type at most $1$ whose restriction to $\R$ belongs to $\mc{S}(\R)$ (which, by the classical Paley-Wiener theorem, is equivalent to saying that $f \in \mc{S}(\R)$ and ${\rm supp}\big(\widehat{f}\big) \subset \big[-\tfrac{1}{2\pi}, \tfrac{1}{2\pi}\big]$).

\begin{proof} Let $\varphi \in C^{\infty}(\R)$ be an even function such that ${\rm supp}(\varphi)  \subset [-1,1]$; $\varphi \geq 0$; $\widehat{\varphi} \geq 0$; and $\int_{\R} \varphi(y) \, \d y = 1$. As usual, for $\varepsilon >0$, set $\varphi_{\varepsilon}(y):= \tfrac{1}{\varepsilon} \varphi\big( \tfrac{y}{\varepsilon}\big)$. Then $\widehat{\varphi_{\varepsilon}}(x) = \widehat{\varphi}(\varepsilon x)$, and we see that $0 \leq \widehat{\varphi_{\varepsilon}}(x)  \leq \widehat{\varphi_{\varepsilon}}(0) =1$ and $\lim_{\varepsilon \to 0} \widehat{\varphi_{\varepsilon}}(x)  = 1$ for all $x \in \R$. 

\smallskip

Let $0 \neq f \in \mc{H}_{\alpha}(1; 1)$ be an extremizer for $(\mathbb{EP}1)(\alpha, \beta\,; 1\,; 1)$ (being an extremizer here is not really crucial, it would suffice to take a near-extremizer in this argument). Set 
$$f_{\varepsilon}(x) := f(x)  \,\widehat{\varphi_{\varepsilon}}(x).$$

\smallskip

We claim that $f_{\varepsilon} \in \mc{S}(\R)$. In fact, note that 
$f^{(n)}_{\varepsilon} =\sum_{k=0}^{n}  \binom{n}{k} f^{(k)} \, \widehat{\varphi_{\varepsilon}}^{(n-k)}$ and recall the fact that the de Branges space ${\mc H}(E_{\alpha})$ (equal to $\mc{H}_{\alpha}(1; 1)$ as sets, with norms differing by a multiplicative constant) is closed under differentiation; see \cite[Theorem 20]{CL1}. To verify our claim, it then suffices to prove that any $h \in {\mc H}(E_{\alpha})$ has at most polynomial growth. For this, recall the pointwise bound \eqref{20230221_15:39}
\begin{align*}
|h(x)| \leq \|h\|_{\mc{H}(E_{\alpha})}\, K_{\alpha}(x, x)^{1/2}\,,
\end{align*}
and the fact that $x \mapsto K_{\alpha}(x, x)$ is even. For $x >0$, using \eqref{Intro_Def_K}, \eqref{20221130_14:53} and \eqref{20230210_14:02}, we have
\begin{align*}
\pi K_{\alpha}(x,x) & = A_{\alpha}(x)^2 +  B_{\alpha}(x)^2 - \frac{(2\alpha +1)A_{\alpha}(x)B_{\alpha}(x) }{x}\\
& = \frac{2^{2\alpha} \,\Gamma(\alpha+1)^2}{x^{2\alpha +1}} \left(x\, J_{\alpha}(x)^2  + x\, J_{\alpha+1}(x)^2  - (2\alpha +1) J_{\alpha}(x) \, J_{\alpha+1}(x)\right).
\end{align*}
From the well-known asymptotic 
$$J_{\alpha}(x) = \left(\frac{2}{\pi x}\right)^{1/2} \cos\left( x - \frac{\alpha \pi}{2} - \frac{
\pi}{4}\right) + O_{\alpha}\big(x^{-3/2}\big)\,,$$
one gets that $K_{\alpha}(x,x) \sim x^{-2\alpha -1}$ for $x$ large.

\smallskip

We now observe that $\widehat{f_{\varepsilon}}  = \widehat{f}*\varphi_{\varepsilon}$. If $\alpha < -\frac12$, note that $\widehat{f}$ is in principle only a tempered distribution, but nevertheless $\widehat{f}*\varphi_{\varepsilon}$ is a function; see \cite[Theorem 3.13, Chapter I]{SW}. From the Paley-Wiener Theorem (one needs the version for distributions in \cite[Theorem 1.7.5 and Theorem 1.7.7]{Hor} if $\alpha < -\frac12$) we have ${\rm supp}\big(\widehat{f}\big) \subset \big[-\tfrac{1}{2\pi}, \tfrac{1}{2\pi}\big]$ and this leads us to ${\rm supp}\big(\widehat{f_{\varepsilon}}\big) \subset  \big[-\tfrac{1}{2\pi} - \varepsilon,\,  \tfrac{1}{2\pi} + \varepsilon \big]$. We now have just to dilate back by a factor that tends to $1$, by considering 
$$g^{\varepsilon}(x) := f_{\varepsilon}\left(\left(\frac{\tfrac{1}{2\pi} }{\tfrac{1}{2\pi}+ \varepsilon}\right)x\right).$$
Then ${\rm supp}\big(\widehat{g^{\varepsilon}}\big) \subset \big[-\tfrac{1}{2\pi}, \tfrac{1}{2\pi}\big]$. By dominated convergence (both in the numerator and denominator) note that 
$$\lim_{\varepsilon \to 0} \, \frac{\int_{\R} |g^{\varepsilon}(x)|^2 \, |x|^{2\alpha +1}\,\d x}{\int_{\R} |g^{\varepsilon}(x)|^2 \, |x|^{2\beta +1}\,\d x} = \frac{\int_{\R} |f(x)|^2 \, |x|^{2\alpha +1}\,\d x}{\int_{\R} |f(x)|^2 \, |x|^{2\beta +1}\,\d x}\,,$$
as desired.
\end{proof}

\subsection{Proof of Theorem \ref{Thm_continuity}} Let $\{\alpha_n\}_{n\geq 1}$ and $\{\beta_n\}_{n\geq 1}$ be sequences such that $\alpha_n \geq \beta_n >-1$ for all $n$, $\alpha_n \to \alpha$ and $\beta_n \to \beta$ as $n \to \infty$. We want to show that $(\mathbb{EP}1)(\alpha_n, \beta_n\,; 1\,; 1) \to (\mathbb{EP}1)(\alpha, \beta\,; 1\,; 1)$ as $n \to \infty$. The case $\alpha = \beta$ will follow from Theorem \ref{Thm5 - Asympt} and we exclude this case from the discussion below. Let us henceforth assume that $\alpha >\beta$.

\subsubsection{Upper bound} Given $\varepsilon >0$, by Lemma \ref{Lem21_aprox_Schwartz} there exists $\psi \in \mc{S}(\R)$ of exponential type at most $1$ with
$$\Phi_{\alpha, \beta}(\psi):= \frac{\int_{\R} |\psi(x)|^2 \, |x|^{2\alpha +1}\,\d x}{\int_{\R} |\psi(x)|^2 \, |x|^{2\beta +1}\,\d x} \leq (\mathbb{EP}1)(\alpha, \beta\,; 1\,; 1) + \varepsilon.$$
For such a $\psi$ we have $\Phi_{\alpha_n, \beta_n}(\psi) \to \Phi_{\alpha, \beta}(\psi)$ as $n\to \infty$ by the dominated convergence theorem. This plainly leads us to 
\begin{equation}\label{20230221_16:39}
\limsup_{n\to \infty}\ (\mathbb{EP}1)(\alpha_n, \beta_n\,; 1\,; 1) \leq (\mathbb{EP}1)(\alpha, \beta\,; 1\,; 1).
\end{equation}

\subsubsection{Lower bound} From Lemma \ref{Lem21_aprox_Schwartz}, let $\{\psi_n\}_{n\geq 1} \subset \mc{S}(\R)$ be a sequence of functions of exponential type at most $1$ such that 
\begin{equation}\label{20230222_11:05}
\Phi_{\alpha_n, \beta_n}(\psi_n) \leq (\mathbb{EP}1)(\alpha_n, \beta_n\,; 1\,; 1)  + \tfrac{1}{n}.
\end{equation}
Let us normalize $\psi_n$ so that 
\begin{equation}\label{20230221_15:26}
\int_{\R} |\psi_n(x)|^2 \,|x|^{2\beta+1} \, \d x = 1.
\end{equation}
The sequence $\{\psi_n\}_{n\geq 1}$ is then bounded in $\mc{H}_{\beta}(1; 1)$ and hence, passing to a subsequence if necessary, we may assume that $\psi_n \rightharpoonup \psi$ for some $\psi \in \mc{H}_{\beta}(1\,; 1)$. Since $\mc{H}_{\beta}(1; 1)$ is a reproducing kernel Hilbert space, we have already seen in \eqref{20230221_15:24} and \eqref{20230221_15:39} that this implies that $\psi_n \to \psi$ pointwise everywhere, and that $\psi_n$ is uniformly bounded in compact subsets of $\R$. By \eqref{20230221_15:26} and Fatou's lemma we get
\begin{equation}\label{20230221_15:27}
\int_{\R} |\psi(x)|^2 \,|x|^{2\beta+1} \, \d x \leq 1.
\end{equation}
We now state the following auxiliary result to this proof.

\begin{lemma}\label{Lem22_auxi_res_internal}
Let $\alpha >\beta$. Given any $\varepsilon>0$, there exists $R = R(\varepsilon)$ such that 
\begin{equation}\label{20230221_16:44}
\int_{[-R,R]^c} |\psi_n(x)|^2 \,|x|^{2\beta+1} \, \d x < \varepsilon\,,
\end{equation}
and
\begin{equation}\label{20230221_16:29}
\int_{[-R,R]^c} |\psi_n(x)|^2 \,|x|^{2\beta_n+1} \, \d x < \varepsilon\,,
\end{equation}
for $n \geq n_0(\varepsilon)$.
\end{lemma}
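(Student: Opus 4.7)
The plan is to \textbf{bootstrap}: first show that $c_n:=\int_\R|\psi_n(x)|^2|x|^{2\beta_n+1}\,\d x$ is uniformly bounded in $n$, and then derive both tail estimates as a consequence of the bound on $\int_\R|\psi_n|^2|x|^{2\alpha_n+1}\,\d x$ which follows from the hypothesis. The crucial input is the uniform gap $\alpha_n-\beta_n\ge(\alpha-\beta)/2>0$ (valid for $n$ large), which allows conversion of $\beta$- or $\beta_n$-weighted tail integrals into $\alpha_n$-weighted ones at the cost of a factor $R^{-(\alpha-\beta)}$.

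First I would note that $\Phi_{\alpha_n,\beta_n}(\psi_n)$ is uniformly bounded (say by $C$), combining \eqref{20230221_16:39} and \eqref{20230222_11:05}. To control $c_n$, the normalization \eqref{20230221_15:26} and Lemma \ref{Sec5_rel_facts}(ii) yield $\|\psi_n\|_{\mc{H}(E_\beta)}^2=c_\beta$, so the reproducing kernel inequality gives the uniform pointwise estimate $|\psi_n(x)|^2\le c_\beta K_\beta(x,x)$. Since $x\mapsto K_\beta(x,x)$ is continuous on $\R$ and $\int_{|x|\le R_0}|x|^{2\beta_n+1}\,\d x = 2R_0^{2\beta_n+2}/(2\beta_n+2)$ is bounded in $n$ (as $\beta_n\to\beta>-1$), this bounds $\int_{|x|\le R_0}|\psi_n|^2|x|^{2\beta_n+1}\,\d x$ uniformly by some $K_{R_0}$. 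For the tail, for $R_0\ge 1$ and $n$ large,
\begin{equation*}
\int_{|x|>R_0}|\psi_n|^2|x|^{2\beta_n+1}\,\d x\le R_0^{2(\beta_n-\alpha_n)}\int_\R|\psi_n|^2|x|^{2\alpha_n+1}\,\d x = R_0^{2(\beta_n-\alpha_n)}\Phi_{\alpha_n,\beta_n}(\psi_n)\,c_n\le CR_0^{-(\alpha-\beta)}c_n.
\end{equation*}
Choose $R_0$ so large that $CR_0^{-(\alpha-\beta)}\le 1/2$; absorbing the term on the right yields $c_n\le 2K_{R_0}=:K$.

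With $\sup_n c_n\le K$ established, the desired tail estimates follow almost immediately. For $R\ge R_0$ and $n$ large, the same weight-comparison argument gives
\begin{equation*}
\int_{[-R,R]^c}|\psi_n|^2|x|^{2\beta+1}\,\d x\le R^{2(\beta-\alpha_n)}\,\Phi_{\alpha_n,\beta_n}(\psi_n)\,c_n\le CK\,R^{-(\alpha-\beta)},
\end{equation*}
and the identical argument with $\beta$ replaced by $\beta_n$ (using $\beta_n-\alpha_n\le-(\alpha-\beta)/2$) yields a bound of the same decay order. Choosing $R$ sufficiently large makes each bound less than $\varepsilon$, giving \eqref{20230221_16:44} and \eqref{20230221_16:29}.

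The main obstacle is the uniform bound on $c_n$: \emph{a priori} the sequence is only normalized against the weight $|x|^{2\beta+1}$, and when $\beta_n<\beta$ the ratio $|x|^{2\beta_n-2\beta}$ blows up at the origin, so a direct weight-comparison cannot control $c_n$. The reproducing-kernel pointwise bound on compact sets, combined with the tail bootstrap that leverages the gap $\alpha_n-\beta_n$, is what overcomes this difficulty.
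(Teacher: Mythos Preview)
Your proof is correct and takes a genuinely different route from the paper's. The paper argues by contradiction for each of \eqref{20230221_16:29} and \eqref{20230221_16:44} separately: assuming some tail stays above $\varepsilon$ along a subsequence, it bounds the numerator of $\Phi_{\alpha_n,\beta_n}(\psi_n)$ below by $R^{2\gamma}\varepsilon$ and the denominator above by a constant (using the uniform pointwise bound on $[-R,R]$ to show $\int_{[-R,R]}|\psi_n|^2|x|^{2\beta_n+1}\,\d x\le 1+\varepsilon$), contradicting \eqref{20230221_16:39} when $R$ is large. Your approach is direct: you first establish the uniform bound $\sup_n c_n<\infty$ via a bootstrap (reproducing-kernel control on $[-R_0,R_0]$ plus tail absorption using the gap $\alpha_n-\beta_n\ge(\alpha-\beta)/2$), and then both tail estimates drop out immediately from the weight comparison $|x|^{2\beta+1}\le R^{2(\beta-\alpha_n)}|x|^{2\alpha_n+1}$ on $[-R,R]^c$. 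The two arguments use the same three ingredients (uniform pointwise bound from the reproducing kernel, the positive gap $\alpha-\beta$, and the bound on $\Phi_{\alpha_n,\beta_n}$), but your version is more constructive and handles both inequalities in one stroke, while the paper's contradiction argument avoids ever naming $c_n$ explicitly.
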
  

Let us assume the validity of Lemma \ref{Lem22_auxi_res_internal} in order to conclude. From \eqref{20230221_15:26} and \eqref{20230221_16:44} we get 
\begin{equation*}
\int_{[-R,R]} |\psi_n(x)|^2 \,|x|^{2\beta+1} \, \d x > 1 - \varepsilon.
\end{equation*}
$n \geq n_0(\varepsilon)$, and by the dominated convergence theorem we have
\begin{equation*}
\int_{[-R,R]} |\psi(x)|^2 \,|x|^{2\beta+1} \, \d x \geq 1 - \varepsilon.
\end{equation*}
Since $\varepsilon >0$ was arbitrary, in light of \eqref{20230221_15:27} we conclude that 
\begin{equation}\label{20230221_15:46}
\int_{\R} |\psi(x)|^2 \,|x|^{2\beta+1} \, \d x = 1.
\end{equation}
Using \eqref{20230221_16:29}, \eqref{20230221_15:46}, dominated convergence and Fatou's lemma, one can also conclude that 
\begin{equation}\label{20230221_16:36}
\lim_{n \to \infty} \int_{\R} |\psi_n(x)|^2 \,|x|^{2\beta_n+1} \, \d x = 1.
\end{equation}

\smallskip

From \eqref{20230221_16:36}, Fatou's lemma and  \eqref{20230221_15:46} we conclude that  
\begin{align}\label{20230221_16:40}
\begin{split}
\liminf_{n \to \infty} \ (\mathbb{EP}1)(\alpha_n, \beta_n\,; 1\,; 1)  & = \liminf_{n \to \infty} \int_{\R} |\psi_n(x)|^2 \,|x|^{2\alpha_n+1} \, \d x \\
& \geq \int_{\R} |\psi(x)|^2 \,|x|^{2\alpha+1} \, \d x \geq (\mathbb{EP}1)(\alpha, \beta\,; 1\,; 1).
\end{split}
\end{align}
The desired continuity then follows from \eqref{20230221_16:39} and \eqref{20230221_16:40}.

\subsection{Proof of Lemma \ref{Lem22_auxi_res_internal}}
We first prove \eqref{20230221_16:29}. Assume the result is not true, i.e. that there exists $\varepsilon >0$ for which no $R$ works. Then, for any $R>1$, we have
\begin{equation}\label{20230222_09:28}
\limsup_{n \to \infty} \int_{[-R,R]^c} |\psi_n(x)|^2 \,|x|^{2\beta_n+1} \, \d x \geq \varepsilon.
\end{equation}
Since $\alpha >\beta$, we may take $\gamma = \tfrac{\alpha - \beta}{2}>0$ and assume without loss of generality that $\alpha_n - \beta_m \geq \gamma$ for any $n,m$. Therefore
\begin{align}\label{20230222_09:25}
\int_{[-R,R]^c} |\psi_n(x)|^2 \,|x|^{2\alpha_n+1} \,\d x \geq R^{2\gamma} \int_{[-R,R]^c} |\psi_n(x)|^2 \,|x|^{2\beta_n+1} \,\d x.
\end{align}
Since $\psi_n$ is uniformly bounded in compact subsets of $\R$, we have 
\begin{equation*}
 \left| \int_{[-R,R]} |\psi_n(x)|^2 \,|x|^{2\beta_n+1} \,\d x -  \int_{[-R,R]} |\psi_n(x)|^2 \,|x|^{2\beta+1} \,\d x\right| \leq \varepsilon
\end{equation*}
for $n$ large. In light of \eqref{20230221_15:26} this gives us
\begin{equation}\label{20230222_09:26}
\int_{[-R,R]} |\psi_n(x)|^2 \,|x|^{2\beta_n+1} \,\d x   \leq 1 + \varepsilon
\end{equation}
for $n$ large. From \eqref{20230221_16:39}, \eqref{20230222_11:05}, \eqref{20230222_09:25} and \eqref{20230222_09:26}, and then \eqref{20230222_09:28}, we have
\begin{align}\label{20230222_11:39}
\begin{split}
(\mathbb{EP}1)(\alpha, \beta\,; 1\,; 1) & \geq \limsup_{n\to \infty} \ (\mathbb{EP}1)(\alpha_n, \beta_n\,; 1\,; 1) = \limsup_{n\to \infty} \Phi_{\alpha_n, \beta_n}(\psi_n) \\
& \geq \limsup_{n\to \infty} \frac{R^{2\gamma} \int_{[-R,R]^c} |\psi_n(x)|^2 \,|x|^{2\beta_n+1} \,\d x}{(1 + \varepsilon) +\int_{[-R,R]^c} |\psi_n(x)|^2 \,|x|^{2\beta_n+1} \,\d x} \\
& \geq \frac{R^{2\gamma}}{\frac{1 + \varepsilon}{\varepsilon} + 1}.
\end{split}
\end{align}
This is a contradiction when $R$ is large.

\smallskip

The proof of \eqref{20230221_16:44} is in the same spirit. Assume it is not true, i.e. that there exists $\varepsilon >0$ such that for any $R>1$ we have
\begin{equation}\label{20230222_11:40}
\limsup_{n \to \infty} \int_{[-R,R]^c} |\psi_n(x)|^2 \,|x|^{2\beta+1} \, \d x \geq \varepsilon.
\end{equation}
In particular, let us also only consider $R > R(\varepsilon)$, with $R(\varepsilon)$ being such that \eqref{20230221_16:29} holds (that we already proved). In replacement of \eqref{20230222_09:25} we use
\begin{align}\label{20230222_11:41}
\int_{[-R,R]^c} |\psi_n(x)|^2 \,|x|^{2\alpha_n+1} \,\d x \geq R^{2\gamma} \int_{[-R,R]^c} |\psi_n(x)|^2 \,|x|^{2\beta+1} \,\d x.
\end{align}
The bound \eqref{20230222_09:26} continues to hold for $n$ large. Together with \eqref{20230221_16:29} this leads us to 
\begin{equation} \label{20230222_11:47}
\int_{\R} |\psi_n(x)|^2 \,|x|^{2\beta_n+1} \,\d x   \leq 1 + 2\varepsilon
\end{equation}
for $n$ large. Similarly to \eqref{20230222_11:39}, now using \eqref{20230222_11:40}, \eqref{20230222_11:41} and \eqref{20230222_11:47}, we obtain

\begin{align*}
\begin{split}
(\mathbb{EP}1)(\alpha, \beta\,; 1\,; 1) & \geq \limsup_{n\to \infty} \ (\mathbb{EP}1)(\alpha_n, \beta_n\,; 1\,; 1) = \limsup_{n\to \infty} \Phi_{\alpha_n, \beta_n}(\psi_n) \\
& \geq \limsup_{n\to \infty} \frac{R^{2\gamma} \int_{[-R,R]^c} |\psi_n(x)|^2 \,|x|^{2\beta+1} \,\d x}{1 + 2\varepsilon } \\
& \geq \frac{R^{2\gamma} \, \varepsilon}{1 + 2\varepsilon}\,,
\end{split}
\end{align*}
which is again a contradiction when $R$ is large. This concludes the proof of Lemma \ref{Lem22_auxi_res_internal}.

\section{Radial non-increasing delta majorant: proof of Theorem \ref{RBS=FU}}
Let $M \in \mc{R}^+(d\, ; 2\delta)$ and assume that $\int_{\R^d} M(\x)\,\d\x < \infty$. By Lemma \ref{type-to-ntype} we know that $M$ is the lift ${\mc L}_d(f)$ of a real entire function $f: \C \to \C$ of exponential type at most $2\delta$ that is also radial (i.e. even) non-increasing with
\begin{equation}\label{20221114_11:36}
 \int_{\R^d} M(\x)\,\d\x = \tfrac{1}{2}\,\omega_{d-1}  \int_{\R} f(x)\,|x|^{d-1}\,\d x. 
\end{equation}
The fact that $M({\bf 0}) \geq 1$ implies that $f(0) \geq 1$, and of course we may assume that $f(0) = 1$ (otherwise we could dilate down and get a smaller integral). Let $g = f'$. Since $f$ has exponential type at most $2\delta$ and belongs to $L^1(\R)$ by \eqref{20221114_11:36}, by a classical result of Plancherel and P\'{o}lya \cite{PP}, so does $g$. We may then write
\begin{align}\label{FTC}
f(x) = \int_{-\infty}^x g(u)\,\d u.
\end{align}
%and $\lim_{x \to \infty} f(x) = \int_{-\infty}^{\infty} g(u)\,\d u =  \widehat{g}(0) = 0$. 

Using \eqref{FTC} and Fubini's theorem we arrive at
\begin{align}\label{Min_object}
\int_{\R} f(x)\,|x|^{d-1}\,\d x = \frac{1}{d} \int_{\R} |g(u)|\,|u|^d\,\d u.
\end{align}
Note that $H(u) = -u\, g(u)$ is a function of exponential type at most $2\delta$, that is non-negative on $\R$ and belongs to $L^1(\R)$. By Krein's decomposition in Lemma \ref{Krein_dec_lem} we may write $H(z) = z^2 \,h(z) \,h^*(z)$, where $h$ has exponential type at most $\delta$. This implies that
\begin{equation}\label{20221114_12:09}
g(z) = -z \,h(z) \,h^*(z).
\end{equation}
Since $g$ is odd, the condition that $f(0) = 1$ means that 
$$\int_{\R} |g(u)|\,\d u = 2\,,$$
which, in light of \eqref{20221114_12:09}, implies 
\begin{align}\label{Restatement_1}
\int_{\R} |h(u)|^2\,|u|\,\d u = 2.
\end{align}
From \eqref{20221114_11:36}, \eqref{Min_object} and \eqref{20221114_12:09} the quantity we want to minimize is
\begin{align*}
\frac{\omega_{d-1} }{2d}\int_{\R} |h(u)|^2\,|u|^{d+1}\,\d u\,,
\end{align*}
and we plainly conclude that 
$$(\mathbb{EP}2)(d\, ; \delta)\geq \frac{ \omega_{d-1}}{d} \, (\mathbb{EP}1)(\tfrac{d}{2}, 0\, ; 1\,; \delta).$$

Conversely, we have seen that $(\mathbb{EP}1)(\tfrac{d}{2}, 0\, ; 1\,; \delta)$ admits an extremizer $h \in {\mc H}_{\frac{d}{2}}(1\,; \delta)$ that is even. With the normalization \eqref{Restatement_1}, we can reverse the steps above by defining $g$ as in \eqref{20221114_12:09}, then $f$ as in \eqref{FTC}, and then $M = {\mc L}_d(f)  \in \mc{R}^+(d\, ; 2\delta)$ with the help of Lemma \ref{type-to-ntype}. This shows that
$$(\mathbb{EP}2)(d\, ; \delta)\leq \frac{\omega_{d-1} }{d} \, (\mathbb{EP}1)(\tfrac{d}{2}, 0\, ; 1\,; \delta).$$
We therefore conclude that 
$$(\mathbb{EP}2)(d\, ; \delta) =  \frac{\omega_{d-1}}{d} \, (\mathbb{EP}1)(\tfrac{d}{2}, 0\, ; 1\,; \delta)\,,$$
and that extremizers exist for $(\mathbb{EP}2)(d\, ; \delta)$. 

\section {Asymptotics: proof of Theorem \ref{Thm5 - Asympt}}\label{Sec5_asym_ppp}
Throughout this section we work in dimension $d=1$ and exponential type $\delta =1$. We may also assume that $\alpha > \beta$.
\subsection{Upper bound} \label{Upper bounds - example}Recall the functions $A_{\alpha}$ and $B_{\alpha}$ defined in \eqref{20220816_17:43} and \eqref{20221121_10:21}, respectively. Note that $A_{\alpha}(0)=1$ and $B_{\alpha}(0)=0$. Within the framework of \S \ref{Sub_DB_spaces}, let us consider the test function
$$f(z) = K_{\alpha}(0, z) = \frac{B_{\alpha}(z)}{\pi z}.$$ 
From the rightmost identity in \eqref{20221130_14:53}, as $z\to 0$, we get $B_{\alpha}'(0) = 1/(2\alpha+2)$. Then, by \eqref{Lem17_ii}, 
\begin{align*}
\int_{\R} |f(x)|^2 \, |x|^{2\alpha +1}\,\d x & = (c_{\alpha})^{-1} \, \|f\|_{\mc{H}(E_{\alpha})}^2 = (c_{\alpha})^{-1} \, K_{\alpha}(0, 0) = \frac{B'_{\alpha}(0)}{\pi \, c_{\alpha}} = \frac{2^{2\alpha +1} \, \Gamma(\alpha +1)^2}{\pi^2 (2\alpha +2)}.
\end{align*}
Also, from \cite[p. 683, \S 6.574, Eq. 2]{GR}
\begin{align*}
\int_{\R} |f(x)|^2 \, |x|^{2\beta +1}\,\d x & = \frac{2^{2\alpha +1} \, \Gamma(\alpha +1)^2}{\pi^2} \int_{0}^{\infty} |J_{\alpha +1}(x)|^2 \, |x|^{2\beta - 2\alpha -1}\,\d x\\
& = \frac{2^{2\alpha +1} \, \Gamma(\alpha +1)^2}{\pi^2} \frac{\Gamma(2\alpha - 2\beta +1)\, \Gamma(\beta +1)}{ 2^{2\alpha - 2\beta +1} \, \Gamma(\alpha - \beta+1)^2\, \Gamma(2\alpha - \beta +2)}. 
\end{align*}
Therefore we find
\begin{align*}
\frac{\int_{\R} |f(x)|^2 \, |x|^{2\alpha +1}\,\d x}{\int_{\R} |f(x)|^2 \, |x|^{2\beta +1}\,\d x} & = \frac{ 2^{2\alpha - 2\beta} \, \Gamma(\alpha - \beta+1)^2\, \Gamma(2\alpha - \beta +2)}{(\alpha +1)\, \Gamma(2\alpha - 2\beta +1)\, \Gamma(\beta +1)}\\
& = 2^{2\alpha - 2\beta} \, \left(\frac{\beta +1}{\alpha +1}\right)\, 
\frac{ \Gamma(\alpha - \beta+1)^2\, \Gamma(2\alpha - \beta +2)}{\Gamma(2\alpha - 2\beta +1)\, \Gamma(\beta +2)}\,,
\end{align*}
and hence
\begin{align*}
(\mathbb{EP}1)(\alpha, \beta\,; 1\,; 1) \leq 2^{2\alpha - 2\beta} \, \left(\frac{\beta +1}{\alpha +1}\right)\, 
\frac{ \Gamma(\alpha - \beta+1)^2\, \Gamma(2\alpha - \beta +2)}{\Gamma(2\alpha - 2\beta +1)\, \Gamma(\beta +2)}.
\end{align*}
This leads us to
\begin{align}\label{20221213_10:05}
\log (\mathbb{EP}1)(\alpha, \beta\,; 1\,; 1) \leq 2(\alpha - \beta) \log 2 \,+ \log \left(\frac{\beta +1}{\alpha +1}\right)\, +\log\left(\frac{ \Gamma(\alpha - \beta+1)^2\, \Gamma(2\alpha - \beta +2)}{\Gamma(2\alpha - 2\beta +1)\, \Gamma(\beta +2)}\right).
\end{align}

Let us denote momentarily by $U_0(\alpha, \beta)$ the function appearing on the right-hand side of \eqref{20221213_10:05}. We now verify the following proposition.
\begin{proposition}
For $\alpha > \beta > -1$ we have
\begin{equation}\label{20221213_12:12}
U_0(\alpha, \beta) = 2(\alpha - \beta)\log(\alpha +2) + \log \left(\frac{\beta +1}{\alpha +1}\right) + O(\alpha - \beta)\,,
\end{equation}
where the implied constant is universal.
\end{proposition}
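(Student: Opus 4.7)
The plan is to set $t = \alpha - \beta > 0$ and $a = \alpha + 2 > 1$, and observe that the constraints $\alpha > \beta > -1$ translate to $a - t = \beta + 2 > 1$. Since the term $\log\bigl((\beta+1)/(\alpha+1)\bigr)$ appears in both sides of \eqref{20221213_12:12}, the identity reduces to showing
\begin{equation*}
2t\log 2 \;+\; \log\frac{\Gamma(t+1)^2}{\Gamma(2t+1)} \;+\; \bigl[\log \Gamma(a+t) - \log\Gamma(a-t)\bigr] \;=\; 2t\log a + O(t),
\end{equation*}
with a universal implied constant. I would split this into two independent claims: \textbf{(A)} $\log\frac{\Gamma(t+1)^2}{\Gamma(2t+1)} = -2t\log 2 + O(t)$, and \textbf{(B)} $\log\Gamma(a+t) - \log\Gamma(a-t) = 2t\log a + O(t)$, whose sum yields the target.

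Claim (A) is a statement about $t > 0$ only. For $t$ bounded (say $t \leq 1$), the function $f(t) := 2\log\Gamma(t+1) - \log\Gamma(2t+1) + 2t\log 2$ is smooth near $0$ with $f(0)=0$, so $f(t) = O(t)$ with a universal constant by the mean value theorem applied to $f'(t) = 2\psi(t+1) - 2\psi(2t+1) + 2\log 2$, which is bounded on $[0,1]$. For $t \geq 1$, I would use Stirling $\log\Gamma(x) = x\log x - x + O(\log x)$ directly: the leading $t\log t$ contributions cancel to give the $-2t\log 2$ main term plus a remainder $O(\log t) = O(t)$. A cleaner alternative is to invoke Legendre's duplication formula, which rewrites the ratio in terms of $\Gamma(t+1)/\Gamma(t+3/2)$ and then Stirling controls this immediately.

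Claim (B) is the heart of the matter. For $t \leq 1/2$, the mean value theorem gives $\log\Gamma(a+t) - \log\Gamma(a-t) = 2t\,\psi(\xi)$ for some $\xi \in (a-t, a+t) \subset (1/2, \infty)$; since $\psi(\xi) = \log \xi + O(1)$ uniformly for $\xi \geq 1/2$ and $\log(\xi/a) = O(1)$ whenever $t/a \leq 1/2$, one concludes $\psi(\xi) = \log a + O(1)$ and the bound follows. For $t \geq 1/2$, I would apply Stirling's expansion $\log\Gamma(x) = (x - \tfrac{1}{2})\log x - x + \tfrac{1}{2}\log(2\pi) + O(1/x)$ (valid for $x \geq 1$, hence throughout since $a-t > 1$) and rearrange:
\begin{equation*}
\log\Gamma(a+t) - \log\Gamma(a-t) = 2t\log a + (a+t-\tfrac{1}{2})\log(1 + t/a) - (a-t-\tfrac{1}{2})\log(1 - t/a) - 2t + O(1),
\end{equation*}
where the $O(1)$ absorbs into $O(t)$ because $t \geq 1/2$. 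The first middle term is trivially bounded by $(a+t)\cdot(t/a) \leq 2t$. The difficulty lies in the second middle term $-(a-t-\tfrac{1}{2})\log(1-t/a) = (y-\tfrac{1}{2})\log(a/y)$ with $y = a-t \in (1, a]$, which in the regime $y \to 1$ (i.e., $\beta \to -1$) is individually of order $\log a$ and therefore a priori not $O(t)$. The main obstacle is to control this apparent blow-up: I would invoke the elementary inequality $y\log(a/y) \leq a - y$ valid for all $y \in (0, a]$ (equivalent, via $u = y/a$, to $u(1 - \log u) \leq 1$ on $(0, 1]$, which follows because the left-hand side has derivative $-\log u \geq 0$ there and equals $1$ at $u=1$). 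This pins the problematic term to the interval $[0, t]$, and assembling all pieces proves (B) with a universal constant and completes the proof.
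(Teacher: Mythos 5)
Your proof is correct. The reduction to the two claims (A) and (B) is an accurate restatement of the target, claim (A) is routine and correctly handled in both regimes, and in claim (B) the one potentially dangerous term $(y-\tfrac12)\log(a/y)$ with $y=a-t\to 1$ is genuinely tamed by the inequality $u(1-\log u)\leq 1$, which you verify correctly; all constants involved are absolute.

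The paper's proof uses the same two basic tools — the mean value theorem with digamma/Stirling estimates when $\alpha-\beta$ is small, and the refined Stirling expansion when it is large — but organizes the computation differently. It keeps all four Gamma factors together in a single five-term expression $U_2(x,y)$ with $x=\alpha-\beta$, $y=\alpha+1$, and case-splits first on $x\lessgtr 1$ and then, within the large-$x$ case, on $x\lessgtr (y+1)/2$; the singular corner $\beta\to -1$ is handled there by observing that $a\mapsto a\log a$ is bounded on $(0,1]$. Your decomposition into the one-variable ratio $\Gamma(t+1)^2/\Gamma(2t+1)$ (producing exactly the $-2t\log 2$ that cancels the explicit $2t\log 2$ in $U_0$) and the two-variable ratio $\Gamma(a+t)/\Gamma(a-t)$ (producing the main term $2t\log a$) makes the source of each contribution more transparent, and your sharper form $y\log(a/y)\leq a-y$ of the same elementary fact lets you treat all of claim (B) for $t\geq\tfrac12$ uniformly in $a$, avoiding the paper's second sub-split. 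The trade-off is essentially cosmetic: both arguments are elementary and of comparable length, but yours localizes the difficulty more cleanly to a single term.
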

\begin{proof}
In order to verify \eqref{20221213_12:12}, if suffices to verify that 
\begin{equation*}
U_1(\alpha, \beta) :=\log\left(\frac{ \Gamma(\alpha - \beta+1)^2\, \Gamma(2\alpha - \beta +2)}{\Gamma(2\alpha - 2\beta +1)\, \Gamma(\beta +2)}\right) - 2(\alpha - \beta)\log(\alpha +2)  = O(\alpha - \beta).
\end{equation*}
Note that 
\begin{align*}
U_1(\alpha, \beta)  = 2 \log  \Gamma(\alpha - \beta+1) & + \log \Gamma(2\alpha - \beta +2) \\
& - \log \Gamma(2\alpha - 2\beta +1) - \log \Gamma(\beta +2) - 2(\alpha - \beta)\log(\alpha +2) .
\end{align*}
Letting $x = \alpha - \beta$ and $y = \alpha + 1$, we note that $y > x > 0$ and rewrite $U_1(\alpha, \beta) = U_2(x,y)$ with 
\begin{align*}
U_2(x, y)  = 2 \log  \Gamma(x+1) + \log \Gamma(x + y +1) - \log \Gamma(2x +1) - \log \Gamma(y - x +1) - 2x\log(y +1).
\end{align*}
We want to show that $U_2(x, y)  = O(x)$, and towards this goal we split our analysis into two cases. Note that $U_2(0, y)=0$.  

\smallskip

\noindent {\it Case 1: $0 < x \leq 1$}. We differentiate with respect to $x$ and show that this partial derivative is uniformly bounded. The claim then follows by the mean value inequality. We have
\begin{equation*}
\frac{\partial}{\partial x} U_2(x, y)  = 2 \frac{\Gamma'}{\Gamma} (x+1) +  \frac{\Gamma'}{\Gamma} (x+y +1) - 2 \frac{\Gamma'}{\Gamma} (2x +1) +  \frac{\Gamma'}{\Gamma}(y-x+1) - 2\log(y+1).
\end{equation*}
We now use Stirling's formula
$$\frac{\Gamma'}{\Gamma}(s) = \log s + O(|s|^{-1})\,,$$
that holds uniformly in the range $-\pi + \varepsilon < {\rm arg}(s) < \pi - \varepsilon$, for any fixed $\varepsilon >0$ (which is the case; we are always in the real line and at least $1$). Hence
\begin{align*}
\frac{\partial}{\partial x} U_2(x, y)  &= 2 \log (x+1) +  \log (x+y +1) - 2\log (2x +1) + \log (y-x+1) - 2\log(y+1) + O(1)\\
& = \log \left(1 + \frac{x}{y+1}\right) +  \log \left(1 - \frac{x}{y+1}\right) + 2 \log (x+1)- 2\log (2x +1) + O(1)\\
& = O\left( \frac{x}{y+1}\right)  + O(x) + O(1) \\
& = O(1)\,,
\end{align*}
for $0 < x \leq 1$, independently of $y >x$. 

\smallskip

\noindent {\it Case 2: $x> 1$}. In this case, we use Stirling's formula in the following form
$$\log \Gamma(s) = (s - \tfrac12)\log s - s + O(1),$$
that also holds uniformly in the range $-\pi + \varepsilon < {\rm arg}(s) < \pi - \varepsilon$, for any fixed $\varepsilon >0$. We get
\begin{align*}
U_2(x, y)  & = (2x+1)\log(x+1) - (2x+2)  + (x + y + \tfrac12)\log(x+y+1) - (x+y+1) \\
& \ \ \ \ \  - (2x + \tfrac12)\log(2x+1)  + (2x+1) - (y-x+\tfrac12)\log(y - x+1) + (y-x+1) \\
& \ \ \ \ \  - 2x\log(y+1) + O(1)\\
& = (2x+1)\log(x+1)+ (x + y + \tfrac12)\log(x+y+1)\\
& \ \ \ \ \  - (2x + \tfrac12)\log(2x+1) - (y-x+\tfrac12)\log(y - x+1)  - 2x\log(y+1) + O(x)\\
& = -(2x+1) \log\left(2 - \frac{1}{x+1}\right) +\tfrac12 \log(2x+1) + (x + y + \tfrac12)\log(x+y+1)\\
&  \ \ \ \ \ - (y-x+\tfrac12)\log(y - x+1) - 2x\log(y+1) + O(x)\\
& = (x + y + \tfrac12)\log(x+y+1)- (y-x+\tfrac12)\log(y - x+1) - 2x\log(y+1) + O(x).
\end{align*}
If $x \leq (y+1)/2$ we rewrite this last expression as
\begin{align*}
&=  -(y + \tfrac12) \log \left( 1 - \frac{2x}{x + y + 1}\right) + x \log \left( 1 + \frac{x}{y + 1}\right) + x \log \left( 1 - \frac{x}{y + 1}\right)  + O(x) \\
 & = (y + \tfrac12) \, O\!\left( \frac{x}{y+1}\right) + x \,O\!\left( \frac{x}{y+1}\right) + O(x)\\
 & = O(x).
\end{align*}
If $x \geq (y+1)/2$, we rewrite it as
\begin{align*}
(x + y + 1)\log(x+y+1)- (y-x+1)\log(y - x+1) - 2x\log(y+1) + O(x).
\end{align*}
We then add and subtract $2x \log x$ to have
\begin{align*}
&= (x + y + 1)\log\left(\frac{y+x+1}{x}\right)- (y-x+1)\log\left(\frac{y-x+1}{x} \right) + O(x)\\
& = (x + y + 1)\log\left(\frac{y+x+1}{x}\right)- x\left(\frac{y-x+1}{x}\right)\log\left(\frac{y-x+1}{x} \right) + O(x).
\end{align*}
Noting that $a \mapsto a\log a$ is bounded for $0 < a \leq 1$ we conclude that the above is $O(x)$. This concludes the proof of the proposition.
\end{proof}

\subsection{Lower bound I: the case $\alpha > \beta \geq -\tfrac12$} \label{LB1_case1} As we saw in \S \ref{Upper bounds - example}, the upper bound is proved via a suitable example. The proof of the desired lower bound is more convoluted and we split our analysis into different scenarios; a major hurdle here is the singular range when $\alpha$ and $\beta$ are close to the endpoint $-1$, where the classical Fourier transform is not available.  

\smallskip

We first treat the situation where $\alpha > \beta \geq -\tfrac12$, where we can use uncertainty tools for the classical Fourier transform. In this range, our strategy is to use Lemma \ref{Lem9 - FU} and optimize our choice of parameters.

\subsubsection{Strategy} If $f \in \mc{H}_{\alpha}(1\,; 1)$, notice that $f \in L^2(\R)$ and it has a Fourier transform in the classical sense. Then, using Lemma \ref{Lem9 - FU} and the fact that $\beta \geq -\frac12$, we find 
\begin{align*}
\int_{\R} |f(x)|^2 & \,|x|^{2\beta +1} \,\d x  = \int_{[-s, s]} |f(x)|^2 \,|x|^{2\beta +1} \,\d x + \int_{[-s, s]^c} |f(x)|^2\, |x|^{2\beta +1} \,\d x\\
& \leq s^{2\beta +1} \int_{[-s, s]} |f(x)|^2 \,\d x + s^{2(\beta - \alpha)} \int_{[-s, s]^c} |f(x)|^2 \,|x|^{2\alpha +1} \,\d x\\
& \leq s^{2\beta +1}\,e^{Cr} \left(1 - e^{-Cs}\right) \int_{[-r,r]^c} |f(x)|^2\,\d x + s^{2(\beta - \alpha)} \int_{\R} |f(x)|^2\, |x|^{2\alpha +1} \,\d x\\
& \leq \frac{s^{2\beta +1}}{r^{2\alpha+1}}\,e^{Cr} \left(1 - e^{-Cs}\right) \int_{[-r,r]^c} |f(x)|^2\,|x|^{2\alpha +1}\, \d x + s^{2(\beta - \alpha)} \int_{\R} |f(x)|^2\, |x|^{2\alpha +1} \,\d x\\
& \leq \left(\frac{s^{2\beta +1}}{r^{2\alpha+1}}\,e^{Cr} \left(1 - e^{-Cs}\right)  + s^{2(\beta - \alpha)} \right) \int_{\R} |f(x)|^2\, |x|^{2\alpha +1} \,\d x.
\end{align*}
Hence
\begin{equation}\label{20230207_10:24}
(\mathbb{EP}1)(\alpha, \beta\,; 1\,; 1) \geq \left(\frac{s^{2\beta +1}}{r^{2\alpha+1}}\,e^{Cr} \left(1 - e^{-Cs}\right)  + s^{2(\beta - \alpha)} \right)^{-1}
\end{equation}
for any $r,s >0$. Our task is now to optimize the right-hand side. 

\subsubsection{Optimization and asymptotic analysis}  \label{Subsec_Opt_Asy_Ana}First we minimize a function of the type $r \mapsto e^{Cr}r^{-2\alpha -1}$. Taking logarithms we must minimize
$$g(r) := Cr - (2\alpha+1)\log r.$$
Such minimum occurs at $r_0 := (2\alpha +1)/C$. Plugging $r = r_0$ in our original expression \eqref{20230207_10:24}, we must then minimize the function 
$$h(s) := \frac{s^{2\beta +1}}{(2\alpha+1)^{2\alpha+1}}(Ce)^{2\alpha+1} \left(1 - e^{-Cs}\right)  + s^{2(\beta - \alpha)} 
.$$
This seems a bit harder to explicitly optimize in $s$. We make a simplification: using the fact that $1 - e^{-Cs}  \leq Cs$ for any $s>0$ we find that 
$$h_1(s) := \frac{C\, s^{2\beta +2}}{(2\alpha+1)^{2\alpha+1}}(Ce)^{2\alpha+1}   + s^{2(\beta - \alpha)}  \geq h(s).$$
From \eqref{20230207_10:24} note that $(\mathbb{EP}1)(\alpha, \beta\,; 1\,; 1) \geq h(s)^{-1} \geq h_1(s)^{-1}$, and we can minimize $h_1(s)$ instead. Letting $A :=  \frac{C}{(2\alpha+1)^{2\alpha+1}}(Ce)^{2\alpha+1}$, we have $h_1(s) = A s^{2\beta +2} +  s^{2(\beta - \alpha)}$\,, and the minimum occurs at 
$$s_0 = \left(\frac{\alpha - \beta}{A (\beta +1)}\right)^{1/(2\alpha +2)}.$$
We then get
\begin{align*}
h_1(s_0)^{-1} =  \frac{(\beta +1)^{\frac{\beta +1}{\alpha+1}}  (\alpha - \beta)^{\frac{\alpha - \beta}{\alpha+1}}}{A^{\frac{\alpha - \beta}{\alpha +1}} (\alpha +1)}\,,
\end{align*}
and 
\begin{align}\label{20230207_16:01}
\log \big((\mathbb{EP}1)& (\alpha, \beta\,; 1\,; 1)\big)  \geq \log h_1(s_0)^{-1} \nonumber \\
& = \frac{\beta +1}{\alpha+1} \log(\beta +1)  + \frac{\alpha - \beta}{\alpha+1} \log(\alpha - \beta) - \frac{\alpha - \beta}{\alpha+1}\log A - \log(\alpha +1)\nonumber\\
& = \log \left(\frac{\beta +1}{\alpha+1}\right) - \frac{\alpha - \beta}{\alpha+1} \log (\beta +1) + \frac{\alpha - \beta}{\alpha+1} \log(\alpha - \beta) -  2(\alpha - \beta)\log C \\
&  \ \ \ \ \ \ - \frac{(\alpha - \beta)(2\alpha +1)}{\alpha+1} + \frac{(\alpha - \beta)(2\alpha +1)}{\alpha+1} \log(2\alpha +1). \nonumber
\end{align}
Let us denote momentarily by $L_0(\alpha, \beta)$ the function appearing on the right-hand side of \eqref{20230207_16:01}. We now verify the following proposition.
\begin{proposition}
For $\alpha > \beta \geq -\frac12$ we have
\begin{equation*}
L_0(\alpha, \beta) = 2(\alpha - \beta)\log(\alpha +2)  + O\left((\alpha - \beta) \, \log \left( \frac {2(\alpha - \beta + 1)}{(\alpha - \beta)}\right)\right),
\end{equation*}
where the implied constant is universal.
\end{proposition}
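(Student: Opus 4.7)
The plan is to pass to the variables $x=\alpha-\beta>0$ and $y=\alpha+1$, so that the hypothesis $\beta\geq -\tfrac12$ becomes $y-x=\beta+1\geq \tfrac12$ (in particular $y\geq \tfrac12$). Rewriting $L_0$ in these variables gives
\[
L_0=\log\tfrac{y-x}{y}-\tfrac{x}{y}\log(y-x)+\tfrac{x}{y}\log x-2x\log C-\tfrac{x(2y-1)}{y}+\tfrac{x(2y-1)}{y}\log(2y-1),
\]
and the assertion reduces to $L_0-2x\log(y+1)=O\!\bigl(x\log\tfrac{2(x+1)}{x}\bigr)$. The strategy is to isolate, within this expression, a single explicit cancellation that produces the main term $2x\log(y+1)$, and then to bound every remaining summand by $O(x)$, which is absorbed by the target error since $x\log\tfrac{2(x+1)}{x}\geq x\log 2$.

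The main cancellation sits inside the last summand of $L_0$. Writing $\log(2y-1)=\log(y+1)+\log\tfrac{2y-1}{y+1}$ and expanding $\tfrac{x(2y-1)}{y}=2x-\tfrac{x}{y}$ yields the exact identity
\[
\tfrac{x(2y-1)}{y}\log(2y-1)-2x\log(y+1)=-\tfrac{x}{y}\log(y+1)+\tfrac{x(2y-1)}{y}\log\tfrac{2y-1}{y+1}.
\]
Both terms on the right are $O(x)$ uniformly on $y\geq \tfrac12$: the first because $\log(y+1)/y$ is bounded on $[\tfrac12,\infty)$; the second because $\tfrac{2y-1}{y}\in[0,2)$ while $(2y-1)\log\tfrac{2y-1}{y+1}$ has a finite limit at $y=\tfrac12$ (via $t\log t\to 0$) and tends to $2\log 2$ as $y\to\infty$. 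This is the subtle step and the main obstacle to flag: it is the only place where the apparent singularity at $\alpha=-\tfrac12$ (where $\log(2y-1)=-\infty$) has to be addressed, and the decomposition above is engineered precisely so that $\log(2y-1)$ only appears premultiplied by the factor $(2y-1)$ that tames it.

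The remaining five summands are handled individually and routinely. The terms $-2x\log C$ and $-\tfrac{x(2y-1)}{y}$ are visibly $O(x)$. The bound $y\geq x+\tfrac12$ gives $\bigl|\log\tfrac{y-x}{y}\bigr|\leq\log(2x+1)$, which is $O(x)$ after splitting the ranges $x\leq 1$ and $x\geq 1$; similarly $\bigl|\tfrac{x}{y}\log(y-x)\bigr|=O(x)$ because $|\log(y-x)|/y$ is uniformly bounded in the region $y\geq \tfrac12,\ y-x\geq \tfrac12$. Finally, the summand $\tfrac{x}{y}\log x$ is what forces the logarithmic factor appearing in the claim: for $x\leq 1$ it is bounded by $2x|\log x|=O\bigl(x\log\tfrac{2(x+1)}{x}\bigr)$, while for $x\geq 1$ one has $\log x\leq x\leq y$, yielding the $O(x)$ bound. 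Summing all contributions gives the proposition.
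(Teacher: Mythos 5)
Your proof is correct and follows essentially the same route as the paper: the same change of variables $x=\alpha-\beta$, $y=\alpha+1$, the same identification of the cancellation between $\tfrac{x(2y-1)}{y}\log(2y-1)$ and $2x\log(y+1)$ (the paper packages this as a bounded function $L_3(y)=\tfrac{2y-1}{y}\log(2y-1)-2\log(y+1)$ and splits the cases $y\le 1$ and $y\ge 1$, which amounts to your identity), and the same treatment of $\tfrac{x}{y}\log x$ as the sole source of the logarithmic factor in the error. One cosmetic slip: it is $\tfrac{2y-1}{y}\log\tfrac{2y-1}{y+1}$, not $(2y-1)\log\tfrac{2y-1}{y+1}$, that tends to $2\log 2$ as $y\to\infty$ (the latter diverges), but the quantity you actually need bounded is the former, so the argument stands.
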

\begin{proof}
If $\alpha \geq \beta \geq -\frac12$, we have $\log \left(\frac{\beta +1}{\alpha +1}\right) = O(\alpha - \beta)$. Hence 
\begin{align*}
L_0(\alpha, \beta) = 2(\alpha - \beta)\log(\alpha +2) + L_1(\alpha, \beta) + O(\alpha - \beta)\,,
\end{align*}
where $L_1(\alpha, \beta)$ is given by 
\begin{align*}
L_1(\alpha, \beta) :=  \frac{\alpha - \beta}{\alpha+1} \log(\alpha - \beta) + \frac{(\alpha - \beta)(2\alpha +1)}{\alpha+1} \log(2\alpha +1) - 2(\alpha - \beta)\log(\alpha +2).
\end{align*}
Letting $x = \alpha - \beta$ and $y = \alpha + 1$, we note that $y > x > 0$ and rewrite $L_1(\alpha, \beta) = L_2(x,y)$ with 
\begin{align*}
L_2(x,y) := \frac{x}{y}\log x  + \frac{x(2y -1)}{y}\log (2y -1) - 2x \log(y +1).
\end{align*}
We need to show that 
\begin{align*}
L_2(x,y) = O\left(x\log\left(\frac{2(x+1)}{x}\right)\right) .
\end{align*}

Since $y \geq \tfrac12$ we have $\frac{x}{y}\log x = O\big(x \log \big(2(x+1)/x\big)\big)$. The remaining part has the form $x \, L_3(y)$, with
$$L_3(y) := \frac{(2y -1)}{y}\log (2y -1) - 2 \log(y +1)\,,$$
and we claim that this quantity is uniformly bounded in $y$. In fact, for $\tfrac12 \leq y \leq 1$, this is the case (recall that $a\log a\to 0$ as $a \to 0^+$), and for $y \geq 1$ we rewrite it as
$$L_3(y) = 2 \log \left( 2 - \frac{3}{y+1}\right) - \frac{\log (2y - 1)}{y} = O(1).$$

\end{proof}

\subsection{Lower bounds II: the case $-1 < \beta < \alpha \leq 0$} \label{Sec_LBII} The exact choice of $0$ as the right endpoint for this case is not particularly important, as long as we are at a positive distance from $-1/2$; this will be suitable for \S \ref{Sec_LBIII}. In any case, to treat this singular range where both $\alpha$ and $\beta$ can be close to the lower endpoint $-1$, a different strategy is required. We use the reproducing kernel of the space.

\subsubsection{An alternative strategy} Let $f \in \mc{H}_{\alpha}(1\,; 1)$, and recall that $\mc{H}_{\alpha}(1\,; 1) = \mc{H}(E_{\alpha})$ as sets, with norms differing by a multiplicative constant (Lemma \ref{Sec5_rel_facts}). From the reproducing kernel identity
\begin{align}\label{20230310_14:19}
f(x) = \langle f, K_{\alpha}(x, \cdot) \rangle_{\mc{H}(E_{\alpha})} 
\end{align}
we get, after an application of the Cauchy-Schwarz inequality,
\begin{align}\label{20230208_16:46}
\begin{split}
|f(x)|^2 & \leq \|f\|_{\mc{H}(E_{\alpha})}^2 \langle K_{\alpha}(x, \cdot), K_{\alpha}(x, \cdot) \rangle_{\mc{H}(E_{\alpha})} = \|f\|_{\mc{H}(E_{\alpha})}^2\, K_{\alpha}(x, x)\\
& = c_\alpha \left(\int_{-\infty}^\infty |f(y)|^2 \,|y|^{2\alpha+1} \,\d y\right) K_{\alpha}(x, x).
\end{split}
\end{align}
From the pointwise bound \eqref{20230208_16:46} we obtain
\begin{align}\label{20230208_16:58}
\begin{split}
& \int_{\R} |f(x)|^2 \,|x|^{2\beta +1} \,\d x  = \int_{[-s, s]} |f(x)|^2 \,|x|^{2\beta +1} \,\d x + \int_{[-s, s]^c} |f(x)|^2\, |x|^{2\beta +1} \,\d x\\
& \leq c_\alpha \left(\int_{-\infty}^\infty |f(x)|^2 \,|x|^{2\alpha+1} \,\d x\right)  \int_{[-s, s]} K_{\alpha}(x, x) \, |x|^{2\beta +1} \,\d x + s^{2(\beta - \alpha)} \int_{[-s, s]^c} |f(x)|^2 \,|x|^{2\alpha +1} \,\d x\\
& \leq \left( c_\alpha \, \|K_{\alpha}(x, x)\|_{L^{\infty}[-s,s]} \, \frac{2 s^{2\beta +2}}{2\beta +2} + s^{2(\beta - \alpha)} \right) \left(\int_{-\infty}^\infty |f(x)|^2 \,|x|^{2\alpha+1} \,\d x\right)\,,
\end{split}
\end{align}
valid for any $s >0$. 

\smallskip

As long as we are inside the range
\begin{equation}\label{20230208_17:01}
s \leq 2 \sqrt{\alpha +1}\,,
\end{equation}
we can use Lemma \ref{Lem14_UBRK} to get 
\begin{align}\label{20230208_16:59}
\|K_{\alpha}(x, x)\|_{L^{\infty}[-s,s]} \leq \frac{1}{2 \pi (\alpha +1)} + \frac{s^2}{4\pi (\alpha +1)^2} \leq \frac{3}{2\pi(\alpha +1)}.
\end{align}
In this situation, from \eqref{20230208_16:58} and \eqref{20230208_16:59} we get
\begin{equation*}
(\mathbb{EP}1)(\alpha, \beta\,; 1\,; 1) \geq \left(\frac{\tfrac{3}{2\pi}\, c_\alpha}{(\alpha+1)(\beta+1)} \, s^{2\beta +2} + s^{2(\beta - \alpha)}\right)^{-1}
\end{equation*}
and we now seek to optimize the right-hand side in the admissible range \eqref{20230208_17:01}.

\subsubsection{Optimization and asymptotic analysis}  The global minimum of the function
$$h(s) := \frac{\tfrac{3}{2\pi}\, c_\alpha}{(\alpha+1)(\beta+1)} \, s^{2\beta +2} + s^{2(\beta - \alpha)}$$
for $s >0$ occurs at 
$$s_0 =  \left(\frac{(\alpha - \beta)(\alpha+1)}{\tfrac{3}{2\pi}\, c_{\alpha}}\right)^{\frac{1}{2\alpha +2}}.$$
First observe that such $s_0$ belongs to our admissible range \eqref{20230208_17:01}. In fact, recalling that $c_\alpha = \pi \,2^{-2\alpha-1}\, \Gamma(\alpha+1)^{-2}$, this claim is equivalent to the fact that 
\begin{equation*}
(\alpha - \beta) \leq \frac{3 (\alpha+1)^{\alpha}}{\Gamma(\alpha +1)^2} =  \frac{3(\alpha+1)^{\alpha+2}}{\Gamma(\alpha +2)^2}.
\end{equation*}
Since $(\alpha - \beta)  \leq (\alpha +1)$, it suffices to verify that 
$$1 \leq \frac{3 (\alpha+1)^{\alpha+1}}{\Gamma(\alpha +2)^2},$$
which is true in our range $-1 < \alpha \leq 0$ (note, however, that this would fail for large $\alpha$).

\smallskip

Proceeding as in \S \ref{Subsec_Opt_Asy_Ana} we have 
\begin{align*}
h(s_0)^{-1} =  \frac{(\beta +1)^{\frac{\beta +1}{\alpha+1}}  (\alpha - \beta)^{\frac{\alpha - \beta}{\alpha+1}}}{A^{\frac{\alpha - \beta}{\alpha +1}} (\alpha +1)}\,,
\end{align*}
with 
$$A = \frac{\tfrac{3}{2\pi}\, c_\alpha}{(\alpha+1)(\beta+1)} = \frac{3}{2^{2\alpha +2}\,\Gamma(\alpha+1)^2\,(\alpha+1)
\,(\beta+1)} = \frac{3 (\alpha+1)}{2^{2\alpha +2}\,\Gamma(\alpha+2)^2
\,(\beta+1)}.$$ 
This yields
\begin{align}\label{20230209_14:56}
\begin{split}
\log (\mathbb{EP}1)& (\alpha, \beta\,; 1\,; 1)  \geq \log h(s_0)^{-1} \\
& = \frac{\beta +1}{\alpha+1} \log(\beta +1)  + \frac{\alpha - \beta}{\alpha+1} \log(\alpha - \beta) - \log(\alpha +1) - \frac{\alpha - \beta}{\alpha+1}\log A\\
& = \log \left(\frac{\beta +1}{\alpha+1}\right) - \frac{\alpha - \beta}{\alpha+1} \log (\beta +1) + \frac{\alpha - \beta}{\alpha+1} \log(\alpha - \beta)  - \frac{\alpha - \beta}{\alpha+1} \log 3\\
& \ \ \ \ \ \ + 2(\alpha - \beta)\log 2 - \frac{\alpha - \beta}{\alpha+1} \log \left(\frac{\alpha +1}{\beta+1}\right)+ \frac{2(\alpha - \beta)}{\alpha+1} \log \Gamma(\alpha +2)\\
& = \log \left(\frac{\beta +1}{\alpha+1}\right) - \frac{\alpha - \beta}{\alpha+1}\log \left(\frac{3(\alpha +1)}{\alpha - \beta}\right) + 2(\alpha - \beta)\log 2  + \frac{2(\alpha - \beta)}{\alpha+1} \log \Gamma(\alpha +2).
\end{split} 
\end{align}
Let us denote momentarily by $L_4(\alpha, \beta)$ the function appearing on the right-hand side of \eqref{20230209_14:56}. We now verify the following proposition.
\begin{proposition}
For $-1 < \beta < \alpha \leq 0$ we have
\begin{equation*}
L_4(\alpha, \beta) = \log \left(\frac{\beta +1}{\alpha+1}\right) + O\left(\left(\frac{\alpha - \beta}{\alpha +1} \right) \log \left(\frac{2(\alpha +1)}{\alpha - \beta}\right)\right),
\end{equation*}
where the implied constant is universal.
\end{proposition}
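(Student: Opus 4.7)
The strategy is straightforward: subtract $\log\!\left(\tfrac{\beta+1}{\alpha+1}\right)$ from $L_4(\alpha,\beta)$ and pull out the factor $\tfrac{\alpha-\beta}{\alpha+1}$ from what remains. Concretely, I would write
\begin{align*}
L_4(\alpha,\beta)-\log\!\left(\tfrac{\beta+1}{\alpha+1}\right) = \frac{\alpha-\beta}{\alpha+1}\,\Psi(\alpha,\beta),
\end{align*}
where
\begin{align*}
\Psi(\alpha,\beta) := -\log\!\left(\frac{3(\alpha+1)}{\alpha-\beta}\right) + 2(\alpha+1)\log 2 + 2\log\Gamma(\alpha+2).
\end{align*}
The task then reduces to showing $\Psi(\alpha,\beta)=O\!\left(\log\!\left(\tfrac{2(\alpha+1)}{\alpha-\beta}\right)\right)$ in the range $-1<\beta<\alpha\le 0$.

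In that range, $0<\alpha+1\le 1$ and $1\le\alpha+2\le 2$. Therefore $2(\alpha+1)\log 2=O(1)$, and since $\Gamma$ is continuous and positive on $[1,2]$, also $\log\Gamma(\alpha+2)=O(1)$. This reduces the claim to
\begin{align*}
-\log\!\left(\frac{3(\alpha+1)}{\alpha-\beta}\right) + O(1) = O\!\left(\log\!\left(\frac{2(\alpha+1)}{\alpha-\beta}\right)\right).
\end{align*}
Since $\alpha-\beta\le\alpha+1$, both $\log\!\left(\tfrac{3(\alpha+1)}{\alpha-\beta}\right)$ and $\log\!\left(\tfrac{2(\alpha+1)}{\alpha-\beta}\right)$ are positive and differ by the constant $\log(3/2)$. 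Thus the left side above is bounded in absolute value by $\log\!\left(\tfrac{2(\alpha+1)}{\alpha-\beta}\right)+O(1)$, and because $\log\!\left(\tfrac{2(\alpha+1)}{\alpha-\beta}\right)\ge\log 2>0$, any $O(1)$ quantity is absorbed into $O\!\left(\log\!\left(\tfrac{2(\alpha+1)}{\alpha-\beta}\right)\right)$.

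Multiplying back by $\tfrac{\alpha-\beta}{\alpha+1}$ yields the advertised remainder, and the implied constants are easily seen to be universal since they come only from the uniform bounds on $\Gamma$ on the compact interval $[1,2]$ and on the bounded factor $\alpha+1\in(0,1]$. There is no real obstacle here; the only step that warrants any care is verifying that the auxiliary quantity $\log\!\left(\tfrac{2(\alpha+1)}{\alpha-\beta}\right)$ is bounded away from zero (which follows because $\alpha-\beta\le\alpha+1$ forces the argument of this logarithm to be $\ge 2$), so that constants can indeed be absorbed into the big-$O$.
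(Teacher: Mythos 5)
Your proof is correct and follows essentially the same route as the paper's (very terse) argument: both rest on the observations that $\log\Gamma(\alpha+2)=O(1)$ on the range in question and that, because $\alpha-\beta\le\alpha+1$ forces $\log\!\left(\tfrac{2(\alpha+1)}{\alpha-\beta}\right)\ge\log 2$, all bounded contributions multiplied by $\tfrac{\alpha-\beta}{\alpha+1}$ are absorbed into the stated remainder. Your write-up merely makes explicit the factorization that the paper leaves implicit.
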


\begin{proof} The last line of \eqref{20230209_14:56} is written in such a way that we are essentially done. Just observe that in our range we have $\log \Gamma(\alpha +2) = O(1)$ and 
$$\frac{\alpha - \beta}{\alpha+1} = O\left(\left(\frac{\alpha - \beta}{\alpha +1} \right) \log \left(\frac{2(\alpha +1)}{\alpha - \beta}\right)\right).
$$
\end{proof}

\subsection{Lower bounds III: the case $-1 < \beta < -\frac12$ and  $0 < \alpha$} \label{Sec_LBIII} In this remaining case, note that $\alpha - \beta \geq \frac12$. We argue by introducing a test point, say $-\frac14$ and using the previous two cases. In fact, from the definition of our problem we have the inequality  
\begin{equation}\label{20230209_22:04}
(\mathbb{EP}1)(\alpha, \beta\,; 1\,; 1) \geq (\mathbb{EP}1)(\alpha, -\tfrac14\,; 1\,; 1)\ (\mathbb{EP}1)(-\tfrac14, \beta\,; 1\,; 1).
\end{equation}
Hence, from \eqref{20230209_22:04} and our work in \S \ref{LB1_case1} and \S \ref{Sec_LBII}, we obtain
\begin{align*}
\log \big((\mathbb{EP}1)&(\alpha, \beta\,; 1\,; 1) \big)\geq \log \big((\mathbb{EP}1)(\alpha, -\tfrac14\,; 1\,; 1)\big) + \log \big((\mathbb{EP}1)(-\tfrac14, \beta\,; 1\,; 1)\big)\\
& \geq \Big(2(\alpha +\tfrac14)\log(\alpha +2)  + O(\alpha +\tfrac14)\Big) + \left(\log \left(\frac{\beta +1}{\frac34}\right) + O(1)\right)\\
& = 2(\alpha -\beta)\log(\alpha +2) + \log \left(\frac{\beta +1}{\alpha+1}\right) + O(\alpha - \beta),
\end{align*}
as desired.

\subsection{Conclusion} The combination of our work in \S \ref{Upper bounds - example}, \S \ref{LB1_case1},  \S \ref{Sec_LBII} and \S \ref{Sec_LBIII} leads to the unified formulation proposed in Theorem \ref{Thm5 - Asympt}.

\section{The sharp constant saga: multiplication by $z^k$ in de Branges spaces}\label{Sec8_mult_z}
\subsection{An extremal problem for general de Branges spaces} Let $E(z) = A(z) - iB(z)$ be a Hermite-Biehler function, with $A$ and $B$ real entire functions, and $\H(E)$ be the associated de Branges space as discussed in \S\ref{GEn_Ov_dB}. Throughout this section we assume that our function $E$ verifies the following conditions:
\begin{itemize}
\item[(C1)] $E$ has no real zeros.
\item[(C2)] The function $z \mapsto E(iz)$ is real entire (or, equivalently, $A$ is even and $B$ is odd).
\item[(C3)] $A \notin \H(E).$
\end{itemize}
We consider the following extremal problem.

\smallskip

\noindent {\it Extremal Problem 4} (EP4): Let $E = A - iB$ be a Hermite-Biehler function verifying the conditions (C1) - (C3) above, and let $k \in \N$. Assume, in addition, that $A$ has at least $k+1$ zeros. Find
\begin{align*}
(\mathbb{EP}4)(E; k) := \inf_{0 \neq f \in \mathcal{H}(E)} \frac{\|z^k \, f\|^2_{\mc{H}(E)}}{\|f\|^2_{\mc{H}(E)}}.
\end{align*}

\smallskip

Recall that the Hermite-Biehler condition \eqref{20230315_15:01} implies that all the zeros of $A$ and $B$ are real. Since $E$ has no real zeros, the sets of zeros of $A$ and $B$ are disjoint; in particular, $A(0) \neq 0$. Moreover, the zeros of $A$ and $B$ are all simple. In fact, if $A$ had a double zero $\xi$, then by \eqref{Intro_Def_K} one would have $K(\xi, \xi) = \|K(\xi, \cdot)\|^2_{\mc{H}(E)} = 0$, and then $K(\xi, \cdot)$ would be identically zero and \eqref{20210913_13:57} would imply that $B(\xi) = 0$, a contradiction. Condition (C3), that $A \notin \H(E)$, is generic and allows for the use of the interpolation formulas in \eqref{20210809_11:01}. The minor technical assumption that $A$ has at least $k+1$ zeros guarantees that the subspace 
\begin{align*}
\mc{X}_k(E)  := \{ f \in \mc{H}(E);\,  z^k f \in \mc{H}(E)\}
\end{align*}
is non-empty. In fact, if $\xi_1, \dots, \xi_{k+1}$ are zeros of $A$, the functions $K(\xi, z) = A(z) / \pi (z - \xi_j)$ all belong to $\mc{H}(E)$ and a suitable linear combination of them will decay like $|A(x)/x^{k+1}|$ as $|x| \to \infty$, and hence will be in $\mc{X}_k(E)$. 

\smallskip

We first establish a few qualitative properties about the extremal problem (EP4), in the spirit of similar results that have already been discussed in this paper (hence, we shall be brief in the proof). 
\begin{proposition} \label{Prop24_dB}The following statements hold:
\begin{itemize}
\item[(i)] $(\mathbb{EP}4)(E; k) >0$.
\item[(ii)] There exist extremizers for $(\mathbb{EP}4)(E; k)$.
\item[(iii)] Any extremizer for $(\mathbb{EP}4)(E; k)$ must be an even function. 
\end{itemize}
\end{proposition}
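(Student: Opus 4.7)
The plan is to follow the template developed in Section~\ref{Sec_Existence_of _extremizers} for (EP1), with the interpolation formula \eqref{20210809_11:01}---available since $A \notin \mc{H}(E)$ by (C3)---playing the role that weighted $L^2$ integrals played in the analysis of (EP1). Write $\lambda := (\mathbb{EP}4)(E;k)$ and let $\{\xi_n\}$ be the (real, simple) zeros of $A$.

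For (i), I would apply the interpolation identity $\|h\|^2_{\mc{H}(E)} = \sum_n |h(\xi_n)|^2/K(\xi_n,\xi_n)$ to both $h=f$ and $h=z^k f$ (legitimate since $f\in\mc{X}_k(E)$ means $z^k f \in \mc{H}(E)$), obtaining
\begin{align*}
\frac{\|z^k f\|^2_{\mc{H}(E)}}{\|f\|^2_{\mc{H}(E)}} = \frac{\sum_n |\xi_n|^{2k}|f(\xi_n)|^2/K(\xi_n,\xi_n)}{\sum_n |f(\xi_n)|^2/K(\xi_n,\xi_n)} \geq \min_n |\xi_n|^{2k} > 0\,,
\end{align*}
the last bound holding because $A(0)\neq 0$ by (C1) and the zeros of $A$ are isolated in $\R$.

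For (ii), I would run the standard extraction. For a normalized extremizing sequence $\{f_n\}\subset \mc{X}_k(E)$ with $\|f_n\|=1$ and $\|z^k f_n\|^2 \to \lambda$, extract weak limits $f_n \rightharpoonup g$ and $z^k f_n \rightharpoonup h$ in $\mc{H}(E)$; pointwise evaluation via the reproducing kernel then gives $h = z^k g$, so $g \in \mc{X}_k(E)$ and $\|z^k g\|^2 \leq \lambda$ by weak lower semicontinuity. To rule out mass escape I would use the tail bound $\sum_{|\xi_m|>R}|f_n(\xi_m)|^2/K(\xi_m,\xi_m) \leq R^{-2k}\|z^k f_n\|^2 \leq R^{-2k}(\lambda+1)$ (for large $n$), combined with pointwise convergence on the finite set $\{|\xi_m|\leq R\}$, to obtain $\|g\|^2 \geq 1 - O(R^{-2k})$, whence $\|g\|=1$ after $R\to\infty$; thus $g$ is an extremizer.

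For (iii), the plan mirrors Proposition \ref{Prop_even_extremizers_dim1}. Condition (C2) gives $E(-z) = E^*(z)$, so $|E(-x)|=|E(x)|$ on $\R$; a short check on the Nevanlinna factorization (using that $f(-\cdot)/E$ equals $(f^*/E)^*(-\cdot)$) shows that $\mc{H}(E)$ is closed isometrically under the reflection $f \mapsto f(-\cdot)$. Splitting an extremizer as $g = g_e + g_o$, orthogonality in both $\|\cdot\|$ and $\|z^k\cdot\|$ (since $g_e\overline{g_o}$ is odd and $|E|^{-2}$ is even) gives
\begin{align*}
\lambda = \frac{\|z^k g\|^2}{\|g\|^2} = \frac{\|z^k g_e\|^2 + \|z^k g_o\|^2}{\|g_e\|^2+\|g_o\|^2}\,,
\end{align*}
a weighted mean of two ratios each $\geq \lambda$; if $g_o \not\equiv 0$, then $g_o$ itself is an odd extremizer. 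Writing $g_o(z) = z\,h(z)$ with $h \in \mc{X}_k(E)$ and $z^{k+1}h \in \mc{H}(E)$, and setting $I_j := \int_{\R} x^{2j}|h(x)|^2|E(x)|^{-2}\,\d x$, extremality of $g_o$ reads $I_{k+1} = \lambda I_1$ while the definition of $\lambda$ applied to $h$ yields $I_k \geq \lambda I_0$. Two applications of H\"{o}lder's inequality produce $I_1 \leq I_0^{k/(k+1)} I_{k+1}^{1/(k+1)}$ and $I_k \leq I_0^{1/(k+1)} I_{k+1}^{k/(k+1)}$, which multiply to $I_1 I_k \leq I_0 I_{k+1} = \lambda I_0 I_1$, forcing $I_k = \lambda I_0$ and equality in both H\"{o}lder bounds; such equality forces $x^{2(k+1)}$ to be essentially constant on the support of $|h(x)|^2|E(x)|^{-2}\,\d x$, which then lies in at most two points. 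Since $|E|^{-2}>0$ on $\R$ by (C1), this would force $h \equiv 0$ as an entire function, contradicting $g_o \not\equiv 0$. The step I expect to require the most bookkeeping is the reflection-invariance claim just above: the $L^2$ part is immediate from $|E(-x)|=|E(x)|$, but transferring the bounded-type and mean-type conditions under $f \mapsto f(-\cdot)$ needs a short argument manipulating $E(-z) = E^*(z)$ and the $*$-involution to guarantee that the even/odd splitting genuinely lives inside both $\mc{H}(E)$ and $\mc{X}_k(E)$.
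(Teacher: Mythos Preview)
Your proposal is correct. For parts (ii) and (iii) you are doing essentially what the paper does---the paper in fact just points back to \S\ref{Exist_extr_dim_1} and Proposition~\ref{Prop_even_extremizers_dim1} and omits the details---though you phrase the tail control in (ii) through the interpolation formula \eqref{20210809_11:01} rather than through integrals over $[-R,R]$ and $[-R,R]^c$, which is a cosmetic difference. You are also right to flag the reflection invariance $f\mapsto f(-\cdot)$ of $\mc{H}(E)$ under (C2), and the membership $h=g_o/z\in\mc{X}_k(E)$, as the places in (iii) where a short bounded-type/mean-type check is needed; the paper sweeps this under the remark that $|E(x)|^{-2}$ is even.

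The one genuine departure is in (i). The paper argues via the pointwise bound $|f(x)|^2\le \|f\|_{\mc{H}(E)}^2 K(x,x)$ together with continuity of $K(x,x)$ to produce a non-explicit positive lower bound. Your route through the interpolation formula is both shorter and sharper: it yields $(\mathbb{EP}4)(E;k)\ge \xi_1^{2k}$ directly, and this bound is in fact attained when $k=1$ by Theorem~\ref{Teoremacco_versao_dB}(i). So your argument for (i) delivers quantitative content that the paper's does not.
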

\begin{proof}
(i) Let $f \in \mc{X}_k(E)$ be such that $\|f\|^2_{\mc{H}(E)} = 1$. Recall, from the reproducing kernel identity and an application of the Cauchy-Schwarz inequality (like in \eqref{20230310_14:19} - \eqref{20230208_16:46}), that 
\begin{align}\label{20230310_15:00}
|f(z)|^2 & \leq \|f\|_{\mc{H}(E)}^2\, K(z, z)
\end{align}
for all $z \in \C$. Since $x \mapsto K(x, x)$ is continuous, we see that $f$ cannot concentrate at the origin. Letting $M = \max\{K(x,x); -1 \leq x \leq1\}$ and $\d \mu(x) = |E(x)|^{-2}\d x$, choose $\eta \leq 1$ such that $M \int_{[-\eta, \eta]}  \d\mu(x) \leq \frac{1}{2}$. Then $\int_{[-\eta, \eta]}  |f(x)|^2\, \d\mu(x) \leq \frac{1}{2}$ and 
\begin{align*}
\|z^k \, f\|^2_{\mc{H}(E)} \geq \int_{[-\eta,\eta]^c} |f(x)|^2 |x|^{2k}\,\d\mu(x) \geq \eta^{2k}\int_{[-\eta,\eta]^c} |f(x)|^2\,\d\mu(x) \geq \frac{\eta^{2k}}{2} >0.
\end{align*}

\smallskip

\noindent (ii) The proof is similar to \S \ref{Exist_extr_dim_1}. Let $\{f_n\}_{n\geq 1} \subset  \mc{X}_k(E)$ be an extremizing sequence, normalized so that $\|z^k f_n\|_{ \mc{H}(E)} = 1$. This means that $\|f_n\|^2_{ \mc{H}(E)} \to (\mathbb{EP}4)(E;k)^{-1}.$
By reflexivity, up to a subsequence, we may assume that $f_n \rightharpoonup g$ for some $g \in \mc{H}(E)$. The reproducing kernel identity implies that $f_n \to g$ pointwise everywhere, and by Fatou's lemma we have $\|z^k g\|_{ \mc{H}(E)} \leq 1$. In particular $g \in \mc{X}_k(E)$. Using \eqref{20230310_15:00}, note also that $f_n$ is uniformly bounded in compact subsets of $\C$. The proof that $\|g\|^2_{ \mc{H}(E)} \geq (\mathbb{EP}4)(E;k)^{-1}$ (and hence $g$ is the desired extremizer) follows the argument of \eqref{20221117_10:33}, \eqref{20221117_10:34} and \eqref{20221117_10:46}.

\smallskip

\noindent (iii) The proof follows the same reasoning of the proof of Proposition \ref{Prop_even_extremizers_dim1} (recall that $x \mapsto |E(x)|^{-2}$ is an even function in our case) and we omit the details.
\end{proof}

Our main result in this section is a complete solution for the extremal problem (EP4). Let 
$$0 < \xi_1 < \xi_2 < \xi_3 < \ldots$$
denote the sequence of positive zeros of $A$ and define the meromorphic function
$$C(z) := \frac{B(z)}{A(z)}.$$

\begin{theorem}[Sharp constants - general version]\label{Teoremacco_versao_dB}
 Let $k \in \N$ and set $ \lambda_0:=\big((\mathbb{EP}4)(E; k)\big)^{1/2k}$. 
 \begin{itemize}
 \item[(i)] If $k=1$ we have $\lambda_0 = \xi_1$. 
 \smallskip
 \item[(ii)] If $k \geq 2$, set $\ell := \lfloor k/2 \rfloor$. Then $\lambda_0$ is the smallest positive solution of the equation 
 \begin{align*}
 A(\lambda) \det \mc{V}(\lambda) = 0\,,
 \end{align*}
 where $\mc{V}(\lambda)$ is the $\ell \times \ell$ matrix with entries
\begin{align*}
\ \ \ \ \ \ \  \ \ \ \ \ \ \ \  \ \ \ \big(\mc{V}(\lambda)\big)_{mj} = \sum_{r=0}^{k-1} \omega^{r (4\ell - 2m - 2j +3)} \,C\big(\omega^{r} \lambda\big)  \ \ \  \ \  \ \ \ (1\leq m,j \leq \ell)\,;
\end{align*}
and $\omega := e^{\pi i/k}$.
 \end{itemize}
\end{theorem}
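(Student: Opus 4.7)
The proof is by variational analysis combined with a residue computation. By Proposition \ref{Prop24_dB}, there exists an even extremizer $f \in \mc{X}_k(E)$; let $\lambda_0^{2k}$ denote the resulting minimum of the Rayleigh quotient. Since $A \notin \mc{H}(E)$, the even subspace of $\mc{H}(E)$ is spanned by the orthogonal family $\{A(z)/(z^2 - \xi_n^2)\}_{n \geq 1}$, where $\xi_n$ is the $n$-th positive zero of $A$. Writing $f(z) = \sum_n a_n \xi_n A(z)/(z^2 - \xi_n^2)$, we have $f(\xi_n) = a_n A'(\xi_n)/2$, and the $\mc{H}(E)$-inner product takes the weighted $\ell^2$ form via \eqref{20210809_11:01}. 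A first technical step is to characterize admissibility $z^k h \in \mc{H}(E)$ for even $h$. Performing long division of $z^k$ by $z^2 - \xi_n^2$, the part of $z^k h$ of the form $A(z) \cdot (\text{polynomial})$ must vanish (since $A \notin \mc{H}(E)$), which translates into the $\ell = \lfloor k/2 \rfloor$ moment conditions $\sum_n a_n \xi_n^{2s+1} = 0$ for $s = 0, 1, \ldots, \ell - 1$.

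Next comes the Lagrange multiplier argument. The first-order extremality of $f$ yields
$\sum_n (\xi_n^{2k} - \lambda_0^{2k}) f(\xi_n) \overline{h(\xi_n)}/K(\xi_n, \xi_n) = 0$
for every even $h \in \mc{X}_k(E)$. Since this must hold for every sequence $(h(\xi_n))_n$ satisfying the $\ell$ moment conditions, the vector $\big((\xi_n^{2k} - \lambda_0^{2k}) f(\xi_n)/K(\xi_n, \xi_n)\big)_n$ must lie in the linear span of the constraint vectors $(\xi_n^{2s+1}/A'(\xi_n))_n$ for $s = 0, \ldots, \ell - 1$. Using the identity $K(\xi_n, \xi_n) = -A'(\xi_n) B(\xi_n)/\pi$, this produces
\begin{equation*}
f(\xi_n) = -\frac{1}{\pi}\sum_{s=0}^{\ell-1} \mu_s \,\frac{\xi_n^{2s+1} B(\xi_n)}{\xi_n^{2k} - \lambda_0^{2k}}
\end{equation*}
for some scalars $\mu_0, \ldots, \mu_{\ell-1}$. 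Substituting this formula back into the moment conditions that $f$ itself must satisfy (because $f \in \mc{X}_k(E)$) produces a homogeneous $\ell \times \ell$ linear system in $(\mu_s)$, with coefficient matrix $M_{mj}(\lambda_0) = \sum_n B(\xi_n) \xi_n^{2m+2j-2}/[A'(\xi_n)(\xi_n^{2k} - \lambda_0^{2k})]$. Nontrivial solutions exist precisely when $\det M(\lambda_0) = 0$.

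To identify this infinite sum $M_{mj}$ with the finite sum defining $\mc{V}$, I integrate the meromorphic function $z \mapsto C(z)\, z^{p}/(z^{2k} - \lambda^{2k})$ around a sequence of large circles avoiding the $\xi_n$, where $C(z) = B(z)/A(z)$ and $p = 2m + 2j - 2$. The growth properties of $E$ (and hence of $C$) supply the bounds needed to send the contour integrals to zero. The residue theorem then equates the residues at $\pm \xi_n$ (reconstructing $M_{mj}$) with minus the residues at the $2k$-th roots $\omega^r \lambda$; because $C$ is odd (from $A$ even and $B$ odd), the latter sum collapses to $r = 0, \ldots, k-1$, and after a reindexing becomes (proportional to) $\mc{V}_{mj}(\lambda)$. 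The factor $A(\lambda)$ in $A(\lambda)\det\mc{V}(\lambda) = 0$ covers the degenerate case $\lambda_0 = \xi_{n_0}$: the variational formula for $f(\xi_n)$ is singular at $n = n_0$ but genuine extremizers persist with extra freedom at $\xi_{n_0}$, and symmetrically the simple pole of $\det\mc{V}$ at each $\xi_n$ with $n \leq \ell$ is cancelled by the simple zero of $A$, rendering $A\det\mc{V}$ continuous on $(0, \xi_{\ell+1})$. The bound $\lambda_0 < \xi_{\ell+1}$ follows from a counting argument: among test functions supported in the span of the first $\ell+1$ basis elements, the $\ell$ moment conditions (a Vandermonde-type system of maximal rank) leave a one-parameter family whose Rayleigh quotient is at most $\xi_{\ell+1}^{2k}$.

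Case (i), $k = 1$ and $\ell = 0$, collapses trivially: there are no moment conditions, so the variational orthogonality forces $f(\xi_n) = 0$ whenever $\xi_n \neq \lambda_0$, leaving a single-kernel extremizer and $\lambda_0 = \xi_1$. The main obstacle is the residue computation underpinning the passage from $\det M$ to $\det\mc{V}$: it requires careful bookkeeping of the $2k$-th roots of unity together with the odd parity of $C$, along with quantitative growth estimates on $C$ on the contours. A secondary delicacy is separating the genuine zeros of $A\det\mc{V}$ from the spurious pole-zero cancellations at $\xi_1, \ldots, \xi_\ell$; this is handled together with the bound $\lambda_0 < \xi_{\ell+1}$ to pin down $\lambda_0$ as the smallest positive zero of $A(\lambda)\det\mc{V}(\lambda)$.
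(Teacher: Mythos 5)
Most of your argument runs parallel to the paper's: the reduction to even functions, the interpolation-formula coordinates $a_n$, the $\ell$ moment conditions (your long-division derivation is a legitimate alternative to the paper's route of differentiating $g(z)=z^kf(z)$ at the origin), the Lagrange-multiplier identity for $f(\xi_n)$, the substitution back into the constraints to get an $\ell\times\ell$ homogeneous system, and the test-function bound $\lambda_0<\xi_{\ell+1}$ all match the paper's scheme in substance. The case $k=1$ is also handled the same way.

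The genuine gap is in the step you yourself flag as the main obstacle: evaluating the infinite sums $M_{mj}(\lambda)=\sum_n B(\xi_n)\xi_n^{2m+2j-2}/[A'(\xi_n)(\xi_n^{2k}-\lambda^{2k})]$ by integrating $C(z)z^p/(z^{2k}-\lambda^{2k})$ over a sequence of large circles. You assert that "the growth properties of $E$ (and hence of $C$) supply the bounds needed to send the contour integrals to zero," but no such bounds follow from the hypotheses (C1)--(C3). A general Hermite--Biehler $E$ gives only that $C=B/A$ maps $\C^+$ to $\C^+$; there is no sequence of radii on which $|C|$ is controlled uniformly (the zeros of $A$ may cluster, and $C$ may be large between consecutive poles), and even if the contour integrals converged one would still have to rule out a residual linear term of the type $az$ that a general Herglotz function carries in its pole expansion. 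This is exactly the point where the generality of the de Branges setting bites. The paper avoids contour integration entirely: the needed partial-fraction expansion
\begin{equation*}
\frac{C(z)}{2z} \;=\; \sum_{n=1}^{\infty}\frac{1}{c_n\,(\xi_n^2-z^2)}
\end{equation*}
is obtained in \eqref{20230313_22:07} by applying the interpolation formula \eqref{20230210_16:00_1} to the reproducing kernel $K(0,\cdot)\in\mc{H}(E)$ (this is where $A\notin\mc{H}(E)$ is used, and it automatically excludes any extra linear term), and the passage from the $2k$-th power denominators to the finite sum over the roots of unity is then the purely algebraic, termwise identity of Lemma \ref{Lem26}, which needs no analytic estimates. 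To close your argument you should replace the residue computation by these two ingredients; as written, the residue step cannot be justified at the stated level of generality.
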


 \noindent {\sc Remark:} For $k \geq 2$, we shall verify in our proof that $0 < \lambda_0 < \xi_{\ell +1}$. Note that $\lambda \mapsto \det \mc{V}(\lambda)$ is, in principle, a meromorphic function of the variable $\lambda$ that is real-valued on $\R$, but we verify in our proof that the function $\lambda \mapsto A(\lambda) \det \mc{V}(\lambda)$ is in fact continuous on the interval $(0,\xi_{\ell +1})$.

\medskip

We are also able to classify the extremizers for $(\mathbb{EP}4)(E; k)$. In what follows, let 
\begin{equation}\label{20230317_18:08}
c_n:=\frac{-A'(\xi_n)}{B(\xi_n)}.
\end{equation}
From \eqref{Intro_Def_K} note that $c_n>0$. Also, for $\ell  = \lfloor k/2 \rfloor$ (when $k \geq 2$), let $\mc{T}$ be the $\ell \times \ell$ Vandermonde matrix with entries
\begin{equation}\label{20230317_18:13}
 \ \ \ \ \ \ \ \ \ \ \ \mc{T}_{mj} =\xi_{m}^{2\ell - 2j +1} \ \ \ \ \ \ (1\leq m,j \leq \ell)\,,
 \end{equation}
with $\mc{T}^{-1}$ denoting its inverse, and let $\mc{Q}(\lambda)$ be the $\ell \times \ell$ matrix with entries 
$$ \ \ \ \ \ \ \ \ \ \ \ \  \ \  \big(\mc{Q}(\lambda)\big)_{mj} = \frac{\big(\mc{V}(\lambda)\big)_{mj} }{(2k)\,\lambda^{2k - 4\ell + 2m +  2j -3}} \ \ \ \ \ \ (1\leq m,j \leq \ell).$$

\begin{theorem}[Classification of extremizers] \label{Thmaco_class}Let $k \in \N$ and $ \lambda_0=\big((\mathbb{EP}4)(E; k)\big)^{1/2k}$. 
\begin{itemize}
\item[(i)] If $k=1$, the extremizers for $(\mathbb{EP}4)(E; k)$ are spanned over $\C$ by the real entire function
\begin{equation*}
f(z) = \frac{A(z)}{(z^2 - \xi_1^2)}.
\end{equation*}
\item[(ii)] If $k \geq 2$, set $\ell := \lfloor k/2 \rfloor$. The extremizers for $(\mathbb{EP}4)(E; k)$ are spanned over $\C$ by the real entire functions
\begin{equation}\label{20230317_17:54}
f(z) = \sum_{n=1}^{\infty} a_n \frac{ \xi_n \,A(z)}{(z^2 - \xi_n^2)}\,,
\end{equation}
where 
\begin{align*}
a_n = \frac{ \sum_{i=1}^{\ell}c_ia_i \left(\sum_{r=1}^{\ell}  {\xi}_{n}^{2\ell - 2r +1} (\mc{T}^{-1})_{ri} \right) \big({\xi}_{i}^{2k} - \lambda_0^{2k}\big)}{ c_n\big( {\xi}_{n}^{2k} - \lambda_0^{2k}\big)} \ \ \ \ ;  \ \ \ \ {\rm for}\ n> \ell\,,
\end{align*}
and $(a_1, a_2, \ldots, a_\ell) \in \R^d\setminus\{{\bf 0}\}$ belongs to ${\rm ker}\,  \mc{W}(\lambda_0)$\footnote{Here we mean right multiplication, i.e. $(a_1, a_2, \ldots, a_\ell)\mc{W}(\lambda_0) = (0,0,\ldots,0)$.}, where $\mc{W}(\lambda_0)$ is the $\ell \times \ell$ matrix with entries 
\begin{align*}
\ \ \ \ \ \ \big(\mc{W}(\lambda_0)\big)_{ij} = c_i\big(\xi_i^{2k} - \lambda_0^{2k}\big) \left( \big(\mc{T}^{-1}\big)^t  \,  \mc{Q}(\lambda_0)\right)_{ij} \ \ \ \ \ \ (1\leq i,j \leq \ell).
\end{align*}
\end{itemize}
\end{theorem}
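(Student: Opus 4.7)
The plan is to extract the extremizers directly from the stationarity of the Rayleigh quotient, using the interpolation formulas in $\mc{H}(E)$ to reduce everything to an $\ell\times\ell$ linear system whose determinant matches the one singled out by Theorem \ref{Teoremacco_versao_dB}. By Proposition \ref{Prop24_dB} an extremizer $f$ exists and must be even. Writing $\pm\xi_n$ ($\xi_n>0$) for the zeros of $A$, the identity
\[
\psi_n(z) := \frac{\xi_n A(z)}{z^2-\xi_n^2} = -\frac{\pi}{2B(\xi_n)}\bigl(K(\xi_n,z)+K(-\xi_n,z)\bigr)
\]
combined with \eqref{20210809_11:01} and \eqref{Intro_Def_K} gives, for every even $g\in\mc{H}(E)$, a unique expansion $g(z)=\sum_n b_n\psi_n(z)$ with $\|\psi_n\|^2=\pi c_n/2$ and $\|g\|^2=(\pi/2)\sum|b_n|^2 c_n$. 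When $g\in\mc{X}_k(E)$, interpolating $z^k g\in\mc{H}(E)$ at the zeros of $A$ (and at the odd companion kernels $\tilde\psi_n(z)=zA(z)/(z^2-\xi_n^2)$ when $k$ is odd) produces $\|z^k g\|^2=(\pi/2)\sum\xi_n^{2k}|b_n|^2 c_n$. A direct growth comparison against $|E(x)|^{-2}$ applied to the Laurent-type series $g(z)/A(z)=\sum b_n\xi_n/(z^2-\xi_n^2)\sim\sum_j S_{2j}(b)\,z^{-2j-2}$, with $S_{2j}(b):=\sum_n b_n\xi_n^{2j+1}$, further characterizes $g\in\mc{X}_k(E)$ as the vanishing of the first $\ell=\lfloor k/2\rfloor$ moments, $S_0(b)=S_2(b)=\cdots=S_{2\ell-2}(b)=0$.

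Next I will exploit the Euler--Lagrange condition $\langle z^k f, z^k h\rangle=\lambda_0^{2k}\langle f,h\rangle$ for every $h=\sum b_n\psi_n\in\mc{X}_k(E)$. After the above expansions this reduces to
\[
\sum_{n\geq1}(\xi_n^{2k}-\lambda_0^{2k})\,a_n c_n\,\bar b_n = 0
\]
for every admissible $(b_n)$ subject to the $\ell$ constraints $S_{2m}(b)=0$, $m=0,\ldots,\ell-1$. Duality then forces the sequence $\bigl((\xi_n^{2k}-\lambda_0^{2k})a_n c_n\bigr)_n$ to lie in the span of the constraint functionals $(\xi_n^{2m+1})_n$, so (provided $\lambda_0\neq\xi_n$ for every $n$)
\[
a_n=\frac{\sum_{j=0}^{\ell-1}\tilde\gamma_j\,\xi_n^{2j+1}}{c_n(\xi_n^{2k}-\lambda_0^{2k})}
\]
for suitable scalars $\tilde\gamma_0,\ldots,\tilde\gamma_{\ell-1}$. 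The Vandermonde matrix $\mc{T}_{mj}=\xi_m^{2\ell-2j+1}$ realizes the change of variables between $(\tilde\gamma_j)$ and $(a_1,\ldots,a_\ell)$; inverting $\mc{T}$ makes the formula automatic for $n\leq\ell$ and yields, for $n>\ell$, exactly the expression in part (ii). The case $k=1$ ($\ell=0$) is degenerate: no constraints arise, EL forces $a_n=0$ unless $\xi_n=\lambda_0$, and one recovers $\lambda_0=\xi_1$ and $f\propto\psi_1$, which is part (i); the corner case $\lambda_0=\xi_m$, corresponding to the factor $A(\lambda_0)=0$ in Theorem \ref{Teoremacco_versao_dB}, is handled by requiring the numerator above to vanish at the resonant index.

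To close the system I impose on $f$ itself the $\ell$ moment conditions $S_{2m}(a)=0$. Substitution produces the homogeneous $\ell\times\ell$ system
\[
\sum_{j=0}^{\ell-1}M_{mj}(\lambda_0)\,\tilde\gamma_j=0,\qquad M_{mj}(\lambda):=\sum_n\frac{\xi_n^{2m+2j+2}}{c_n(\xi_n^{2k}-\lambda^{2k})},\qquad m=0,\ldots,\ell-1.
\]
To evaluate $M_{mj}$ in closed form I apply the residue theorem to $F_{mj}(z):=z^{2m+2j+2}B(z)/[A(z)(z^{2k}-\lambda^{2k})]$. Since $m+j\leq 2\ell-2$, on large circles avoiding the zeros of $A$ the Bessel-type asymptotics give $|F_{mj}(z)|\lesssim |z|^{2m+2j+2-2k}\leq|z|^{-2}$, so the residues sum to zero. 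The residues at $\pm\xi_n$ contribute $-2M_{mj}(\lambda)$, and the residues at the $2k$-th roots $\omega^r\lambda$ (with $\omega=e^{i\pi/k}$), collapsed via $C(-z)=-C(z)$ and $\omega^k=-1$, produce
\[
M_{mj}(\lambda)=\frac{\lambda^{2m+2j+3-2k}}{2k}\sum_{r=0}^{k-1}\omega^{r(2m+2j+3-2k)}C(\omega^r\lambda).
\]
Under the reversal $m\mapsto\ell-m'$, $j\mapsto\ell-j'$ the $\omega$-exponent shifts by $-2k$ into $4\ell-2m'-2j'+3$, matching the entries of $\mc{V}_{m'j'}$, while the $\lambda$-prefactor matches that of $\mc{Q}_{m'j'}=\mc{V}_{m'j'}/(2k\lambda^{2k-4\ell+2m'+2j'-3})$; thus $M_{m,j}=\mc{Q}_{\ell-m,\ell-j}$ up to the symmetry of $\mc{V}$ in its indices. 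Combining this identification with the Vandermonde change of variables, the consistency system for $(a_1,\ldots,a_\ell)$ rewrites exactly as $(a_1,\ldots,a_\ell)^t\mc{W}(\lambda_0)=0$, which admits a nontrivial solution if and only if $\det\mc{V}(\lambda_0)=0$, in full agreement with Theorem \ref{Teoremacco_versao_dB}. The hard parts will be the rigorous spectral characterization of $\mc{X}_k(E)$ inside $\mc{H}(E)$ via the moment conditions (including the convergence of the expansions used in the duality argument), and the residue analysis that identifies $M$ with $\mc{Q}$ together with the verification that $\lambda\mapsto A(\lambda)\det\mc{V}(\lambda)$ is continuous on $(0,\xi_{\ell+1})$.
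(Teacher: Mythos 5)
Your overall architecture is the same as the paper's: expand an even extremizer in the interpolation basis $\xi_n A(z)/(z^2-\xi_n^2)$ attached to the zeros of $A$, characterize $\mc{X}_k(E)$ by $\ell$ linear moment constraints on the coefficients, derive the Euler--Lagrange equations, solve for $a_n$ ($n>\ell$) in terms of $a_1,\dots,a_\ell$, and close the system into an $\ell\times\ell$ homogeneous problem whose matrix is identified with $\mc{Q}(\lambda_0)$ up to the Vandermonde change of variables. Two of your reformulations are harmless and essentially equivalent to the paper's: you derive the moment constraints from the decay of $g/A$ at infinity rather than from $g(0)=g''(0)=\cdots=0$ for $g=z^kf$ (the paper's route via derivatives at the origin is the cleaner one to make rigorous, and you would do well to adopt it), and you keep explicit Lagrange multipliers $\tilde\gamma_j$ where the paper first eliminates $a_1,\dots,a_\ell$ via $\mc{T}^{-1}$ and varies the remaining coordinates freely; the duality step is legitimate because the constraint functionals $b\mapsto\sum_n b_n\xi_n^{2m+1}$, $0\le m\le\ell-1$, are continuous on the weighted $\ell^2$ space (this uses $\sum_n 1/(c_n\xi_n^2)<\infty$, which follows from $K(0,\cdot)\in\mc{H}(E)$).

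The genuine gap is in your evaluation of $M_{mj}(\lambda)=\sum_n\xi_n^{2m+2j+2}/\big(c_n(\xi_n^{2k}-\lambda^{2k})\big)$ by the residue theorem. You invoke ``Bessel-type asymptotics'' to control $F_{mj}(z)=z^{2m+2j+2}B(z)/[A(z)(z^{2k}-\lambda^{2k})]$ on large circles, but Theorem \ref{Thmaco_class} is stated for an arbitrary Hermite--Biehler $E$ satisfying (C1)--(C3): there are no Bessel asymptotics, no a priori bound for $C=B/A$ on circles avoiding the zeros of $A$, and $A$ may even have only finitely many zeros, so the contour argument as described does not go through in this generality. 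The fix is already implicit in your own setup: the interpolation formula applied to $K(0,\cdot)$ gives the exact partial-fraction identity $C(z)/(2z)=\sum_n 1/\big(c_n(\xi_n^2-z^2)\big)$ (equation \eqref{20230313_22:07}), and the closed form for $M_{mj}$ then follows by applying the roots-of-unity filter of Lemma \ref{Lem26} termwise to this absolutely convergent series --- no growth estimates needed. Your index bookkeeping ($M_{m,j}=\mc{Q}_{\ell-m,\ell-j}$ and the matching of the $\omega$- and $\lambda$-exponents modulo $\omega^{2k}=1$) is correct, so once the residue step is replaced by this algebraic one, your argument coincides with the paper's. One further small point: the resonances $\lambda_0=\xi_m$ can only occur for $m\le\ell$ (since $\lambda_0<\xi_{\ell+1}$ is established by a test-sequence bound), which is exactly why the factor $A(\lambda)$, standing in for $\prod_{i\le\ell}(\xi_i^{2k}-\lambda^{2k})$, appears in the determinant equation; your one-line remark about ``requiring the numerator to vanish at the resonant index'' should be expanded along these lines.
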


\noindent{\sc Remark:} If the even function $A$ has only finitely many zeros, say $2N$, the sum in \eqref{20230317_17:54} naturally goes only from $1$ to $N$. Throughout the proof below we slightly abuse the notation and keep writing the sums from $1$ to $\infty$, with this understanding. In addition, note that $\det \mc{W}(\lambda_0) = 0$ since $ A(\lambda_0) \det \mc{V}(\lambda_0) = 0$ and $\lambda_0 \in (0,\xi_{\ell+1})$; this will be further detailed in our proof. 

\smallskip

In the case $k=1$, Theorems \ref{Teoremacco_versao_dB} and \ref{Thmaco_class} have already been established in a previous work of the first author with Chirre and Milinovich \cite[Theorem 14]{CarChiMil}, with applications to number theory, namely, bounding the height of the first non-trivial zero in families of $L$-functions. We include this case here for completeness (and also since our argument below is slightly different from that of \cite{CarChiMil}).

\subsection{Proof of Theorem \ref{Teoremacco_versao_dB_Intro} and characterization of its extremizers} Note that Theorem \ref{Teoremacco_versao_dB_Intro} is a specialization of Theorem \ref{Teoremacco_versao_dB} in the case where $E(z) = E_{\beta}(z) = A_{\beta}(z) - iB_{\beta}(z)$ discussed in \S \ref{hom_dB_subsec}. If $\alpha = \beta +k$, note that $f \in  \mc{H}_{\alpha}(1; 1)$ if and only if $f \in \mc{X}_k(E_{\beta})$, and from Lemma \ref{Sec5_rel_facts} we plainly have 
$$(\mathbb{EP}1)(\beta+k, \beta\,; 1\,; 1) = (\mathbb{EP}4)(E_{\beta}; k).$$
The extremizers in Theorem \ref{Teoremacco_versao_dB_Intro} can be directly obtained from Theorem \ref{Thmaco_class} with $E = E_{\beta}$, and hence $\xi_n = \frak{j}_{\beta,n}$. In this case, note the additional simplification, due to \eqref{20221130_14:53}, that $c_n = 1$ for all $n\geq 1$.
\subsection{Preliminaries for the proofs of Theorems \ref{Teoremacco_versao_dB} and \ref{Thmaco_class}} Before diving into the proofs, we present a few useful remarks. 

\smallskip

We have seen in Proposition \ref{Prop24_dB} that any extremizer for $(\mathbb{EP}4)(E;k)$ must be an even function. If $f$ is an extremizer and we write $f(z) = g(z) - ih(z)$, where $g$ and $h$ are real entire functions (i.e. $g = (f + f^*)/2$ and $h = i(f - f^*)/2$), one has $|f(x)|^2 = |g(x)|^2 + |h(x)|^2$ for $x \in \R$ and, similarly to \eqref{20221117_17:03}, one readily sees that $g$ and $h$ must also be extremizers (if they are not identically zero). We shall see in our proof below that the set of real entire extremizers is a finite dimensional vector space over $\R$. Hence, the full space of extremizers is the span over $\C$ of the real entire extremizers.

\smallskip

Recall from \eqref{20210913_13:57} and \eqref{Intro_Def_K} that, for $A(\xi) = 0$, we have 
\begin{equation}\label{20230313_13:19_1}
K(\xi, z) = \frac{-A(z)B(\xi)}{\pi(z-\xi)} \ \ \ {\rm  and} \ \ \  K(\xi, \xi) =\frac{- A'(\xi)B(\xi)}{\pi}.
\end{equation}
If $f \in \mc{H}(E)$ is an even function, using \eqref{20230313_13:19_1}, we can group the zeros $+\xi$ and $-\xi$ in the interpolation formula \eqref{20210809_11:01} to get the representation
\begin{equation}\label{20230210_16:00_1}
f(z) = \sum_{n=1}^{\infty} \frac{2 \,\xi_n f(\xi_n)}{A'(\xi_n)} \frac{A(z)}{(z^2 - \xi_n^2)}.
\end{equation}
If $f \in \mc{H}(E)$ is odd, grouping the zeros $+\xi$ and $-\xi$ in the interpolation formula \eqref{20210809_11:01} we get
\begin{equation}\label{20230214_15:14_1}
f(z) = \sum_{n=1}^{\infty} \frac{2 f(\xi_n)}{A'(\xi_n)} \frac{z\, A(z)}{(z^2 - \xi_n^2)}. 
\end{equation}
Both representations \eqref{20230210_16:00_1} and \eqref{20230214_15:14_1} are uniformly convergent in compact subsets of $\C$.

\smallskip

If $f \in \mc{X}_k(E)$, then $g(z) := z^k f(z) \in \mc{H}(E)$ and we have the following constraints
\begin{equation}\label{20230210_16:04_1}
g(0) = g'(0) = \ldots = g^{(k-1)}(0) = 0.
\end{equation}
Reciprocally, if $g \in \mc{H}(E)$ is a function satisfying \eqref{20230210_16:04_1}, then $g(z) = z^k f(z)$ with $f \in \mc{X}_k(E)$.

\subsection{Proofs of Theorems \ref{Teoremacco_versao_dB} and \ref{Thmaco_class}: the case $k$ even} \label{SubSec_k_even_1}

\subsubsection{Sequential formulation} Let $k = 2\ell$ with $\ell \in \N$. Let $f \in \mc{X}_k(E)$ be an even and real entire function. In this case, since $g(z):=z^kf(z)$ is even, half of the constraints in \eqref{20230210_16:04_1} are already taken care of and we only have to worry about
\begin{equation}\label{20230214_15:17_1}
g(0) = g^{(2)}(0) = \ldots = g^{(2(\ell-1))}(0) = 0.
\end{equation}
From the representation \eqref{20230210_16:00_1} (for the function $g$) we have
\begin{align}\label{20230214_15:16_1}
g(z) = \sum_{n=1}^{\infty} \frac{2 \,\xi_n^{k+1} f(\xi_n)}{A'(\xi_n)} \frac{A(z)}{(z^2 - \xi_n^2)}\,,
\end{align}
and then
\begin{align}\label{20230214_15:18_1}
g(0) = 0 \iff \sum_{n=1}^{\infty} \frac{ \xi_n^{k-1} f(\xi_n)}{A'(\xi_n)} = 0.
\end{align}
Proceeding inductively by differentiating \eqref{20230214_15:16_1} and plugging in $z=0$ (the initial case being \eqref{20230214_15:18_1}), we come to the conclusion that \eqref{20230214_15:17_1} is equivalent to the set of identities\footnote{Note that in the rest of the proof we sometimes write our exponent with $2\ell$ instead of $k$. This is simply to facilitate the analogy for the upcoming proof when $k$ is odd in \S \ref{SubSec_k_odd_1}.}
\begin{align}\label{20230214_15:51_1}
\sum_{n=1}^{\infty} \frac{ \xi_n^{2\ell-2j +1} f(\xi_n)}{A'(\xi_n)} = 0 \ \ \ \ \ \ (j = 1, 2, \ldots ,\ell).
\end{align}

\smallskip

Let us set $a_n := f(\xi_{n})/A'(\xi_{n}).$
Note that $a_n$ is a real number since we are assuming that $f$ is real entire. Recall the definition of the positive quantity $c_n$ in \eqref{20230317_18:08}. From \eqref{20230313_13:19_1} and \eqref{20210809_11:01} we get 
\begin{align*}
\|f\|_{\mc{H}(E)}^2 = 2\pi \sum_{n=1}^{\infty} c_n a_n^2  \ \ \ \ \ {\rm and} \ \ \ \ \ \|z^k f(z)\|_{\mc{H}(E)}^2 = 2\pi \sum_{n=1}^{\infty} c_n a_n^2 \, {\xi}_{n}^{2k}.
\end{align*}
Letting $\lambda_0:=\big((\mathbb{EP}4)(E; k)\big)^{1/2k} $, an equivalent formulation of our problem is then 
$$\lambda_0^{2k}  = \inf_{\{a_n\} \in \A} \frac{\sum_{n=1}^{\infty} c_n a_n^2 \, {\xi}_{n}^{2k}}{\sum_{n=1}^{\infty} c_na_n^2 }\,,$$
where the infimum is taken over the class $\A$ of real-valued sequences $\{a_n\}_{n \geq 1}$, non-identically zero, that verify:  
\begin{itemize}
\item $\sum_{n=1}^{\infty} c_n a_n^2 \, \xi_n^{2k} < \infty$; 
\item and the $\ell$ constraints given by \eqref{20230214_15:51_1}, i.e. 
\begin{align}\label{20230214_16:22_1}
\sum_{n=1}^{\infty} a_n\, {\xi}_{n}^{2\ell - 2j +1} = 0 \ \ \ \ \ \ (j = 1, 2, \ldots, \ell).
\end{align}
\end{itemize}
\noindent {\sc Remark:} The function $K(0, z) = A(0)B(z)/(\pi z)$ belongs to $\mc{H}(E)$, and by \eqref{20210809_11:01} we have 
$$\sum_{n=1}^{\infty} \frac{1}{c_n\, \xi_n^2} < \infty.$$ From this and the condition $\sum_{n=1}^{\infty} c_n a_n^2 \, \xi_n^{2k} < \infty$, we have an alternative way to see that \eqref{20230214_16:22_1} is absolutely summable by applying Cauchy-Schwarz appropriately.

\subsubsection{A basic upper bound} \label{7.3.2_BUB_1} Recall the definition of the $\ell \times \ell$ Vandermonde matrix $\mc{T}$ in \eqref{20230317_18:13}. The system of equations \eqref{20230214_16:22_1} essentially says that we can express  the variables $a_1, a_2, \ldots, a_{\ell}$ in terms of the variables $\{a_n\}_{n > \ell}$. In fact, letting 
\begin{align}\label{20230214_16:54_1}
S_j := \sum_{n >\ell} a_n \, {\xi}_{n}^{2\ell - 2j +1} \ \ \ \ (j = 1,2, \ldots, \ell)\,,
\end{align}
from \eqref{20230214_16:22_1} we have 
\begin{align}\label{20230214_17:47_1}
(a_1, \ldots, a_{\ell})\,  \mc{T} = (-S_1, -S_2,\ldots, -S_{\ell})\,,
\end{align}
and hence
\begin{align}\label{20230214_16:42_1}
a_i = - \sum_{r=1}^{\ell} S_r\, (\mc{T}^{-1})_{ri} \ \ \ \ \ (i =1, 2, \ldots, \ell).
\end{align}

\smallskip

An easy choice of $\{a_n\}_{n > \ell}$ is $a_{\ell+1} = 1$ and $a_n = 0$ for $n \geq \ell +2$. Then $\{a_i\}_{i=1}^{\ell}$ are given by \eqref{20230214_16:42_1} (they are not all zero since $\mc{T}$ is invertible) and with this test sequence we find that 
$$\lambda_0^{2k} \leq \frac{\sum_{n=1}^{\ell+1} c_na_n^2 \, {\xi}_{n}^{2k}}{\sum_{n=1}^{\ell+1} c_na_n^2 } <{\xi}_{\ell +1}^{2k},$$
and hence 
$$\lambda_0 < \xi_{\ell +1}.$$

\subsubsection{Solution via variational methods} From \eqref{20230214_16:54_1} and \eqref{20230214_16:42_1}, letting 
\begin{equation*}
F\big(\{a_n\}_{n > \ell}\big) = \sum_{i=1}^{\ell}c_i\left(\sum_{r=1}^{\ell} \left(\sum_{n >\ell} a_n \, {\xi}_{n}^{2\ell - 2r +1} \right)\, (\mc{T}^{-1})_{ri}\right)^2{\xi}_{i}^{2k} +   \sum_{n> \ell} c_na_n^2 \, {\xi}_{n}^{2k}
\end{equation*}
and
\begin{equation*}
G\big(\{a_n\}_{n > \ell}\big) = \sum_{i=1}^{\ell}c_i\left(\sum_{r=1}^{\ell} \left(\sum_{n >\ell} a_n \, {\xi}_{n}^{2\ell - 2r +1} \right)\, (\mc{T}^{-1})_{ri}\right)^2 +   \sum_{n> \ell} c_na_n^2\,, 
\end{equation*}
our problem is to minimize $F/G$ over all the class $\A_1$ of real-valued sequences $\{a_n\}_{n > \ell}$, non-identically zero, that verify  $\sum_{n>\ell}c_n a_n^2 \, \xi_n^{2k} < \infty$.

\smallskip

From Proposition \ref{Prop24_dB} we know that there exists an extremal sequence, that we henceforth denote by $\{a_n^*\}_{n > \ell}$. This means that 
$$\lambda_0^{2k} = \frac{F\big(\{a_n^*\}_{n > \ell}\big) }{G\big(\{a_n^*\}_{n > \ell}\big) }\,,$$
and if we perturb this sequence by considering $\{a_n^* + \varepsilon b_n\}_{n > \ell}$, where $\{b_n\}_{n > \ell} \in \A_1$ and $\varepsilon$ is small, we get that the function 
$$\varphi(\varepsilon) := \frac{F\big(\{a_n^* + \varepsilon b_n\}_{n > \ell}\big) }{G\big(\{a_n^*+ \varepsilon b_n\}_{n > \ell}\big) }$$
is differentiable and must verify 
$$\varphi'(0) = 0.$$
In particular, with the perturbation $\{b_n\}_{n > \ell}$ given by $b_n = 1$ if $n = \ell + m$ and $b_n = 0$ otherwise (for any $m \in \N$), we arrive at the Lagrange multipliers 
\begin{equation}\label{20230214_17:26_1}
\frac{\partial F}{\partial a_n} \big(\{a_n^*\}_{n > \ell}\big)= \lambda_0^{2k} \, \frac{\partial G}{\partial a_n} \big(\{a_n^*\}_{n > \ell}\big)
\end{equation}
for all $n > \ell$. 

\smallskip

Using \eqref{20230214_16:42_1}, note that 
\begin{align*}
\frac{\partial F}{\partial a_n} \big(\{a_n\}_{n > \ell}\big) = \sum_{i=1}^{\ell}(-2c_ia_i) \left(\sum_{r=1}^{\ell}  {\xi}_{n}^{2\ell - 2r +1} (\mc{T}^{-1})_{ri} \right) {\xi}_{i}^{2k} +  2 c_n a_n \, {\xi}_{n}^{2k}
\end{align*}
and
\begin{align*}
\frac{\partial G}{\partial a_n} \big(\{a_n\}_{n > \ell}\big) = \sum_{i=1}^{\ell}(-2c_ia_i) \left(\sum_{r=1}^{\ell}  {\xi}_{n}^{2\ell - 2r +1} (\mc{T}^{-1})_{ri} \right)+  2 c_na_n.
\end{align*}
Dividing by $2$ we find that \eqref{20230214_17:26_1} is equivalent to 
\begin{align}\label{20230215_11:11_1}
c_na_n^* \big( {\xi}_{n}^{2k} - \lambda_0^{2k}\big) = \sum_{i=1}^{\ell}c_i a_i^* \left(\sum_{r=1}^{\ell}  {\xi}_{n}^{2\ell - 2r +1} (\mc{T}^{-1})_{ri} \right) \big({\xi}_{i}^{2k} - \lambda_0^{2k}\big) 
\end{align}
for all $n >\ell$. Since we have already established in \S \ref{7.3.2_BUB_1} that $\lambda_0 < {\xi}_{\ell +1}$ we find that 
\begin{align}\label{20230214_17:36_1}
a_n^* = \frac{ \sum_{i=1}^{\ell}c_ia_i^* \left(\sum_{r=1}^{\ell}  {\xi}_{n}^{2\ell - 2r +1} (\mc{T}^{-1})_{ri} \right) \big({\xi}_{i}^{2k} - \lambda_0^{2k}\big)}{ c_n\big( {\xi}_{n}^{2k} - \lambda_0^{2k}\big)}
\end{align}
for all $n >\ell$. In particular, note that $(a_1^*, a_2^*, \ldots, a_{\ell}^*) \neq {\bf 0}$.

\smallskip

At this point, if we multiply both sides of \eqref{20230214_17:36_1} by ${\xi}_{n}^{2\ell - 2j +1}$ and add over $n > \ell$, using \eqref{20230214_16:54_1} (call it $S_j^*$ the corresponding quantity for the sequence $\{a_n^*\}_{n > \ell}$) we get 
\begin{align*}
S_j^* =\sum_{i=1}^{\ell}c_ia_i^* \sum_{n> \ell}  \frac{\left(\sum_{r=1}^{\ell}  {\xi}_{n}^{2\ell - 2r +1} (\mc{T}^{-1})_{ri} \right) \big({\xi}_{i}^{2k} - \lambda_0^{2k}\big) \,{\xi}_{n}^{2\ell - 2j +1}}{ c_n\big( {\xi}_{n}^{2k} - \lambda_0^{2k}\big)}.
\end{align*}
Recall from \eqref{20230214_17:47_1} that  
\begin{equation*}
S_j^* = - \sum_{i=1}^{\ell}a_i^* \, \mc{T}_{ij} = - \sum_{i=1}^{\ell}a_i^*\,  \xi_{ i}^{2\ell - 2j +1}.
\end{equation*}
If we subtract these two equations we get 
\begin{align}\label{20230215_10:52_1}
0 = \sum_{i=1}^{\ell} a_i^* \left[ \sum_{n> \ell}  \frac{c_i \left(\sum_{r=1}^{\ell}  {\xi}_{n}^{2\ell - 2r +1} (\mc{T}^{-1})_{ri} \right) \big({\xi}_{i}^{2k} - \lambda_0^{2k}\big) \,{\xi}_{n}^{2\ell - 2j +1}}{ c_n\big( {\xi}_{n}^{2k} - \lambda_0^{2k}\big)} + \xi_{ i}^{2\ell - 2j +1}\right]
\end{align}
for each $j = 1, 2, \ldots, \ell$. 

\smallskip

Denoting by $\mc{W}(\lambda)$ the $\ell \times \ell$ matrix with entries
\begin{equation}\label{20230313_16:01}
\big(\mc{W}(\lambda)\big)_{ij} =  \left(\sum_{n > \ell} \frac{c_i\left(\sum_{r=1}^{\ell} \xi_{ n}^{4\ell - 2r - 2j +2}\, (\mc{T}^{-1})_{ri}\right)}{c_n \big(\xi_{n}^{2k} - \lambda^{2k}\big)}\right)\big(\xi_{i}^{2k} - \lambda^{2k}\big) +\xi_{ i}^{2\ell - 2j +1}\,,
\end{equation}
what we have seen from \eqref{20230215_10:52_1} is that we must have $\det \mc{W}(\lambda_0) = 0$ and the vector $(a_1^*, a_2^*, \ldots, a_{\ell}^*) \in \R^{\ell}\setminus\{{\bf 0}\}$ belongs to the $\ker \mc{W}(\lambda_0)$.

\smallskip

Conversely, if $0< \lambda_0 < {\xi}_{\ell +1}$ is such that $\det \mc{W}(\lambda_0) = 0$, and we let $\{{\bf 0}\} \neq (a_1^*, a_2^*, \ldots, a_{\ell}^*) \in \ker W(\lambda_0)$, we may define $a_n^*$ for $n >\ell$ by \eqref{20230214_17:36_1}. Then the constraints \eqref{20230214_16:22_1} are simply \eqref{20230215_10:52_1}, and one can check directly that $\sum_{n\geq 1}c_n(a_n^*)^2 \, \xi_n^{2k} < \infty$. Working backwards, \eqref{20230215_11:11_1} follows from \eqref{20230214_17:36_1}. Multiplying \eqref{20230215_11:11_1} by $a_n^*$ and summing over $n> \ell$ we get (note the use of \eqref{20230214_16:54_1} and \eqref{20230214_16:42_1} below)
\begin{align*}
\sum_{n > \ell} c_n(a_n^*)^2 \big( {\xi}_{n}^{2k} - \lambda_0^{2k}\big) & = \sum_{n > \ell}  a_n^* \,\sum_{i=1}^{\ell}c_i a_i^* \left(\sum_{r=1}^{\ell}  {\xi}_{n}^{2\ell - 2r +1} (\mc{T}^{-1})_{ri} \right) \big({\xi}_{i}^{2k} - \lambda_0^{2k}\big)\\
& = \sum_{i=1}^{\ell}c_i a_i^* \left(\sum_{r=1}^{\ell}  S_r^* \, (\mc{T}^{-1})_{ri} \right) \big({\xi}_{i}^{2k} - \lambda_0^{2k}\big)\\
&= - \sum_{i=1}^{\ell}c_i(a_i^*)^2  \big({\xi}_{i}^{2k} - \lambda_0^{2k}\big).
\end{align*}
The last expression can be rewritten as
\begin{align*}
 \frac{\sum_{n=1}^{\infty} c_n(a_n^*)^2 \, {\xi}_{n}^{2k}}{\sum_{n=1}^{\infty} c_n(a_n^*)^2 } = \lambda_0^{2k}.
\end{align*}

\smallskip

The conclusion is that the desired value $\lambda_0$ is the smallest positive solution of the equation $\det \mc{W}(\lambda) = 0$ and all the real-valued extremal sequences are given by \eqref{20230214_17:36_1} with $\{{\bf 0}\} \neq (a_1^*, a_2^*, \ldots, a_{\ell}^*) \in \ker \mc{W}(\lambda_0)$. 

\subsubsection{Simplifying the determinant} We now work a bit on the determinant of the matrix $\mc{W}(\lambda)$, given by \eqref{20230313_16:01}, in order to describe it in terms of the companion functions $A$ and $B$ that define the de Branges space $\mc{H}(E)$; this is better suited for computational purposes. Note that $\lambda \mapsto \det \mc{W}(\lambda)$ is a continuous function on the interval $(0, \xi_{\ell +1})$.

\smallskip

Observe first that 
\begin{align}
\big(\mc{W}(\lambda)\big)_{ij} & =c_i\big(\xi_i^{2k} - \lambda^{2k}\big) \left[ \left(\sum_{n > \ell} \frac{\left(\sum_{r=1}^{\ell} \xi_{ n}^{4\ell - 2r - 2j +2}\, (\mc{T}^{-1})_{ri}\right)}{c_n \big(\xi_{n}^{2k} - \lambda^{2k}\big)}\right) + \frac{\xi_{ i}^{2\ell - 2j +1}}{c_i \big(\xi_{i}^{2k} - \lambda^{2k}\big)}\right] \nonumber \\
& = c_i\big(\xi_i^{2k} - \lambda^{2k}\big) \sum_{n =1}^{\infty} \frac{\left(\sum_{r=1}^{\ell} \xi_{ n}^{4\ell - 2r - 2j +2}\, (\mc{T}^{-1})_{ri}\right)}{c_n \big(\xi_{n}^{2k} - \lambda^{2k}\big)} \label{20230317_22:57}\\
& =  c_i\big(\xi_i^{2k} - \lambda^{2k}\big) \left( \big(\mc{T}^{-1}\big)^t  \,  \mc{Q}(\lambda)\right)_{ij}\,,\nonumber
\end{align}
where $\mc{Q}(\lambda)$ is the $\ell \times \ell$ matrix with entries given by 
\begin{align*}
\big(\mc{Q}(\lambda)\big)_{mj} = \sum_{n=1}^{\infty} \frac{\xi_n^{4\ell - 2m - 2j +2}}{c_n\big(\xi_n^{2k} - \lambda^{2k}\big)} \ \ \ \ \ \ (1\leq m,j \leq \ell).
\end{align*}
Note that the second equality in \eqref{20230317_22:57} is a consequence of the identity $\sum_{r=1}^{\ell} \xi_{n}^{2\ell - 2r +1}(\mc{T}^{-1})_{ri} = \delta_{n i}$, for $1 \leq n \leq \ell$. From \eqref{20230317_22:57} we arrive at 
\begin{equation}\label{20230317_23:33}
\det \mc{W}(\lambda) = \left(\prod_{i=1}^\ell c_i\big(\xi_i^{2k} - \lambda^{2k}\big)\right) \big(\det \mc{T} \big)^{-1}\det \mc{Q}(\lambda)\,,
\end{equation}
and our task now is to understand this last determinant. The next lemma is helpful to rewrite each entry $\big(\mc{Q}(\lambda)\big)_{mj}$ in terms of the companion functions $A$ and $B$.

\begin{lemma}\label{Lem26}
Let $k \in \N$ and set $\zeta := e^{2\pi i/k}$. For $s \in \Z$ with $0 \leq s \leq k-1$ we have
\begin{align*}
\sum_{r=0}^{k-1} \frac{\zeta^{-rs}}{x - \zeta^r y} =  \frac{k \,x^{k - s -1}\,y^s}{x^k - y^k}\,,
\end{align*}
as rational functions of the variables $x$ and $y$. 
\end{lemma}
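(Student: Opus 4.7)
The plan is a direct partial-fractions verification. First I will use the factorization
$$x^k - y^k = \prod_{r=0}^{k-1}(x - \zeta^r y),$$
which holds since $\zeta^r y$, for $r = 0, 1, \ldots, k-1$, runs through all $k$-th roots of $y^k$. In particular, viewed as rational functions in $x$ (with $y$ a parameter), both sides of the claimed identity have the same set of possible simple poles, located at $x = \zeta^r y$.

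Second, I will compute the residue of the right-hand side at each pole. Since $\tfrac{d}{dx}(x^k - y^k)\big|_{x = \zeta^r y} = k(\zeta^r y)^{k-1}$, the residue of the right-hand side at $x = \zeta^r y$ is
$$\frac{k\,(\zeta^r y)^{k-s-1}\, y^s}{k\,(\zeta^r y)^{k-1}} \;=\; \zeta^{r(k-s-1) - r(k-1)} \;=\; \zeta^{-rs},$$
which matches the coefficient in the $r$-th summand of the left-hand side. Hence the two sides have identical principal parts at every pole.

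Third, I will compare behaviour at infinity. The left-hand side is clearly $O(x^{-1})$ as $|x| \to \infty$, while the hypothesis $0 \leq s \leq k-1$ makes the right-hand side $\tfrac{k\,x^{k-s-1}y^s}{x^k - y^k} = O(x^{-s-1}) = O(x^{-1})$ as well. Therefore the difference of the two sides is a rational function in $x$ with no poles and vanishing at infinity, so it is identically zero. This proves the identity as rational functions in $x$ and $y$.

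There is no real obstacle in this argument; the only point to watch is bookkeeping the exponent arithmetic in the residue computation and ensuring that the range $0 \leq s \leq k-1$ is used precisely to guarantee that the right-hand side decays at infinity (without this constraint one would need to subtract a polynomial part).
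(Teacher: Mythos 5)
Your argument is correct: the residues check out (the exponent arithmetic $\zeta^{r(k-s-1)-r(k-1)} = \zeta^{-rs}$ is right, and the powers of $y$ cancel), and since for generic $y \neq 0$ the points $\zeta^r y$ are $k$ distinct simple poles, matching principal parts plus vanishing at infinity forces the difference to be zero; this suffices for an identity of rational functions. The paper proves the lemma by a different elementary route: it expands each summand as a geometric series, $\frac{1}{x-\zeta^r y} = \frac1x \sum_{m\ge 0} \zeta^{rm}(y/x)^m$, interchanges the sums, and invokes the orthogonality relation $\sum_{r=0}^{k-1}\zeta^{r(m-s)} = k$ if $m \equiv s \pmod k$ and $0$ otherwise, which collapses the double sum to $\frac{k}{x}\cdot\frac{(y/x)^s}{1-(y/x)^k}$. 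The paper's computation is constructive (it produces the right-hand side rather than verifying it) and makes transparent why the answer singles out the residue class $m \equiv s$; your residue argument is shorter on the verification side and isolates more cleanly where the hypothesis $0 \le s \le k-1$ enters. One small refinement worth recording: the lower bound $s \ge 0$ gives the decay at infinity as you say, while the upper bound $s \le k-1$ is what makes $x^{k-s-1}$ a genuine monomial, so that the right-hand side acquires no spurious pole at $x=0$; your phrase ``no poles'' tacitly uses both.
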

\begin{proof}
We argue briefly via series expansions. Observe that 
\begin{align*}
\sum_{r=0}^{k-1} \frac{\zeta^{-rs}}{x - \zeta^r y} & = \frac{1}{x} \sum_{r=0}^{k-1} \frac{\zeta^{-rs}}{ 1- \zeta^r \left(\tfrac{y}{x}\right)} = \frac{1}{x} \sum_{r=0}^{k-1} \zeta^{-rs} \sum_{m=0}^{\infty} \zeta^{rm}\left(\tfrac{y}{x}\right)^m = \frac{1}{x} \sum_{m=0}^{\infty}\left(\tfrac{y}{x}\right)^m\sum_{r=0}^{k-1} \zeta^{r(m-s)}\\
& = \frac{k}{x} \sum_{m\geq 0;\, m \equiv s \,({\rm mod}\, k )} \left(\tfrac{y}{x}\right)^m = \frac{k}{x} \frac{y^s}{x^s} \left(\frac{1}{1 - \left(\tfrac{y}{x}\right)^k}\right) = \frac{k \,x^{k - s -1}\,y^s}{x^k - y^k}.
\end{align*}
\end{proof} 
Recall that the even function $\pi K(0, z) = A(0)B(z)/z$ belongs to $\mc{H}(E)$. From \eqref{20230210_16:00_1} we get
\begin{align*}
\frac{B(z)}{2z} = \sum_{n=1}^{\infty} \frac{A(z)}{c_n (\xi_n^2 - z^2)}.
\end{align*}
Then, with $C(z) = B(z)/A(z)$, we have 
\begin{align}\label{20230313_22:07}
\frac{C(z)}{2z} = \sum_{n=1}^{\infty} \frac{1}{c_n (\xi_n^2 - z^2)}.
\end{align}
With $\zeta = e^{2\pi i/k}$, by Lemma \ref{Lem26} with $x = \xi_n^2$ and $y = \lambda^2$, and \eqref{20230313_22:07}, we have
\begin{align}
\big(\mc{Q}(\lambda)\big)_{mj} & = \sum_{n=1}^{\infty} \frac{\xi_n^{4\ell - 2m - 2j +2}}{c_n\big(\xi_n^{2k} - \lambda^{2k}\big)} \nonumber \\
& = \sum_{n=1}^{\infty} \frac{1}{k\,  \lambda^{2k - 4\ell + 2m +  2j -4}}\frac{k \, \xi_n^{4\ell - 2m - 2j +2}\, \lambda^{2k - 4\ell + 2m +  2j -4}}{c_n\big(\xi_n^{2k} - \lambda^{2k}\big)} \nonumber \\
& = \sum_{n=1}^{\infty} \frac{1}{k\,  \lambda^{2k - 4\ell + 2m +  2j -4}}\sum_{r=0}^{k-1} \frac{\zeta^{-r (k - 2\ell + m + j -2)}}{c_n\big(\xi_n^2 - \zeta^r \lambda^2\big)} \nonumber \\
& = \sum_{r=0}^{k-1}\frac{\zeta^{-r (k - 2\ell + m + j -2)}}{k\,  \lambda^{2k - 4\ell + 2m +  2j -4}}\sum_{n=1}^{\infty}\frac{1}{c_n\big(\xi_n^2 - \zeta^r \lambda^2\big)} \nonumber \\
& = \sum_{r=0}^{k-1}\frac{\zeta^{-r (k - 2\ell + m + j -2)}}{k\,  \lambda^{2k - 4\ell + 2m +  2j -4}} \frac{C\big(\zeta^{r/2} \lambda\big)}{2 \,\zeta^{r/2} \lambda} \nonumber\\
& = \frac{\big(\mc{V}(\lambda)\big)_{mj} }{(2k)\,\lambda^{2k - 4\ell + 2m +  2j -3}} \,,\label{20230313_22:16}
\end{align}
where $\mc{V}(\lambda)$ is the $\ell \times \ell$ matrix with entries 
\begin{align*}
\big(\mc{V}(\lambda)\big)_{mj} = \sum_{r=0}^{k-1} \omega^{r (4\ell - 2m - 2j +3)} C\big(\omega^{r} \lambda\big) \ \ \ \ \ \ (1\leq m,j \leq \ell)\,,
\end{align*}
and $\omega := e^{\pi i/k}$. Recall that we are only interested in the roots of our original determinant in the open interval $(0,\xi_{\ell +1})$. Since the power of $\lambda$ in the denominator of $\big(\mc{Q}(\lambda)\big)_{mj}$ in \eqref{20230313_22:16} depends only on $m+j$, when the determinant is multiplied out, there will be a common power of $\lambda$ that can be factored out. Also, the expression $\prod_{i=1}^\ell c_i\big(\xi_i^{2k} - \lambda^{2k}\big)$ appearing in \eqref{20230317_23:33} can be replaced by $A(\lambda)$ (that has the same zeros in the interval $(0,\xi_{\ell +1})$). We conclude that, in the range  $(0,\xi_{\ell +1})$, one has $\det \mc{W}(\lambda) = 0$ if and only if $A(\lambda) \det \mc{V}(\lambda) = 0$. This completes the proof in this case.

\subsection{Proof of Theorem \ref{Teoremacco_versao_dB}: the case $k$ odd} \label{SubSec_k_odd_1} Now let $k = 2\ell +1$ with $\ell \in \N \cup \{0\}$. Let $f \in \mc{X}_k(E)$ be an even and real entire function. In this case, note that $g(z):=z^kf(z)$ is odd.

\smallskip

If $k=1$, there are no non-trivial constraints in \eqref{20230210_16:04_1}, and we plainly get 
$$\lambda_0^{2}  = \inf_{\{a_n\} \in \A} \frac{\sum_{n=1}^{\infty} c_n a_n^2 \, {\xi}_{n}^{2}}{\sum_{n=1}^{\infty} c_na_n^2 } = \xi_1^2,$$
with the only extremal sequences being the ones with $a_1 \neq 0$ and $a_n = 0$ for $n \geq 2$. 

\smallskip

If $k \geq 3$ (i.e. $\ell \geq 1$), then more than half of the constraints in \eqref{20230210_16:04_1} are already taken care of and we only have to worry about
\begin{equation}\label{20230215_13:18_1}
g'(0) = g^{(3)}(0) = \ldots = g^{(2\ell-1)}(0) = 0.
\end{equation}
From the representation \eqref{20230214_15:14_1} (for the function $g$) we have
\begin{align}\label{20230215_13:19_1}
g(z) = \sum_{n=1}^{\infty} \frac{2 \, \xi_n^k  f(\xi_n)}{A'(\xi_n)} \frac{z\, A(z)}{(z^2 - \xi_n^2)}\,,
\end{align}
and then
\begin{align}\label{20230215_13:20_1}
g'(0) = 0 \iff \sum_{n=1}^{\infty} \frac{ \xi_n^{k-2} f(\xi_n)}{A'(\xi_n)} = 0.
\end{align}
Proceeding inductively by differentiating \eqref{20230215_13:19_1} and plugging in $z=0$ (the initial case being \eqref{20230215_13:20_1}), we come to the conclusion that \eqref{20230215_13:18_1} is equivalent to the set of identities
\begin{align*}
\sum_{n=1}^{\infty} \frac{ \xi_n^{2\ell-2j +1} f(\xi_n)}{A'(\xi_n)} = 0 \ \ \ \ \ (j = 1, 2, \ldots ,\ell)\,,
\end{align*}
just like we had in \eqref{20230214_15:51_1}. The rest of the proof is completely identical to what was done in \S \ref{SubSec_k_even_1} (and that is the reason we kept track of the variables $k$ and $\ell$ independently in the previous argument, so that it could be applied line by line to this case as well).

\section*{Acknowledgements}

We are thankful to Andrés Chirre, Dimitar Dimitrov, Friedrich Littmann, Jeffrey Vaaler and Don Zagier for helpful discussions on the themes related to this paper. C. G-R. was partially supported by the Spanish Ministry of Science, Innovation and Universities, grant PID2023-150984NB-I00, and by the Centro de An\'{a}lise Matem\'{a}tica, Geometria e Sistemas Din\^{a}micos (CAMGSD). C. G-R. was also supported by the Spanish State Research Agency, through the Severo Ochoa and Mar\'{i}a de Maeztu Program for Centers and Units of Excellence in R\&D (CEX2020-001084-M) and acknowledges CERCA Programme/Generalitat de Catalunya for institutional support. L.O. acknowledges support of the Office of Naval Research through grant GRANT14201749 (award number N629092412126). A.O. and M.S. were supported by PID2023-146646NB-I00 funded by MICIU/AEI/10.13039/501100011033 and by ESF+, the Basque Government through the BERC 2022-2025 program, and through BCAM Severo Ochoa accreditation CEX2021-001142-S / MICIN / AEI / 10.13039/501100011033. M.S. also acknowledges the support of the grant RYC2022-038226-I. L.O., S.O. and M.S. thank ICTP for the hospitality during part of the preparation of this work.

\end{document}